\documentclass[12pt, oneside,reqno]{article}
\usepackage{graphicx}
\usepackage{amsfonts }
\usepackage{amsmath}
\usepackage{mathabx}
\usepackage{fullpage}
\usepackage{amssymb}
\usepackage{amsthm}
\usepackage{tikz}
\usepackage{mathtools}
\usepackage{lmodern}
\usepackage{paralist}
\usepackage[nodisplayskipstretch]{setspace}
\setstretch{1}

\setlength{\oddsidemargin}{0in}
\setlength{\evensidemargin}{0in}
\setlength{\marginparwidth}{0in}
\setlength{\marginparsep}{0in}
\setlength{\marginparpush}{0in}
\setlength{\topmargin}{0.2in}
\setlength{\headsep}{14pt}
\setlength{\footskip}{.3in}
\setlength{\textheight}{8.0in}
\setlength{\textwidth}{6.5in}
\setlength{\parskip}{4pt}

\newtheorem{thm}{Theorem}[section]
\newtheorem{conj}[thm]{Conjecture}
\newtheorem{lemma}[thm]{Lemma}

\newtheorem{defn}[thm]{Definition}
\newtheorem{cor}[thm]{Corollary}
\newtheorem{ques}[thm]{Question}

\theoremstyle{definition}
\newtheorem{rem}[thm]{Remark}
\newtheorem{ex}[thm]{Example}

\newcommand{\B}{{\cal B}}
\newcommand{\C}{{\cal C}}
\newcommand{\RR}{{\cal R}}

\newcommand{\rk}{\text{rk}}

\newcommand{\link}{\text{Link}}

\newcommand{\sk}{\text{Skel}}
\newcommand{\J}{{\cal J}}
\newcommand{\nbc}{\text{nbc}}
\newcommand{\Gale}{\text{Gale}}
\newcommand{\Int}{\text{Int}}
\newcommand{\A}{{\cal A}}

%\subjclass[2010]{05Cxx, 05E45, 13F55}

\begin{document}
\title{Quasi-matroidal classes of ordered simplicial complexes}
\author {Jos\'e Alejandro Samper\thanks{J.~A.~Samper thanks Isabella Novik for the research assistant positions funded through NSF Grant DMS-1361423.}\\
\small Department of Mathematics\\[-0.8ex]
\small University of Washington\\[-0.8ex]
\small Seattle, WA 98195-4350, USA\\[-0.8ex]
\small \texttt{samper@math.washington.edu}
}
\date{\today}

\maketitle

\begin{abstract}
We introduce the notion of a quasi-matroidal class of ordered simplicial complexes: an approximation to the idea of a matroid cryptomorphism in the landscape of ordered simplicial complexes. A quasi-matroidal class contains pure shifted simplicial complexes and ordered matroid independence complexes. The essential property is that if a fixed simplicial complex belongs to this class for \emph{every} ordering of its vertex set, then it is a matroid independence complex. Some examples of such classes appear implicitly in the matroid theory literature. We introduce various such classes that highlight different apsects of matroid theory and its similarities with the theory of shifted simplicial complexes. For example, we lift the study of objects like the Tutte polynomial and nbc complexes to a quasi-matroidal class that allows us to define such objects for shifted complexes. Furthermore, some of the quasi-matroidal classes are amenable to inductive techniques that can't be applied directly in the context of matroid theory. As an example, we provide a suitable setting to reformulate and extend conjecture of Stanley about $h$-vectors of matroids which is expected to be tractable with techniques that are out of reach for matroids alone. This new conjecture holds for pure shifted simplicial complexes and matroids of rank up to 4. 
\end{abstract}
\section{Introduction}

The term cryptomorphism is an informal mathematical notion that was invented by Birkhoff \cite{MR0227053} in order to capture the phenomenon that a class of objects can be described in several different ways that are not trivially equivalent. Matroids, as an abstract apparatus to study the notion of independence in mathematics, can be defined by a wide variety of axioms that are equivalent, yet have various distinct flavors. Classical matroid cryptomorphisms include, among others, the independence, circuit, basis exchange, submodularity, flat exchange, and closure axioms. Each such set of axioms provides a natural way to study matroids. Furthermore, there are many theorems in matroid theory that seem to be deeply connected to specific axioms: they are quite easy to prove from one point of view and quite hard from another one. For an introduction to the theory of matroids and many existing cryptomorphisms the reader is referred to the books of Oxley \cite{MR1207587}, Welsh \cite{MR0427112} and the book chapters by Bj\"orner \cite{MR1165544} and Ardila \cite{MR3409342}.

Various other cryptomorphisms of matroids have appeared over the years and have turned out to also be useful for many other purposes. Interesting examples come from the theory of simplicial complexes via purity of induced subcomplexes, commutative algebra via the Cohen-Macualayness of the Stanley-Reisner ring of the independence complex and all of its induced subcomplexes (see \cite{MR1453579}),  the theory of polytopes via the matroid basis polytope (see \cite{MR877789}) and optimization via the greedy algorithms working for varying weights (see \cite{MR1165544}). 

Many theorems about matroids appear to have an axiom or a natural set of axioms attached to them in the sense that those axioms play the key role in proving the desired property. For example, the fact that matroids are shellable follows naturally from the exchange axiom, and the theory of internal activities follows from the shellability property. On the other hand, the pure subcomplexes cryptomorphism seems to be a natural consequence of the independence axiom. Also, the behavior of nbc complexes and external activity theories appear to be governed by the circuit axiom. Following this heuristic line of thought, the behavior of the Tutte polynomial would have to be captured by the exchange axiom and the circuit axiom, as it has a natural interpretation in terms of internal and external activities. 

Two particularly interesting cryptomorphisms come from the theory of ordered matroids. In particular, Bj\"orner \cite{MR1165544} proved that a simplicial complex is the independence complex of a matroid if and only if, for every ordering of the vertex set, the induced lexicographic order on the facets is a shelling order. Another outstanding characterization, due to Gale \cite{MR0227039}, is the minimality property in the coordinatewise order, now called the Gale ordering. A family of $d$-element subsets of a fixed set $E$ is the set of bases of a matroid if and only if for every order of $E$ the minimal lexicographic facet is componentwise minimal, that is, if $b_1< \dots < b_r$ are the elements of the smallest basis in the lexicographic order and $b_1'< \dots < b_r'$ are the elements of any other facet, then $b_i \le b_i'$ for all $i$. 

The reason for the last two characterizations to be of a particular interest is the following: they both give a property of ordered simplicial complexes that has to be satisfied for \emph{all} possible orderings of the groundset. It is standard in matroid theory, just as in linear algebra when one has a collection of vectors, to endow the groundset of the independence complex with a total order. For instance, the widely studied nbc complex (see for example \cite{MR468931}) of a matroid is an object that can only be constructed once an order for the groundset of the matroid is fixed. In fact, different orders of the groundset may give many non-isomorphic nbc complexes. Another example comes from the theory of the Tutte polynomial (see for \cite{MR0061366, MR0262095}), a bivariate polynomial with integer coefficients that can be associated to every matroid. The Tutte polynomial encodes all invariants of matroids that satisfy a linear deletion-contraction recurrence. It is known that the coefficients of the Tutte polynomial are non-negative integers, but a combinatorial interpretation of the coefficients of the polynomial is only known once an order for the groundset is fixed. 

Additional motivation to study orderings of the groundset more carefully comes from the theory of shifted complexes. They form a remarkable class of simplicial complexes that became popular because of their simple, yet elegant and useful structure. Shifted complexes appear in the proof of the Kruskal-Katona theorem on face enumeration of simplicial complexes (see \cite{MR0154827,MR0290982}) and in Kalai's algebraic shifting theory \cite{MR1890098} which does the same enumeration while keeping track of more refined invariants of the original simplicial complex. The definition of shifted simplicial complexes, i.e., ordered complexes in which big vertices can be replaced by small vertices without leaving the complex, appears to have the same flavor to that of matroid theory via the exchange axiom. However, the two classes of complexes are quite different: the former relies on a specific order of the groundset and contains many complexes that are not matroids, while most matroids on a fixed groundset are not shifted for any choice of ordering. 

The similarities between the two classes are, however, quite striking. For example, assuming purity in the shifted class, both classes admit quite natural shelling orders (once matroids are ordered) and the combinatorial invariants read from both shelling orders behave quite similarly. Furthermore, both classes admit a very flexible theory of restrictions and contractions, both are closed under a certain type of duality, and in both cases the corresponding Gale orderings have a minimum. In addition, the intersection of both classes of complexes is remarkable: ordered complexes that are simultaneously shifted and matroid independence complexes are sometimes called Schubert matroids; they correspond to the matroids associated to generic points in Schubert strata of (framed) Grassmannian manifolds. 

An even more remarkable and mysterious similarity comes from the theory of combinatorial Laplacians as introduced by Eckmann \cite{MR0013318} and Friedman \cite{MR1622290}. For a simplicial complex $\Delta$, let $(C_\bullet(\Delta), \partial)$ be the simplicial chain complex of $\Delta$ over $\mathbb{R}$ and let $(C_\bullet(\Delta), \delta)$ be the dual complex obtained by using the natural face basis in each degree of the chain complex. For every integer $k$, the operator $D_k:=\delta \partial + \partial \delta$, called the \emph{Laplacian} of $\Delta$, is a self-adjoint operator on $C_k(\Delta)$ that has non-negative real eigenvalues. It is then desirable to relate the spectral theory of $D_k$ to the combinatorial structure of $\Delta$ just as in spectral graph theory: graphs can be viewed as one-dimensional simplicial complexes and the classical spectral theory is a special case of this one. 

It was shown in a series of papers (\cite{MR1697094, MR1844702, MR1926878, MR2087892, MR2218401}) that the eigenvalues of the Laplacians of both matroid independence complexes and shifted simplicial complexes are integer numbers. Furthermore, the eigenvalues can be put into a bivariate generating function, called the spectral polynomial, that satisfies a special kind of recurrence similar to the deletion-contraction recurrence for matroids, except that it has an error correction term coming from relative topology. This is a very rare property: the Laplacian operators of most simplicial complexes on a fixed vertex set do not have integral spectra. This leads naturally to the following question that has been repetedly asked. 

\begin{ques}[\cite{Reiner-slides, Duval-slides, MR1926878}]\label{ques:duv-rei} Is there a class of simplicial complexes that contains matroid independence complexes and shifted simplicial complexes, and explains the integral Laplacian phenomenon? 
\end{ques}

Yet another reason for a more detailed study of ordered complexes comes from the theory of $f$-vectors of matroids.  The $f$-vector $(f_0, \dots, f_d)$ of a rank-$d$ simplicial complex $\Delta$ enumerates faces of each rank, i.e the entry $f_i$ counts the number of independent sets (or faces of the independence complex) of rank $i$. It is natural to ask for a characterization of the possible $f$-vectors of matroids. This question has been answered entirely and quite succesfully for other classes of simplicial complexes: for instance the class of all simplicial complexes \cite{MR0154827,MR0290982}, the class of Cohen-Macaulay simplicial complexes \cite{MR0572989}, and the class of simplicial polytopes \cite{MR635368, MR563925}. The $h$-vector of a matroid is an invertible transformation of the $f$-vector that is sometimes more convenient. Thus an equivalent question is that of classifying the possible $h$-vectors of matroids. The advantage here is that the $h$-vector theory of a matroids has a combinatorial realization provided by the lexicographic shelling order of the bases of the matroid, after fixing one ordering of the groundset (see Bj\"orner \cite{MR1165544} for details).

Even though the family of $h$-vectors of matroids is believed to be quite wild and hopeless to fully classify, there are several restrictions the possible values such a vector can take. An astonishing result of Adiprasito, Huh, and Katz \cite{1511.02888}, that builds on previous work of Huh and Katz \cite{MR2904577, MR2983081}, proves that the $f$-vector of the nbc complex of a matroid is log concave, thus resolving a long standing conjecture due to Heron, Rota and Welsh. This imposes strong restrictions on the family of $f$-vectors of matroids. 

One of the most intriguing questions in matroid theory concerns the $h$-vector of the independence complex of a matroid. Given an ordered rank- $d$ matroid, the lexicographic order on the bases is a shelling of the independence complex and the same holds for nbc bases. It follows that the independence complex and the nbc complex of a matroid are Cohen-Macaulay, thus the corresponding $h$-vectors are $O$-sequences. In other words, there is a family of monomials $\cal O$ closed under divisibility with exactly $h_i$ monomials of degree $i$ for every $i=1, \dots, \, d$. The known general constructions for $\cal O$ are not combinatorial and the stucture of $\cal O$ has little to do with the structure of the matroid. It is easy to find several $O$-sequences that are not $h$-vectors of matroids, thus one might ask if there are other conditions that $h$-vectors of matroids have to satisfy. In 1977 Stanley posited the following conjecture on $h$-vectors of matroids. 

\begin{conj}[\cite{MR0572989}]\label{Stanley} The $h$-vector of a matroid independence complex is a {\bf pure} $O$-sequence. 
\end{conj}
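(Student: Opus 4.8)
The plan is not to argue about matroids directly but to pass to one of the quasi-matroidal classes $\C$ introduced in this paper --- concretely, a class that (i) contains every ordered matroid independence complex and every pure shifted complex, (ii) is closed under the deletion $\Delta\mapsto\Delta\setminus e$ and the contraction $\Delta\mapsto\Delta/e$ of an \emph{arbitrary} vertex $e$, and (iii) on which the $h$-polynomial still satisfies the deletion--contraction recurrence $h(\Delta;x)=h(\Delta\setminus e;x)+x\,h(\Delta/e;x)$ whenever $e$ is not a coloop (cone point). One then proves the analogue of the conjecture for all of $\C$: the $h$-vector of every complex in $\C$ is a pure $O$-sequence. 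Because an ordered matroid independence complex lies in $\C$ and its $h$-vector does not depend on the ordering, this implies Conjecture~\ref{Stanley}.

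The proof is by induction on the number of vertices. The key point is that $\C$ --- unlike the class of matroids --- is robust enough that deletion and contraction by \emph{any} vertex remain inside it, so the induction never leaves the class and one has genuine freedom in which vertex to peel off. In the base cases $\Delta$ is a simplex (so $h=1$, trivially a pure $O$-sequence) or a pure shifted complex; for the latter the $h$-vector can be read off from the generators and one exhibits a pure order ideal of monomials realizing it directly --- a ``staircase'' multicomplex built from the shifted structure. For the inductive step, choose a vertex $e$ that is not a coloop (if none exists $\Delta$ is a simplex, a base case); by (iii), $h(\Delta)=h(\Delta\setminus e)+x\,h(\Delta/e)$, and by induction there are pure order ideals $\OO_{1},\OO_{2}$ of monomials whose degree-$i$ cardinalities are the coefficients of $h(\Delta\setminus e)$ and of $h(\Delta/e)$. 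It remains to manufacture from $\OO_{1}$ and $\OO_{2}$ a single pure order ideal whose degree-$i$ cardinality is the $i$-th coefficient of $h(\Delta)$.

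This last assembly is the main obstacle, and it is genuinely the heart of the difficulty. Even the naive attempt --- place $\OO_{1}$ and $y\,\OO_{2}$ on disjoint variable sets and take their union --- already fails: the union need not be an order ideal, and padding $y\,\OO_{2}$ up to the top degree of $\OO_{1}$ reintroduces spurious low-degree monomials that spoil the count. More fundamentally, it is not even formally clear that the sum of a pure $O$-sequence with $x$ times a pure $O$-sequence of one lower socle degree is again a pure $O$-sequence, so a purely combinatorial ``addition of multicomplexes'' cannot be the whole story. The realistic route is to strengthen the inductive hypothesis: instead of merely asserting that a pure order ideal exists, one carries along enough of its shape --- for instance that it embeds compatibly into a fixed ambient staircase determined by the $h$-vector, or that $e$ can be chosen (by an exchange-type argument inside $\C$) so that the top degrees of $h(\Delta\setminus e)$ and $h(\Delta/e)$ are matched and the two pieces glue along shared variables. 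I expect this strengthening-and-gluing step to be the crux; it is precisely where matroid-only techniques run out, and the hope behind passing to $\C$ is that the extra complexes available for the induction, together with the added latitude in selecting $e$, finally provide enough room to close the argument.
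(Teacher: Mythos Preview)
The statement you are trying to prove is a \emph{conjecture} in the paper, not a theorem; the paper does not claim to prove it. What the paper does is propose a refinement (Conjecture~\ref{conj:main}) that would imply Stanley's conjecture via Theorem~\ref{thm:ImpStan}, and then verify that refinement only for pure shifted complexes (Theorem~\ref{thm:shifted}) and for Gale truncations of matroids of rank at most~$4$. So there is no ``paper's own proof'' to compare against.

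Your proposal has a genuine gap, and you essentially identify it yourself. The assembly step --- building a pure order ideal for $\Psi$ out of pure order ideals for $\Psi\backslash e$ and $\Psi\slash e$ --- is not a technicality but the entire content of the problem. You correctly note that it is not even true in general that a pure $O$-sequence plus $x$ times a pure $O$-sequence of one lower top degree is again a pure $O$-sequence; that observation alone kills any hope of a bare deletion--contraction induction. Saying ``strengthen the inductive hypothesis so the pieces glue'' is a description of what a proof would have to do, not a proof. The paper's Conjecture~\ref{conj:main} is precisely an attempt to formulate such a strengthening (carrying a multicomplex $\FF(\Psi)$ with prescribed support, degree, and poset compatibility), and even that strengthened statement remains open.

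There is also a factual error about the classes. You assert that your class $\C$ is closed under deletion and contraction of an \emph{arbitrary} vertex, and that this buys you freedom matroids lack. But Definition~\ref{def:quasicrypt} only requires closure under deletion of the \emph{largest} element; the classes QI, QE, QC studied here are not shown to be closed under arbitrary deletions (see Theorems~\ref{thm:QIconst} and~\ref{thm:QEconst}). The flexibility the paper actually exploits is different: closure under \emph{Gale truncations} (Theorems~\ref{thm:QEGale} and~\ref{thm:QE+QI+FBP}), which lets one peel off one basis at a time along the lexicographic shelling while staying inside QI$\cap$QE$\cap$FBP. That is the inductive mechanism behind Conjecture~\ref{conj:main}, not vertex deletion--contraction.
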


Being a pure $O$-sequence simply means that there exists a multicomplex $\cal O$ that realizes the $h$-vector of the independence complex of the matroid with the additional property that all maximal monomials of $\cal O$ with respect to divisibility have the same degree. Stanley's conjecture has received a lot of attention in the last few decades and is known to hold for various special classes of matroids \cite{MR1888777, MR2578897, MR3104512, MR2950481, MR2880645, MR3090174, MR3262195, OhThesis, MR3339029, Dall}. See  \cite{MR3339029} or Dall \cite{Dall} for details about the current status of the conjecture. 

More is known about matroid $h$-vectors. Hibi \cite{MR989204} found a set of inequalities satisfied by pure $O$-sequences and Chari \cite{MR1422892} provided a topological decomposition of the  
independence complex a matroid that implies Hibi's inequalities for the $h$-vector. Furthermore, Swartz \cite{MR1983365} provided an algebraic version of these inequalities in the artinian reduction of the Stanley-Reisner ring of the independence complex. Juhnke-Kubitzke and Van Dinh \cite{1604.02938} proved such inequalities for $h$-vectors of nbc complexes of representable matroids building on work of Huh \cite{MR3286530}.

In \cite{MR3339029} Klee and the author conjectured a more refined version of Stanley's conjecture that predicts the existence of a multicomplex $\cal O$ whose combinatorial structure is related to the combinatorial structure of the underlying matroid. The idea is to use the shelling order: each monomial of $\cal O$ corresponds to a basis of the matroid and depends on the restriction set of the basis. There are two main obstructions to such an approach. The first one is that constructing a matroid by using the shelling order yields intermediate complexes that do not come from matroids. The second one is that the purity cannot be expected to hold during the whole process, which means that a substitution for purity is required in the inductive setting. 

The main goal of this paper is to discuss three quasi-matroidal classes of complexes, i.e., classes of ordered simplicial complexes that contain all ordered matroids and all pure shifted complexes and such that a fixed simplicial complex that belongs to the quasi-matroidal class in question for every order of its vertex set is necessarily the independence complex of a matroid. Examples of quasi-matroidal classes are implicitly known in the literature. For instance, the class of ordered complexes with the property that the lexicographic order of the facets is a shelling order is an example. The class of ordered complexes for which the Gale ordering has a minimum is another example. Various new quasi-matroidal classes will be described in this paper. Three of these classes are deeply related to three classical cryptomorphisms: the independence, exchange, and circuit axioms. The three classes are pairwise different, enjoy some interesting properties of matroids and elucidate similarities between matroids and shifted complexes.  

The following list summarizes our results on quasi-matroidal classes: 
\begin{compactitem}
\item Each class carries a piece of matroid theory, and thus effectively provides a way to classify some matroid properties according to the classical matroid properties that need to be extended to achieve analogous results. 
\item The independence quasi-matroidal class implies that many of induced subcomplexes are pure and provides a formal dependence relation between the independence axiom and the purity of induced subcomplexes.

\item The exchange quasi-matroidal class turns out to be a subclass of the complexes that are shellable in lexicographic order. It provides a meaningful internal activity theory and is closed under what we call Gale truncations, which makes it a suitable class of complexes to do induction on the number of facets. 

\item The circuit quasi-matroidal class gives a good theory of fundamental circuits and a well behaved nbc complex theory.

\item The intesection of the exchange and circuit quasi-matroidal classes imply that the nbc complex is pure shellable and that there is a well behaved Tutte polynomial whose coefficients can be interpreted combinatorially in terms of internal and external activities.  

\item The complexes belonging to the independence and exchange quasi-matroidal classes that also satisfy another technical restriction admit a reformulation of the conjecture of Klee and the author \cite{MR3339029}. This conjecture turns the purity part of Stanley's conjecture into a poset theoretic restriction, which is suitable for induction on the number of bases (or facets). 

\item The new conjecture is satisfied by Gale truncations of matroids of rank up to four and Gale truncations of the internally perfect matroids defined by Dall \cite{Dall}. Furthermore, in order to verify the conjecture for complexes of rank $d$, it suffices to verify it for complexes with no more than $2d-1$ vertices.

\item It is also shown that the new conjecture is satisfied by pure shifted complexes. This provides a new proof of Stanley's conjecture for Schubert matroids. 
\end{compactitem}

The paper is organized as follows. Section 2 provides some background and definitions. Section 3 introduces the notion of a quasi-matroidal classes and studies some basic properties. Section 4 is devoted to the independence, exchange, and circuit quasi-matroidal classes and discusses some basic properties of each resulting class of complexes. Section 5 deals with the first facet property, a condition on the local structure of the ordered simplicial complexes that provides a lot of flexibility to play with combinatorial operations that preserve some quasi-matroidal classes. Section 6 discusses complexes in the intersection of the exchange and circuit quasi-matroidal classes and develops a theory of Tutte polynomials and nbc complexes. Section 7 gives the relaxation of the conjecture of Klee and the author for complexes with the first facet property that belong to the independence and exchange quasi-matroidal classes. It ends with a proof of the new conjecture for shifted complexes. Section 8 contains open problems, brief descriptions of future research projects and various comments about connections to the existing literature. 

\noindent{\bf Acknowledgements}
I would like to thank Isabella Novik, Steve Klee, Vic Reiner, Ernest Chong, Jeremy Martin and Gillaume Chapuy for many interesting conversations and suggestions.   

%------------------------------------
\section{Preliminaries}\label{sct:prelim}
An \emph{ ordered simplicial complex} is a pair $\Psi = (E, \Delta)$ where $E$ is a totally ordered finite set and $\Delta\subseteq 2^E$ is a simplicial complex, that is, if $A \in \Delta$ and $B\subset A$, then $B\in \Delta$. Matroid terminology will be used throughout the paper. Elements of $\Delta$ are called \emph{independent sets}. Maximal under inclusion independent sets are called \emph{bases}. The set of bases is denoted by $\B$. A complex $\Psi$ is called \emph{pure} if all the bases have the same cardinality. The smallest lexicographic basis is denoted by $B_0$. Minimal elements not in $\Delta$ are called \emph{circuits}. The set of circuits is denoted by $\C$. The $\emph{rank}$ of an independent set is equal to its cardinality and the \emph{rank} of a subset $A$ of $E$ is the maximum rank of an independent set contained in $A$. Abusing notation, define the rank of $\Psi$ to be the rank of $E$. The rank of $\Psi$ is usually denoted by $d$.  Two ordered complexes $\Psi=(E,\Delta)$ and $\Psi'=(E', \Delta')$ are said to be isomorphic if $|E| = |E'|$ and the unique ordered bijection of $E$ and $E'$ induces a bijection between $\Delta$ and $\Delta'$. 

 A \emph{loop} of $\Psi$ is an element of $E$ that is not in any basis. A \emph{vertex} is an element of $E$ that is not a loop. The set of vertices of $\Psi$ is denoted by $V(\Psi)$. A \emph{coloop} is an element of $\Psi$ that belongs to every basis. For $A\subset E$, define the \emph{restriction} $\Psi|_A$ to be the pair $(A, \Delta|_A)$, where $\Delta_A = \{ I\in \Delta \, : \, I \subseteq A\}$. The \emph{deletion} $\Psi\slash\{e\}$ of an element $e$ that is not a coloop is the restriction to $E\backslash \{e\}$. The \emph{contraction} $\Psi \backslash\{e\}$ of an element $e$ that is not a loop is the complex $(E\backslash\{e\}, \link_\Delta(e))$, where $\link_{\Delta}(e)= \{I \in \Delta \, | \, e\not\in I \text{ and } I\cup \{e\}\in \Delta\}$. The \emph{contraction} $\Psi\backslash I$ of an independent set $I$ is the complex that results from contracting the vertices of $I$ in any order. For an independent set $I$, let $B_{I,0}$ be the smallest lexicographic basis of $\Psi \backslash I$. The complex $(\{e\}, \{\emptyset\})$ is denoted by $\Psi_{\text{loop}}$ and the complex $(\{e\}, \{\emptyset, \{e\}\})$ is denoted by $\Psi_{\text{coloop}}$. 
 
Given two ordered complexes $\Psi = (E, \Delta)$ and $\Psi' = (E', \Delta')$, a \emph{shuffle} $s(E, E')$ of $E$ and $E'$ is an ordered set with order preserving inclusions $j:E\to X$ and $j':E'\to X$, such that $j(E)\cap j'(E') = \emptyset$ and $j(E)\cup j(E') = s(E, E')$. Given a shuffle $s(E, E')$ of $E$ and $E'$, the join $\Psi*_{s(E,E')}\Psi':=(s(E,E'), \Delta*\Delta')$ is the complex whose independent sets are of the form $j(I)\cup j'(I')$ for some $I\in \Delta$ and $I'\in \Delta'$. If the ranks of $\Psi$ and $\Psi'$ are equal, the connected sum $\Psi \# \Psi'$ is the complex obtained by identifying $B_0$ and $B_0'$ via the unique order preserving bijection. The rank-$k$ skeleton $\Psi^{(k)}$ of $\Psi$ is the the complex $(E, \Delta^{(k)})$ whose independent sets are the independent sets of $\Delta$ of rank at most $k$. 

Let $B$ be a basis. An element $e\in E\backslash B$ is called \emph{externally active} with respect to $B$ if there is a circuit $C\subset B\cup\{e\}$ such that $e$ is the smallest element of $C$. An element $e\in E\backslash B$ is \emph{externally passive} if it is not externally active. The sets of externally active and passive elements of $B$ are denoted by $EA(B)$ and $EP(B)$ respectively. An element $b \in B$ is called \emph{internally active} if $B$ is the smallest basis in lexicographic order that contains $B\backslash\{b\}$. Equivalently, there is no $b'<b$ that is not in $B$ and such that $(B\backslash\{b\})\cup\{b'\}\in \B$. An element $b\in B$ is \emph{internally passive} if it is not internally active. The sets of internally active and passive elements of $B$ are denoted by $IA(B)$ and $IP(B)$ respectively.

A \emph{broken circuit} of $\Psi$ is a subset $D$ of $E$ that is of the form $C-\{c\}$, where $C$ is a circuit and $c$ is the smallest element of $c$. The \emph{nbc complex} $\nbc(\Psi)$ of $\Psi$ is the complex $(E, \Gamma)$ whose bases are the bases of $\Psi$ that do not contain a broken circuit. 

The \emph{Gale ordering} of a pure ordered complex $\Psi$ is the poset $\Gale(\B, <_{\Gale})$ defined by the following relation: $B <_{\Gale}B'$ if and only if the elements of $B = \{ b_1 < \dots < b_d\}$ and $B'= \{b_1'< b_2'< \dots < b_d'\}$ satisfy   $b_i \le b_i'$ for every $i$. Let $\cal J$ be an order ideal of $\Gale(\B, <_{\Gale})$. The \emph{Gale truncation} at $\cal J$ is the complex $\Psi[{\cal J}] := (E, \Delta[{\cal J}])$ whose bases are the elements of $\cal J$. 

The \emph{Internal poset} $\Int(\Psi) = (\B, <_{\text{int}})$ of $\Psi$ is defined by the relation $B_1\le_{\text{int}} B_2$ whenever $IP(B_1) \subseteq IP(B_2)$. 

An ordered complex $\Psi$ is an ordered matroid if and only if either of the following three equivalent axioms is satisfied:  \begin{compactenum} \item[i.] {\bf Independence axiom:} If $I_1$ and $I_2$ are independent sets such that $|I_1|>|I_2|$ then there is an element $i\in I_1\backslash I_2$ such that $I_2\cup\{i\} \in \Delta$. 
\item[ii.] {\bf Exchange axiom:} If $B_1$ and $B_2$ are bases and $b_1$ is an element in $B_1 \backslash B_2$, then there is $b_2\in B_2\backslash B_1$ such that $B_1\backslash\{b_1\}\cup \{b_2\}$ is a basis. 
\item[iii.] {\bf Circuit axiom:} If $C_1$ and $C_2$ are circuits and $c\in C_1\cap C_2$, then there is a circuit $C_3$ contained in $(C_1 \cup C_2)\backslash c$. 
\end{compactenum}
A simple consequence of matroid duality (e.g see \cite{MR0227039}) is that $\Gale(\Psi)$ has a minimum and a maximum whenever $\Psi$ is an ordered matroid. Furthermore, Gale showed that this is a property that in fact characterizes matroids. 
\begin{thm}\label{thm:shell}\emph{\cite{MR1165544}} A simplicial complex $\Delta$ is the independence complex of a matroid if and only if the the Gale poset of $\Psi = (E, \Delta)$ has a minimum for every ordering $E$ of the vertex set of $\Delta$. \end{thm}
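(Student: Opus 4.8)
The plan is to prove the two implications by rather different means: the ``only if'' direction is essentially the greedy algorithm, while the ``if'' direction calls for an explicit construction of an ordering that destroys the Gale minimum. Since the Gale poset is only defined for pure complexes and matroid independence complexes are pure, I tacitly assume $\Delta$ is pure of rank $d$ (non-pure $\Delta$, for which the hypothesis is not even well posed, being excluded). For the forward implication, fix a matroid independence complex $\Delta$ and any ordering $E=\{e_1<\dots<e_n\}$; I would show that the smallest lexicographic basis $B_0$ is the minimum of $\Gale(\B,<_{\Gale})$. The key point is that $B_0$ is exactly the basis returned by the greedy algorithm, so $|B_0\cap\{e_1,\dots,e_j\}|=\rk(\{e_1,\dots,e_j\})$ for every $j$, whereas $|B\cap\{e_1,\dots,e_j\}|\le\rk(\{e_1,\dots,e_j\})$ for every basis $B$; reading off the $i$-th smallest element of $B_0$ (resp.\ $B$) as the least $e_j$ whose prefix count reaches $i$, these inequalities yield that the $i$-th smallest element of $B_0$ is at most that of $B$, i.e.\ $B_0\le_{\Gale}B$. (This is the consequence of matroid duality recorded above Theorem~\ref{thm:shell}, reproved here for completeness.)

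For the converse I argue by contraposition: assuming $\Delta$ is not a matroid, I produce an ordering of the vertex set of $\Delta$ whose Gale poset has no minimum. First note that if $\Gale(\B,<_{\Gale})$ had a minimum $M$, then $M\le_{\Gale}B$ for all bases $B$ would force $M$ to be lexicographically least, so $M=B_0$; hence it suffices to exhibit an ordering for which $B_0$ is \emph{not} componentwise minimal. Since $\Delta$ is not a matroid, the exchange axiom fails, so there are bases $B_1,B_2$ and an element $b_1\in B_1\setminus B_2$ such that, writing $A:=B_1\setminus\{b_1\}$, one has $A\cup\{b_2\}\notin\Delta$ for every $b_2\in B_2\setminus B_1=B_2\setminus A$. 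Note $A$ is independent of size $d-1$, extends to the basis $A\cup\{b_1\}=B_1$, and $B_2\setminus A$ is nonempty (having the same size as $B_1\setminus B_2\ni b_1$).

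Now order $E$ so that the elements of $A$ come first, then the elements of $B_2\setminus A$, then all of $R:=E\setminus(A\cup B_2)$ in any order; observe $b_1\in R$. Running the greedy algorithm, the first $d-1$ selected elements are exactly those of $A$; each element of $B_2\setminus A$ is then rejected because $A\cup\{b_2\}\notin\Delta$; finally the last selected element is the least $e^{\ast}\in R$ with $A\cup\{e^{\ast}\}\in\Delta$, which exists since $b_1$ is such an element. Thus $B_0=A\cup\{e^{\ast}\}$. The point is that $e^{\ast}\in R$ lies above every element of $B_2\setminus A$, hence above every element of $B_2$ (the remaining elements of $B_2$ lie in $A$, which precedes everything); so the largest element of $B_0$, namely $e^{\ast}$, strictly exceeds the largest element of $B_2$. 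Therefore $B_0\not\le_{\Gale}B_2$, so $B_0$ is not componentwise minimal, $\Gale(\B,<_{\Gale})$ has no minimum for this ordering, and the hypothesis is contradicted; hence $\Delta$ is a matroid.

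The routine part is the greedy/rank bookkeeping in the forward direction, together with the easy fact, valid in any pure complex, that the greedy algorithm outputs the lexicographically least basis. The crux is the converse, and within it the whole idea is the choice of ordering: pushing $A$ to the bottom and $B_2\setminus A$ immediately above it forces the greedy completion of $A$ to land in $R$, which is precisely where the failed exchange from $B_1$ toward $B_2$ gets converted into a comparison that $B_0$ loses. The only points needing care are the purity caveat flagged at the outset and the verification that $A\cup\{b_1\}=B_1$ really does guarantee the existence of $e^{\ast}$.
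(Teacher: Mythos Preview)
The paper does not give its own proof of this statement; it is quoted from the literature (the forward direction is remarked just before the theorem to be a consequence of matroid duality, and the characterization is attributed to Gale---note incidentally that the paper states the identical result twice, as Theorems~\ref{thm:shell} and~\ref{thm:Gale}, with different citations). So there is nothing to compare against, and I simply assess your argument on its own.

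Your proof is correct. The forward direction via prefix rank counts is the standard greedy argument, and your observation that the $i$-th smallest element of a basis $B$ is the least $e_j$ with $|B\cap\{e_1,\dots,e_j\}|\ge i$ cleanly converts the rank inequalities into the Gale comparison. For the converse, your construction is exactly the right idea: placing $A=B_1\setminus\{b_1\}$ at the bottom and $B_2\setminus A$ immediately above forces the greedy completion of $A$ to overshoot all of $B_2$, so $B_0\not\le_{\Gale}B_2$. The auxiliary fact you invoke---that in any \emph{pure} complex the greedy algorithm returns the lexicographically least basis---is true and worth one line of justification: if $S$ is the greedy output and $B$ any basis containing $S$, then every $b\in B\setminus S$ would have been accepted when processed (since the current set at that moment is contained in $S\subseteq B$), so $B=S$; minimality then follows because at the first index where $S$ and $B_0$ differ the greedy set coincides with an initial segment of $B_0$ and would have accepted the next element of $B_0$. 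Your purity caveat is also well placed, since the paper's Gale ordering is only defined for pure complexes.
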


Notice that the order of $E$ is not used at all in the definition of a matroid. The \emph{ordered uniform matroid} of rank $d$ over an ordered set $E$ is the complex $U_{E,d} = (E, X_d)$ whose bases are all the $d$-subsets of $E$. 

An ordered complex $\Psi=(E, \Delta)$ is \emph{shifted} if the following holds: if $B$ is a basis and $i<j\in E$ are such that $i\not\in B$ and $j\in B$, then $B\backslash \{i\} \cup \{j\}$ is also a basis. Equivalently, $\Psi$ is a Gale truncation of $U_{E,d}$ for some $d$. The Gale ordering of a shifted complex is isomoprhic to an ordered ideal of Young's lattice of integer partitions. 

The \emph{$f$-vector} of a rank-$d$ complex $\Psi$ is the vector $(f_{0}, f_0, \dots, f_{d})$ where $f_i$ is the number of independent sets of rank $i$. Notice that the empty set is the only independent set of rank $0$, thus $f_0= 1$. The \emph{$h$-vector} $(h_0, \dots, h_d)$ of $\Psi$ is given by the following polynomial relation \begin{equation}h(\Psi, x) := \sum_{j=0}^d h_jx^j = \sum_{j=0}^d f_{j}t^j(1-t)^{d-j}.\end{equation} The $h$-vector carries the same information as the $f$-vector and is sometimes more convenient, in particular, when studying simplicial complexes through the lens of commutative algebra. 

A \emph{shelling order} of a pure complex $\Psi$ is an order $B_1, \dots B_k$ of the bases such that for every $i<j$ there is $k\le j$ and $b\in B_j$ such that $B_i\cap B_j\subseteq B_k\cap B_j = B_j\backslash\{b\}$. The complex $\Psi$ is said to be \emph{shellable} if it admits a shelling order. The following property holds for every shelling order $B_1, \dots , \, B_k$ of a complex $\Psi$: For every $j=1, \dots k$ there is a unique minimal subset $\RR(B_j)$ of $B_j$ such that for every $i<j$ the set $\RR(B_j)$ is not contained in $B_i$. It turns out that \begin{equation}h(\Psi, x) = \sum_{j=1}^k x^{|\RR(B_j)|}. \end{equation}

It is known that both, ordered and pure shifted complexes are shellable. The lexicographic order of the bases is a shelling order. Again, this is another matroid defining property.

\begin{thm}\label{thm:Gale}\emph{\cite{MR0227039}} A simplicial complex $\Delta$ is the independence complex of a matroid if and only if the the Gale poset of $\Psi = (E, \Delta)$ has a minimum for every ordering $E$ of the vertex set of $\Delta$.
\end{thm}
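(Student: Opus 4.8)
The plan is to prove both directions via the \emph{greedy algorithm}: given an ordering $e_1 < \dots < e_n$ of $E$, scan the elements in increasing order, starting from $I = \emptyset$ and replacing $I$ by $I\cup\{e_j\}$ whenever $I\cup\{e_j\}\in\Delta$. For a pure complex the output $G = \{g_1 < \dots < g_d\}$ is a basis, and because $\Delta$ is closed under subsets a one-line argument shows that $g_{i+1}$ is the smallest element $e\notin\{g_1,\dots,g_i\}$ with $\{g_1,\dots,g_i,e\}\in\Delta$. (If $\Delta$ is not pure it is not a matroid and the Gale poset is undefined, so we may assume purity.) For the forward direction, suppose $\Delta$ is a matroid; I claim that for any fixed order the greedy output $B_0 = \{b_1 < \dots < b_d\}$ is the Gale minimum. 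If some basis $B = \{b_1' < \dots < b_d'\}$ had $b_i > b_i'$ with $i$ minimal, then applying the independence axiom to the independent sets $\{b_1,\dots,b_{i-1}\}$ and $\{b_1',\dots,b_i'\}$ produces $x\in\{b_1',\dots,b_i'\}\setminus\{b_1,\dots,b_{i-1}\}$ with $\{b_1,\dots,b_{i-1},x\}\in\Delta$ and $x\le b_i'<b_i$, contradicting the greedy property of $b_i$. Hence $B_0\le_{\Gale}B$ for every basis $B$, so $B_0$ is the minimum; this reproves the duality remark preceding Theorem~\ref{thm:shell}.

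For the converse I prove the contrapositive, and I record two auxiliary facts. First, a pure complex that is not a matroid has a subset $A\subseteq E$ with $\Delta|_A$ not pure --- the ``all restrictions are pure'' cryptomorphism, which is immediate from the independence axiom. Fixing facets $F_1, F_2$ of $\Delta|_A$ with $p := |F_1| < |F_2| =: q$ and replacing $A$ by $F_1\cup F_2$, these remain facets of the smaller restriction, and purity of $\Delta$ forces $F_1\cup F_2\subsetneq E$. Second, for any order, if the Gale poset has a minimum $M$ then $M$ equals the greedy output $G$: one proves $\{m_1,\dots,m_i\} = \{g_1,\dots,g_i\}$ by induction on $i$, where $g_{i+1}\le m_{i+1}$ follows from the greedy property and $m_{i+1}\le g_{i+1}$ follows because a basis of $\Delta$ containing $\{g_1,\dots,g_{i+1}\}$ has $(i+1)$-st smallest element $\le g_{i+1}$ while $M$ is Gale-below it.

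Now build an ordering by listing the elements of $F_1$ first, then those of $A\setminus F_1$, then those of $E\setminus A$ (arbitrarily within each block). The greedy algorithm then selects all of $F_1$; it selects nothing from $A\setminus F_1$ because $F_1$ is a maximal face of $\Delta|_A$; and it finishes with $d-p\ge 1$ elements of $E\setminus A$, which exist because any basis of $\Delta$ containing $F_1$ contains no further element of $A$. So the $(p+1)$-st smallest element of the greedy output $G$ lies in $E\setminus A$. On the other hand, extend $F_2$ to a basis $B'$ of $\Delta$; since at least $q>p$ elements of $B'$ lie in $A$ and every element of $E\setminus A$ exceeds every element of $A$, the $(p+1)$-st smallest element of $B'$ lies in $A$. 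Comparing $G$ and $B'$ at coordinate $p+1$ gives $G\not\le_{\Gale}B'$; since the Gale minimum of this order, if it existed, would have to be $G$, this order has no Gale minimum, which is the contrapositive.

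The main obstacle is the converse, specifically the interplay between the reduction to $\Delta|_{F_1\cup F_2}$ and the ``minimum equals greedy output'' lemma: Gale's condition a priori compares all pairs of bases, so one must manufacture a single canonical small basis (the greedy one) against which one deliberately chosen ``bad'' basis can be tested, and then control exactly which elements the greedy algorithm selects once it is forced through the three blocks $F_1$, $A\setminus F_1$, and $E\setminus A$.
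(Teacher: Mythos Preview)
The paper does not give its own proof of this theorem: it is stated in Section~\ref{sct:prelim} as a classical result of Gale, cited from \cite{MR0227039}, and no argument is supplied. So there is no paper proof to compare against.

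Your proof is correct. The forward direction is the standard greedy argument, and your use of the independence axiom on $\{b_1,\dots,b_{i-1}\}$ versus $\{b_1',\dots,b_i'\}$ is clean. For the converse, the two lemmas you isolate are exactly what is needed: reducing to a non-pure restriction $\Delta|_{F_1\cup F_2}$ via the ``all restrictions pure'' cryptomorphism, and showing that any Gale minimum must coincide with the greedy output. The three-block ordering $F_1$, then $F_2\setminus F_1$, then $E\setminus A$ is the natural construction; your observation that the greedy algorithm stalls on the middle block (because $F_1$ is a facet of $\Delta|_A$) and therefore places its $(p+1)$-st element in $E\setminus A$, while any basis extending $F_2$ has its $(p+1)$-st element in $A$, pins down the failure of the Gale relation at coordinate $p+1$ precisely.

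One small remark: when you replace $A$ by $F_1\cup F_2$, you assert that $F_1$ remains a facet of the smaller restriction. This is true (no element of $F_2\setminus F_1\subseteq A\setminus F_1$ can be added to $F_1$, since $F_1$ was already maximal in $\Delta|_A$), and it is worth saying explicitly since the argument hinges on it. Likewise, the sentence ``purity of $\Delta$ forces $F_1\cup F_2\subsetneq E$'' deserves one more word: any basis of $\Delta$ containing $F_1$ must use an element outside $A$, since $F_1$ cannot be extended within $A$.
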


A pure ordered complex $\Psi = (E, \Delta)$ is \emph{vertex decomposable} if and only if either one of the following holds: 
\begin{compactitem}
\item $\Psi$ has exaclty one basis.
\item There exists a vertex $e$ of $\Psi$ such that $\Psi\slash \{e\}$ is vertex decomposable of the same rank of $\Psi$ and $\Psi \backslash  \{e\}$ is vertex decomposable. 
\end{compactitem}

If $\Psi$ is a vertex decomposable with more than one basis, a vertex $e$ of $\Psi$ that satisfies the second condition of vertex decomposability is called a \emph{shedding vertex}. It is a theorem of Billera and Provan \cite{MR593648} that every non coloop vertex decomposable complex is shellable. They furthermore showed that matroids are vertex decomposable: any vertex is a shedding vertex. Pure shifted complexes are also vertex decomposable: the largest vertex is always a shedding vertex. 

For two sets $A, B$, let $A\triangle B$ be their symmetric difference, i.e, the set $(A\backslash B) \cup (B\backslash A)$. Whenever a subset of a small set is considered we omit parethenses and commas to simplify notation. For example, the subset $\{2,4\}$ of $\{1,2,3,4,5\}$ is denoted by $24$. 

%%%%%%%QUASICRYPTS%%%%%%%
\section{Quasi-matroidal classes of ordered complexes}

In this section $\cal A$ denotes a class of ordered complexes. We say that $\cal A$ is \emph{closed under joins} if for every pair of complexes $\Psi$ and $\Psi'$ in $\cal A$ and every shuffle $s$ of their groundsets, the join $\Psi *_s \Psi'$ is a complex in $\cal A$. We say that $\cal A$ is \emph{closed under deletions} if for every complex $\Psi$, the deletion of the largest element of the groundset yields a complex in $\cal A$. Finally, we say that $\cal A$ is \emph{closed under contractions} if for every complex $\Psi$ the contraction of every independent set of $\Psi$ is a complex in $\cal A$. The following notion encapsulates the central type of objects we will study.

\begin{defn}\label{def:quasicrypt} A class $\cal A$ of ordered simplicial complexes is called a \emph{quasi-matroidal} if the following conditions are satisfied: 
\begin{compactenum}
\item Every ordered matroid is an object in $\cal A$. 
\item If $\Delta$ is a simplicial complex with vertex set $X$ and for every order $E$ of $X$, the pair $(E, \Delta)$ is in $\cal A$, then $\Delta$ is a matroid independence complex. 
\item Every pure shifted complex is in $\cal A$. 
\item $\cal A$ is closed under joins, deletions and contractions. 
\end{compactenum}
\end{defn}

Theorems~\ref{thm:shell} and \ref{thm:Gale} provide two different examples of quasi-matroidal classes.

\begin{ex}\label{ex:shell}  The following two classes are quasi-matroidal: 
\begin{compactenum}
\item The class LEX of all pure ordered complexes closed under joins, deletions and contractions for which the lexicographic order on the bases is a shelling order.
\item The class GALE of all pure ordered complexes closed under joins, deletions and contractions, for which the Gale poset has a unique minimal basis. 
\end{compactenum}
\end{ex}

A slightly bigger class that contains both of the above classes is the following one: 
\begin{ex}
Let PURE denote the class of all pure ordered complexes such that a complex $\Psi = (E, \Delta)$ is in PURE if and only if one of the following is satisfied: 
\begin{compactitem}\item $\Psi$ has exactly one basis or,  
\item The deletion $\Psi\slash \{v\}$ is in PURE and has the same rank as $\Psi$ if $v$ denotes the largest non-coloop vertex of $\Psi$ and every contraction of $\Psi$ is in PURE. 
\end{compactitem}
\end{ex}
Pure shifted complexes as well as matroids are easily seen to be in PURE. In fact, PURE is a quasi-matroidal class due to the following classical theorem. 

\begin{thm}\emph{\cite{MR1453579}} A simplicial complex $\Delta$ is the independence complex of a matroid if and only if every induced subcomplex is pure. 
\end{thm}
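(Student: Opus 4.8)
The plan is to prove the two implications separately: the forward one is immediate from the axioms, and the reverse one hinges on applying purity to a single, carefully chosen induced subcomplex.

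First I would handle the easy direction. If $\Delta$ is the independence complex of a matroid on vertex set $X$, then for every $A \subseteq X$ the restriction $\Delta|_A = \{I \in \Delta : I \subseteq A\}$ again satisfies the independence axiom (the axiom for $\Delta$, read only among sets contained in $A$, is literally the axiom for $\Delta|_A$), so $\Delta|_A$ is itself a matroid independence complex. It then remains to observe that a matroid independence complex is pure: if $B_1, B_2$ were bases with $|B_1| < |B_2|$, the independence axiom applied to $I_1 = B_2$ and $I_2 = B_1$ would produce an element of $B_2 \setminus B_1$ that can be adjoined to $B_1$, contradicting maximality of $B_1$. Hence all bases of $\Delta|_A$ have equal size, i.e. $\Delta|_A$ is pure.

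For the converse I would assume every induced subcomplex of $\Delta$ is pure and verify the independence axiom directly. Given $I_1, I_2 \in \Delta$ with $|I_1| > |I_2|$, set $A = I_1 \cup I_2$ and pass to $\Delta|_A$. Both $I_1$ and $I_2$ belong to $\Delta|_A$, so each can be enlarged, one element at a time, to a basis of $\Delta|_A$; call these $B_1 \supseteq I_1$ and $B_2 \supseteq I_2$. Purity of $\Delta|_A$ gives $|B_2| = |B_1| \ge |I_1| > |I_2|$, so $B_2 \setminus I_2 \neq \emptyset$; choose $x \in B_2 \setminus I_2$. Since $B_2 \subseteq A = I_1 \cup I_2$ and $x \notin I_2$, necessarily $x \in I_1 \setminus I_2$; and since $I_2 \cup \{x\} \subseteq B_2 \in \Delta$ and $\Delta$ is closed under taking subsets, $I_2 \cup \{x\} \in \Delta$. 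This is exactly the conclusion of the independence axiom, so $\Delta$ is a matroid independence complex.

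I do not expect a genuine obstacle here: the only real idea is the choice $A = I_1 \cup I_2$, which forces a basis of $\Delta|_A$ extending $I_2$ to be strictly larger than $I_2$ yet to remain inside $I_1 \cup I_2$. The routine points to check are the degenerate cases ($I_2 = \emptyset$, or $I_1$ and $I_2$ incomparable or disjoint), all covered verbatim by the argument above, together with the trivial fact, used implicitly, that in a finite simplicial complex every face extends to some basis. (One could alternatively deduce the statement from Theorem~\ref{thm:shell} or Theorem~\ref{thm:Gale}, but the direct route is shorter and self-contained.)
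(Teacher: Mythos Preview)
Your proof is correct and is the standard argument for this classical characterization. Note, however, that the paper does not supply its own proof of this theorem: it is stated with a citation to \cite{MR1453579} and used as a known result to justify that PURE is a quasi-matroidal class, so there is no in-paper proof to compare against. Your argument---restricting to $A = I_1 \cup I_2$ and using purity of $\Delta|_A$ to force a basis above $I_2$ to pick up an element of $I_1\setminus I_2$---is exactly the usual one found in the literature.
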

 
PURE explains some of the first pleasant similarities between pure shifted complexes and matroids. 
\begin{thm}\label{thm:PURE} Every ordered complex in PURE is vertex decomposable and hence shellable. 
\end{thm}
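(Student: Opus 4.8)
The plan is to prove the statement by induction on the number of bases of $\Psi$, following the very definition of the class $\mathrm{PURE}$, and checking at each step that the defining recursion for $\mathrm{PURE}$ matches the defining recursion for vertex decomposability. The base case is immediate: if $\Psi$ has exactly one basis, then $\Psi$ is vertex decomposable by the first clause of the definition of vertex decomposability, and it is shellable (a simplex and its faces are trivially shellable), so there is nothing to do.

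For the inductive step, suppose $\Psi = (E,\Delta)$ lies in $\mathrm{PURE}$ and has more than one basis. By the second clause defining $\mathrm{PURE}$, the element $v$, defined to be the largest non-coloop vertex of $\Psi$, has the property that $\Psi/\{v\}$ is in $\mathrm{PURE}$ and has the same rank as $\Psi$, and every contraction of $\Psi$ is in $\mathrm{PURE}$; in particular the contraction $\Psi\backslash\{v\}$ is in $\mathrm{PURE}$. Both $\Psi/\{v\}$ and $\Psi\backslash\{v\}$ have strictly fewer bases than $\Psi$ (the deletion loses exactly the bases through $v$, and there is at least one such basis since $v$ is a vertex; the contraction link has fewer facets since $v$ is not a coloop), so the inductive hypothesis applies and both are vertex decomposable. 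Since $\Psi/\{v\}$ has the same rank as $\Psi$, the element $v$ is a shedding vertex in the sense required by the second clause of the definition of vertex decomposability, and therefore $\Psi$ is vertex decomposable. Here one must double-check that $v$ is genuinely a \emph{vertex} of $\Psi$ in order to invoke the vertex decomposability recursion; this is built into the choice of $v$ as a non-coloop \emph{vertex}, and the coloop case is handled separately below.

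It remains to deduce shellability. By the Billera--Provan theorem quoted in the excerpt, every non-coloop vertex decomposable complex is shellable, so we are done once we observe that a coloop contributes nothing essential: if $\Psi$ has a coloop $c$, write $\Psi = \Psi' *_s \Psi_{\text{coloop}}$, note $\Psi'$ is again in $\mathrm{PURE}$ with the same number of bases, peel off all coloops one at a time, and apply the non-coloop case to what remains, using that a join of a shellable complex with a single coloop is shellable. The only subtle point in the whole argument, and the place I would be most careful, is the second bullet: verifying that the recursive structure of $\mathrm{PURE}$ (deletion of the \emph{largest non-coloop vertex}, and \emph{all} contractions) really does hand us exactly a shedding vertex for vertex decomposability — i.e., that the deletion appearing in the vertex decomposability recursion is the restriction to $E\setminus\{v\}$ (which in the paper's terminology is $\Psi/\{v\}$) and the contraction there is $\Psi\backslash\{v\}$, so the two recursions line up term for term.
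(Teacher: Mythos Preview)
The paper states this theorem without proof, treating it as an immediate consequence of the recursive definitions of PURE and of vertex decomposability. Your argument is exactly the intended one: the two recursions line up clause for clause, with the largest non-coloop vertex serving as the shedding vertex, and induction on the number of bases goes through since both the deletion and the contraction at that vertex strictly lose bases. The only comment is that your coloop-peeling paragraph is unnecessary---the standard Billera--Provan theorem asserts that \emph{every} pure vertex decomposable complex is shellable, with no hypothesis on coloops; the paper's phrase ``non coloop'' is a quirk of wording (a shedding vertex is automatically a non-coloop, since deleting a coloop drops the rank), so once you have vertex decomposability you are done.
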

While the classes in the examples above explain various similarities between shifted complexes and matroid independence complexes, they are too big and contain many complexes that are far from shifted complexes or matroids. For instance, all of them contain the complex $\Psi = ([4], \Delta)$ with bases $12, 13, 24$. This complex is the path with three edges, which is the canonical example of a complex whose Laplacian has non-integral spectra. Thus it is desirable to consider smaller quasi-matroidal classes, so that the complexes belonging to such classes share deeper structural properties with matroids and shifted complexes. 

In order to do so, we introduce a few basic constructions of quasi-matroidal classes. Given two quasi-matroidal classes $\A$ and $\A'$, the class $\A\cap \A'$ of all complexes contained in both $\A$ and $\A'$ is clearly quasi-matroidal. A class $\A'$ is called a subclass of $\A$ if all the elements of $\A'$ are elements of $\A$. 

\section{Three quasi-matroidal classes}\label{sct:axioms}
The purpose of this section is to introduce three new quasi-matroidal classes that will be studied throughout the paper. 

\begin{defn} Let $\Psi=(E, \Delta)$ be an ordered simplicial complex. The classes QI, QE and QC are defined by the following axioms. 
\begin{compactitem}
\item {\bf Quasi-independence Axiom (QI):} $\Psi$ is pure and for every pair of independent sets $I_1, I_2$, if $|I_1|>|I_2|$ and $I_1\backslash I \subseteq B_{I,0}$ for some $I\subseteq I_1\cap I_2$, then there exists $e\in I_1\backslash I_2$ such that $I_2\cup\{e\}$ is independent. 
 \item {\bf Quasi-exchange Axiom (QE):}  $\Psi$ is pure and for every pair $B_1,\, B_2$ of bases of $\Delta$, if $b_1 \in B_1\backslash B_2$ satisfies $b_1 > \max B_2\backslash B_1$, then there is $b_2 \in B_2\backslash B_1$ such that $(B_1\backslash \{b_1\})\cup \{b_2\}\in\Delta$.
 \item {\bf Quasi-circuit Axiom (QC):} If $C_1$, $C_2$ are distinct circuits $(E,\Delta)$ and $c \in C_1\cap C_2$ such that $c<\max C_1\triangle C_2$, then there is a circuit $C_3$ of $(E,\Delta)$ contained in $(C_1\cup C_2)\backslash\{c\}$.
\end{compactitem}
\end{defn}

The first goal is to show that the classes QI, QE, and QC are quasi-matroidal. It is straightforward to see that ordered matroids belong to the three classes. The second quasi-matroidal axiom is also straightforward: removing the conditions of the order in QI, QE, and QC yields the classic independence, exchange and circuit axioms of matroid theory. On the other hand it is an interesting exercise to show that shifted simplicial complexes satisfy the axioms.

\begin{thm} If $\Psi$ is a shifted complex, then $\Psi$ belongs to QI, QE and QC. 
\end{thm}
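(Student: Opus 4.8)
\emph{Strategy and setup.} I would work throughout with the characterization recalled in Section~\ref{sct:prelim}: a shifted complex $\Psi=(E,\Delta)$ of rank $d$ is a Gale truncation of $U_{E,d}$, so (identifying $E$ with $[n]$, $n\ge d$) it is pure and its set of bases $\B$ is a nonempty order ideal of $\binom{[n]}{d}$ under the Gale order $\le_\Gale$. The one gadget I want first is the \emph{greedy completion}: for a face $S\in\Delta$ let $\widehat S:=S\cup\{\text{the }d-|S|\text{ smallest elements of }[n]\setminus S\}$. Using the threshold description of the Gale order ($A\le_\Gale B$ iff $|A\cap[1,t]|\ge|B\cap[1,t]|$ for all $t$) one checks that $\widehat S$ is the $\le_\Gale$-least $d$-subset of $[n]$ containing $S$; since $\B$ is an order ideal this gives: (i) $S\in\Delta\iff\widehat S\in\B$, so every face lies in the basis $\widehat S$, and in particular $B_{I,0}=\widehat I\setminus I$ is exactly the set of the $d-|I|$ smallest elements of $[n]\setminus I$; and $B_0=[d]$. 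I would also record that for a face $K$ the contraction $\Psi\backslash K$ is again a pure shifted complex on $E\backslash K$ (for a single vertex $e$: the link is pure because $\Psi$ is, and a shift of a basis of $\Delta$ containing $e$ again contains $e$, so the links' bases form a Gale order ideal; then iterate). With (i) in hand \emph{QE} is instantaneous: given bases $B_1,B_2$ and $b_1\in B_1\setminus B_2$ with $b_1>\max(B_2\setminus B_1)$, put $b_2:=\max(B_2\setminus B_1)$; then $b_2<b_1$, $b_2\notin B_1$, $b_1\in B_1$, so the shift property gives that $(B_1\setminus\{b_1\})\cup\{b_2\}$ is a basis, with $b_2\in B_2\setminus B_1$.

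\emph{QC.} Let $C_1\ne C_2$ be circuits with $c\in C_1\cap C_2$ and $x:=\max(C_1\triangle C_2)>c$; after possibly swapping $C_1,C_2$ assume $x\in C_1\setminus C_2$, and set $W:=C_1\cup C_2$. If $|W\setminus\{c\}|>d$ then $W\setminus\{c\}$ is dependent, hence contains a circuit, and we are done; so assume $|W\setminus\{c\}|\le d$ and suppose for contradiction that $W\setminus\{c\}$ is a face. Let $\widehat B:=\widehat{W\setminus\{c\}}$, a basis containing $W\setminus\{c\}\ni x$. If $c\in\widehat B$, then $\widehat B\supseteq W\supseteq C_1$, so $C_1$ is a face, contradicting that it is a circuit. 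If $c\notin\widehat B$, then (as $c<x$, $c\notin\widehat B$, $x\in\widehat B$) the shift property produces a basis $\widehat B':=(\widehat B\setminus\{x\})\cup\{c\}$; since $x\notin C_2$ we get $C_2\subseteq W\setminus\{x\}\subseteq\widehat B'$, so $C_2$ is a face — again a contradiction. Hence $(C_1\cup C_2)\setminus\{c\}$ is dependent and contains the required circuit $C_3$.

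\emph{QI.} Suppose $I_1,I_2\in\Delta$, $|I_1|>|I_2|$, and $I\subseteq I_1\cap I_2$ satisfies $I_1\setminus I\subseteq B_{I,0}$. Since $\Psi\backslash I$ is again pure shifted and $B_{I,0}$ is its minimum basis, replacing $(\Psi,I_1,I_2)$ by $(\Psi\backslash I,\,I_1\setminus I,\,I_2\setminus I)$ reduces us to the case $I=\emptyset$: now $I_1\subseteq B_0=[d]$, $I_2\in\Delta$, $|I_1|>|I_2|$, and we must find $e\in I_1\setminus I_2$ with $I_2\cup\{e\}\in\Delta$. Any $e\in(I_1\setminus I_2)\cap\widehat{I_2}$ works, because then $I_2\cup\{e\}\subseteq\widehat{I_2}\in\B$; so it suffices to show $(I_1\setminus I_2)\cap\widehat{I_2}\ne\emptyset$. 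Assume not. First, $\widehat{I_2}\setminus I_2$ — the $d-|I_2|$ smallest elements of $[n]\setminus I_2$ — lies inside $[d]\setminus I_2$, since $[d]\setminus I_2$ already has $\ge d-|I_2|$ elements, each preceding every element above $d$. As $(I_1\setminus I_2)\cap\widehat{I_2}=\emptyset$ we have $\widehat{I_2}\cap I_1=I_1\cap I_2$, hence $|I_1\setminus\widehat{I_2}|=|I_1\setminus I_2|$; and $\widehat{I_2}\setminus I_2$ and $I_1\setminus\widehat{I_2}$ are disjoint subsets of $[d]\setminus I_2$ of sizes $d-|I_2|$ and $|I_1\setminus I_2|$, so $|[d]\setminus I_2|\ge(d-|I_2|)+|I_1\setminus I_2|$, i.e. $|I_2\setminus[d]|\ge|I_1\setminus I_2|$. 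On the other hand $I_1\cap I_2\subseteq I_1\subseteq[d]$ gives $I_1\cap I_2\subseteq I_2\cap[d]$, whence $|I_2\setminus[d]|=|I_2|-|I_2\cap[d]|\le|I_2|-|I_1\cap I_2|$. Combining, $|I_1|-|I_1\cap I_2|\le|I_2|-|I_1\cap I_2|$, so $|I_1|\le|I_2|$, a contradiction. Thus $e$ exists and QI holds.

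The routine-but-necessary pieces are the basic properties of $\widehat{\,\cdot\,}$ (equivalently, of the greedy/lexicographic extension in a Gale truncation) and the stability of pure shiftedness under contraction; the only genuinely delicate step is the counting in the QI argument, whereas QE and QC become short once the greedy-completion machinery is in place. I expect the main obstacle to be organizing the QI reduction cleanly and making sure the counting is airtight.
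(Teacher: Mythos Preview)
Your proof is correct and, for QE and QC, essentially identical to the paper's: both exploit shiftedness directly to perform the required swap, and your QC argument is the same ``assume $(C_1\cup C_2)\setminus\{c\}$ is independent and shift $x\mapsto c$ to trap a circuit'' idea, with the trivial case $|(C_1\cup C_2)\setminus\{c\}|>d$ made explicit.

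For QI the two arguments diverge slightly. You first contract the witness $I$ (using that links in shifted complexes are shifted) to reduce to $I=\emptyset$, and then run a careful counting argument inside $[d]$ with the greedy completion $\widehat{I_2}$. The paper instead works directly in $\Psi$: it uses that $B_{I_2,0}\subseteq B_{I,0}$ whenever $I\subseteq I_2$ and then applies pigeonhole to $B_{I_2,0}$ and $I_1\setminus I$ inside $B_{I,0}$. The underlying content is the same (both locate the desired $e$ inside $B_{I_2,0}$), but your reduction-then-count is arguably cleaner, since the paper's stated justification (``$B_{I,0}$ and $B_{I_2,0}$ are initial segments of $B_0$'') is literally only correct when $I$ and $I_2$ are disjoint from $B_0$; the inclusion $B_{I_2,0}\subseteq B_{I,0}$ is nonetheless true in general, so the paper's pigeonhole goes through once that gap is patched. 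Your greedy-completion device makes all of this transparent.
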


\begin{proof} To prove that $\Psi$ belongs to QI notice that if $I$ is any independent set disjoint from $B_0$, then $B_{I,0}$ is the initial subset of $B_0$ of size $d-|I|$, where $d$ is the rank of $B_0$. Thus if $I_1$ and $I_2$ are independent sets that satisfy the conditions of QI with a witness $I\subseteq I_1\cap I_2$. Then $I_1\backslash I$ is a subset of the first $d-|I|$ elements of $B_0$ and $B_{I_2, 0}$ consists of the first $d-|I_2|$ elements of $B_0$. Hence $B_{I_2, 0} \subseteq B_{I,0}$. Then $B_{I_2,0}\cap (I_1\backslash I) \not= \emptyset$. Otherwise, $|B_{I_2,0}\cup (I_1\backslash I)| = (d-|I_2|) + (|I_1|- |I|) > d - |I| = |B_{I,0}|$ which is a contradiction since both are subsets of $B_{I,0}$. 

To prove that $\Psi$ belongs to QE, notice that if $B_1$ and $B_2$ are bases and $b_1\in B_1\backslash B_2$ is bigger than any element in $B_2\backslash B_1$, then shiftedness of $\Psi$ allows to choose any $b_2\in (B_2\backslash B_1)$ to replace $b_1$ in $B_1$. 

To prove that $\Psi$ belongs QC, let $C_1$ and $C_2$ be circuits of $\Psi$ and let $c \in C_1\cap C_2$ satisfy the conditions for QC. Assume that $I:=(C_1 \cup C_2)\backslash c$ is independent and let $c' = \max{C_1 \triangle C_2}$. By shiftedness, $I':=(I\backslash\{c'\})\cup\{c\}$ is independent, but it also contains either $C_1$ or $C_2$, which is a contradiction. 
\end{proof}

It is clear that every quasi-matroidal class can be transformed into a matroid cryptomorphism. While matroid cryptomorphisms give rise to the same class, there are various quasi-matroidal classes each of which highlights different aspects of matroid theory. The following theorem shows that the three defined classes are indeed different.

\begin{thm}\label{thm:ex} The classes QI, QE and QC are all distinct, furthermore, no class is contained in another one. 
\end{thm}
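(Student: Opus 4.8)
The plan is to obtain all six non-containments at once by exhibiting, for each of QI, QE and QC, a single ordered simplicial complex that lies in that class but in neither of the other two; pairwise distinctness of the classes is then an immediate byproduct. Concretely I want a complex in QI that is in neither QE nor QC, a complex in QE that is in neither QI nor QC, and a complex in QC that is in neither QI nor QE.

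The last of these is essentially free: since membership in QI and in QE both require purity, any \emph{non-pure} complex satisfying the circuit axiom QC does the job. I would take $([3],\Delta)$ with $\Delta=\{\emptyset,1,2,3,12\}$, whose bases are $12$ and $3$. Its circuits are $13$ and $23$; the only pair of distinct circuits is $\{13,23\}$, meeting in $3$, and there $3>2=\max(13\triangle 23)$, so the hypothesis of QC is never triggered and QC holds vacuously, while non-purity excludes the complex from QI and from QE.

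For the remaining two witnesses I would work with ordered graphs (pure rank-$2$ complexes on $[4]$), recording first how the axioms specialize there: QI holds iff the two endpoints of the lexicographically least edge $B_0$ are, between them, adjacent to every other vertex; QC fails iff there is a vertex $c$ and vertices $a,b$ with $\{c,a\},\{c,b\}$ non-edges, $\{a,b\}$ an edge, and $c<\max\{a,b\}$; and QE fails iff there are two disjoint edges, one of which, say $\{a,b\}$ with $b=\max\{a,b\}$, contains the largest of the four vertices involved, while $a$ is adjacent to neither endpoint of the other edge. Guided by this, for QE I would take the complex with bases $13,23,24$: here $B_0=13$ but $4$ is adjacent to neither $1$ nor $3$, so QI fails; the circuits are $12,14,34$, and $12,14$ meet in $1<4$ with no circuit inside $\{2,4\}$, so QC fails; and running over the six ordered pairs of distinct bases shows that a replacement $b_2$ is available whenever $b_1>\max(B_2\setminus B_1)$, so QE holds. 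For QI I would take the complex with bases $14,23,24,34$ — a triangle on $\{2,3,4\}$ together with the pendant edge $14$: here $B_0=14$ and $\{1,4\}$ covers $2$ and $3$, so QI holds; the circuits are $12$, $13$ and the triangle $234$, and $12,13$ meet in $1<3$ with no circuit inside $\{2,3\}$, so QC fails; and for the bases $14$ and $23$ the element $4>\max\{2,3\}$ admits no exchange, since neither $12$ nor $13$ is a basis, so QE fails.

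The remaining content is the finite, but careful, verification of these claims: that each of the three complexes does lie in its intended class and does violate the axioms of the other two. The only genuinely delicate step is confirming that the complex with bases $13,23,24$ satisfies QE, because QE is the strongest of the three axioms, so this example must be balanced so that \emph{every} instance of the localized exchange demanded by the hypothesis can be carried out while the QI covering property and the QC circuit-elimination property both really break down. I would organize that check by listing the ordered pairs $(B_1,B_2)$ of distinct bases, isolating for each the elements $b_1\in B_1\setminus B_2$ with $b_1>\max(B_2\setminus B_1)$, and displaying a suitable $b_2\in B_2\setminus B_1$ in each such case; the analogous verifications for the other two complexes are shorter. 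Once this is done, each of the six non-containments is witnessed by one of the three complexes, and the theorem follows.
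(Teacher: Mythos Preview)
Your argument is correct and follows the same overall strategy as the paper --- exhibit explicit small complexes witnessing each non-containment --- but with partially different witnesses. Your QI example (bases $14,23,24,34$ on $[4]$) is exactly the paper's $\Psi_2$. Your QE example (bases $13,23,24$ on $[4]$) is smaller than the paper's $\Psi_4$ (bases $13,14,23,24,25$ on $[5]$), and works for the same reasons. The genuine difference is your QC witness: you exploit that the QC axiom, unlike QI and QE, does not demand purity, so a non-pure complex satisfying QC vacuously immediately lies outside the other two classes. This is a legitimate shortcut. The paper instead uses a \emph{pure} rank-$2$ complex $\Psi_3$ (bases $12,13,23,34$) in QC but not in QI or QE, which takes slightly more checking but yields the marginally stronger information that the non-containments persist even among pure complexes. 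Either set of examples proves the theorem as stated; your three witnesses already cover all six non-containments, whereas the paper uses four (its $\Psi_1$, in QI$\cap$QE but not QC, is redundant for the theorem though it feeds into the subsequent question about intersections of the classes).
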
 

\begin{proof} For every pair of axioms one has to provide examples of complexes that satisfy one but not the other axiom. 
\begin{compactitem} 
\item The complex $\Psi_1=(\{1,2,3,4\}, \Delta_1)$ with bases $12,13,14,34$ satisfies QI, QE, but the pair of circuits $23,24$ contradicts QC. 

\item The complex $\Psi_2 = (\{1,2,3,4\}, \Delta_2)$ with bases $14, 24,23,34$ satisfies QI, but the bases $14$ and $23$ show that it does not satisfy QE, and the circuits $13$, $14$ show that it does not satisfy QC.  

\item The complex $\Psi_{3} = (\{1,2,3,4\}, \Delta_3)$ with bases $12, 13, 23, 34$ satisfies QC, but fails QI and QE. 

\item The complex $\Psi_{4} = (\{1,2,3,4,5\}, \Delta_4)$ with bases $13,14,23,24,25$ satisfies QE, but not QI or QC. 
\end{compactitem}
\end{proof}

Notice that the proof of the theorem shows that QC is not contained in QI$\cap$QE. On the other hand, it is a straightforward exercise in graph theory to check that a rank-two complex that belongs to  QE$\cap$QC also belongs to QI. Nevertheless, the proof fails in rank-three and it is natural to ask the following question.

\begin{ques}Are the classes QI$\cap$QE, QI$\cap$QC, QE$\cap$QC and QI$\cap$QE$\cap$QC all distinct?
\end{ques}

Regardless of the answer, the definitions provide various classes of simplicial complexes and it should come as no surprise that each of them shares some structural properties with the family of ordered matroids. The goal for the rest of this section is to start developing the first steps of a theory for these classes of complexes that includes some analogs of matroid properties as well as to provide various types of constructions that can be performed within a given class.

\subsection{The quasi-independence class}\label{sect:indep}
We now show that QI is a quasi-matroidal class with a little extra structure. 

\begin{thm}\label{thm:QIconst} Let $\Psi= (E, \Delta)$ and $\Psi'=(E', \Delta')$ be ordered complexes in QI. %Let $v\in E\backslash B_0$,  let $k\le \dim (\Delta)$ be an integer, and let $s(E, E')$ be a shuffle of $E$ and $E'$. The following complexes are in QI:
\begin{compactenum}
\item[i.] If $s(E,E')$ is a shuffle of $E$ and $E'$ then the join $\Psi*_{s(E, E')} \Psi'$ is in QI.
\item[ii.] If $v\in E\backslash B_0$ then the contraction $\Psi \slash \{v\}$ is in QI. 
\item[iii.] If $v$ is the largest non-coloop vertex of $\Psi$, then $\Psi\backslash \{v\}$ is in QI. 
\item[iv.] If $0\le k \le \rk(\Psi)$ then the skeleton $\sk_k(\Psi)$ is in QI. 
\item [v.] If $\rk(\Psi) = \rk(\Psi')$ then the connected sum $\Psi \#_{\varphi} \Psi'$ is in QI. 
\end{compactenum}
Parts [i.], [ii.], and [iii.] imply that QI is a quasi-matroidal subclass of PURE. 
\end{thm}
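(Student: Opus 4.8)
The plan is to treat all five constructions by a single two-step template: first check that the new complex is pure, and then verify the Quasi-independence axiom for it by pushing an arbitrary instance of that axiom down to the axiom (QI) already known for $\Psi$ (and, in parts (i) and (v), for $\Psi'$). The one substantive input is a description of how the distinguished basis $B_{I,0}$ — the smallest lexicographic basis of the contraction $\Psi\backslash I$ — transforms under each operation; every such fact follows from the greedy description of the smallest lexicographic basis (at each stage take the least vertex still lying in a basis). Concretely: a contraction of a join is the join of the contractions of the factors and the smallest lexicographic basis of a join is the shuffled disjoint union of the smallest lexicographic bases of the factors, so $B_{I,0}$ of a join splits as $j(B_{I^{(1)},0})\sqcup j'(B_{I^{(2)},0})$ where $I=j(I^{(1)})\sqcup j'(I^{(2)})$; for skeleta, $\sk_k(\Psi)\backslash I=\sk_{k-|I|}(\Psi\backslash I)$ and the smallest lexicographic basis of a rank-$m$ skeleton is the initial segment of length $m$ of the ambient one, so $B_{I,0}$ of $\sk_k(\Psi)$ is an initial segment of $B_{I,0}$; for the contraction of a single vertex, $(\Psi\backslash\{v\})\backslash I=\Psi\backslash(I\cup\{v\})$, so the new $B_{I,0}$ equals $B_{I\cup\{v\},0}$; and for the deletion of a vertex $v$, the new $B_{I,0}$ is the smallest lexicographic basis of $\Psi\backslash I$ among those avoiding $v$, which coincides with $B_{I,0}$ exactly when $v\notin B_{I,0}$.

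With these in hand the ``rigid'' parts go quickly. For (i): purity is immediate since every basis of the join has cardinality $\rk\Psi+\rk\Psi'$; given an instance $|I_1|>|I_2|$, $I_1\backslash I\subseteq B_{I,0}$, $I\subseteq I_1\cap I_2$ in the join, split $I_1,I_2,I$ along the shuffle — one factor-side has $|I_1^{(\bullet)}|>|I_2^{(\bullet)}|$, the split of $B_{I,0}$ turns the containment into the corresponding one in that factor, and (QI) for that factor produces the required new element. For (ii): purity of $\link_\Delta(v)$ is automatic; given $I_1,I_2\in\link_\Delta(v)$ with $|I_1|>|I_2|$ and $I_1\backslash I\subseteq B_{I\cup\{v\},0}$, apply (QI) for $\Psi$ to $I_1\cup\{v\}$ and $I_2\cup\{v\}$ with witness $I\cup\{v\}$ — the hypothesis handed to us is exactly the one needed — and the element returned lies in $I_1\backslash I_2$ and extends $I_2$ inside the link. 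For (iv): the bases of $\sk_k(\Psi)$ are precisely the rank-$k$ independent sets of $\Delta$, hence all of size $k$; an instance of the axiom for the skeleton gives a containment in $B_{I,0}^{\sk_k(\Psi)}$, which by the initial-segment fact is also a containment in $B_{I,0}$, so (QI) for $\Psi$ returns $e\in I_1\backslash I_2$ with $I_2\cup\{e\}\in\Delta$, and $|I_2\cup\{e\}|\le|I_1|\le k$ keeps it in the skeleton.

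For the deletion part (iii) (and the analogous half of (v)) the plan is to first establish purity of $\Psi\slash\{v\}$ directly from (QI): a maximal independent set $M$ of $\Psi\slash\{v\}$ of rank below $\rk\Psi$ would contradict (QI) applied to $M$ against a full-rank basis with the empty witness, where one uses that $v$ is the largest non-coloop vertex to guarantee the relevant smallest lexicographic basis avoids $v$. Granting purity, the axiom transfer splits into the easy case $v\notin B_{I,0}$ — where $B_{I,0}$ is unchanged by the deletion and one argues as in (iv), noting the element returned by (QI) for $\Psi$ is $\neq v$ since it lies in $I_1$, which avoids $v$ — and the case $v\in B_{I,0}$, where one must again exploit the maximality of $v$ to relate $B_{I,0}$ to its $v$-avoiding analogue closely enough (if necessary after enlarging the witness) for (QI) to fire. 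The connected sum (v) combines the splitting argument of (i) with this deletion-type analysis around the identified basis $B_0=B_0'$. Finally (i)--(iii) give QI $\subseteq$ PURE by induction on the number of bases: a one-basis complex is in PURE by definition, and otherwise the deletion at the largest non-coloop vertex (part (iii)) and every contraction (iterate part (ii) over the vertices of the independent set) remain in QI of the appropriate rank, hence lie in PURE by induction, so $\Psi$ is in PURE; since QI also contains all ordered matroids and all pure shifted complexes and satisfies condition~(2) of Definition~\ref{def:quasicrypt} via the inclusion QI $\subseteq$ PURE together with the fact that PURE is quasi-matroidal, QI is a quasi-matroidal subclass of PURE.

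The hard part will be (iii). For joins, skeleta and vertex-contractions the basis $B_{I,0}$ transforms in a completely formula-driven way, so those reductions are essentially bookkeeping; but deleting the largest vertex can genuinely move the smallest lexicographic basis, so both the purity of $\Psi\slash\{v\}$ and the (QI) transfer in the case $v\in B_{I,0}$ rest on a delicate argument that ``$v$ is the largest non-coloop vertex'' rigidly constrains the difference between $B_{I,0}$ and its $v$-avoiding counterpart. Pinning down that comparison — and the parallel point for the connected sum near $B_0=B_0'$ — is where the real work lies.
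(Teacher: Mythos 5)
Parts (i), (ii), and (iv) of your proposal are correct and coincide with the paper's argument: joins commute with links and the first lexicographic basis of a join splits factorwise, the contraction case transfers by adjoining $v$ to $I_1$, $I_2$ and the witness, and the skeleton case uses $\link_{\sk_k\Delta}(I)=\sk_{k-|I|}(\link_{\Delta}(I))$ together with the initial-segment description of the first basis of the skeleton.

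The genuine gap is the step you yourself flag and postpone in part (iii), on which your purity argument, your ``hard case'' $v\in B_{I,0}$, and your treatment of (v) all depend. The missing idea is that the hard case is vacuous: if $v$ is the largest non-coloop vertex and $I$ is any independent set with $v\notin I$, then $v\notin B_{I,0}$, so deleting $v$ changes neither the maximal faces through such an $I$ nor any of the distinguished bases appearing in the axiom. This is not bookkeeping; the paper proves it by a second, less obvious application of QI. Suppose $v\in B_{I,0}$ and set $B:=(I\cup B_{I,0})\setminus\{v\}$, a set of size $d-1$. Apply QI to the pair $B_0$, $B$ with empty witness (admissible because $B_{\emptyset,0}=B_0$) to obtain $u\in B_0\setminus B$ with $B\cup\{u\}$ independent. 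Since $u$ avoids the basis $I\cup B_{I,0}$ it is not a coloop, hence $u<v$ by the maximality of $v$; then $(B\cup\{u\})\setminus I=(B_{I,0}\setminus\{v\})\cup\{u\}$ is a basis of the contraction of $I$ that is lexicographically smaller than $B_{I,0}$, a contradiction. Your alternative plan of ``relating $B_{I,0}$ to its $v$-avoiding analogue, enlarging the witness if necessary'' heads toward a comparison that never has to be made. Finally, part (v) is not a deletion-type analysis in the paper: if $I_1$ and $I_2$ lie in different summands, then $I\subseteq I_1\cap I_2\subseteq B_0=B_0'$, so $B_{I,0}=B_0\setminus I$, and the hypothesis $I_1\setminus I\subseteq B_{I,0}$ forces $I_1\subseteq B_0=B_0'$; hence the whole instance lives in a single summand and QI for that summand applies. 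You should supply this (or an equivalent) argument explicitly rather than gesture at it.
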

\begin{proof}
Part [i.] follows from the fact that joins preserve purity and commute with links, i.e., if $\Delta_1$ and $\Delta_2$ are complexes and $I_1, I_2$ are faces of $\Delta_1$ and $\Delta_2$ then $\link_{\Delta_1*\Delta_2}(I_1\cup I_2) = \link_{\Delta_1}(I_1)* \link_{\Delta_2}(I_2)$. 

%For [ii.] notice that if $I$ is an independent in $\Psi|_A$, it is possible to apply the QI axiom with $B_0\cap A$ and $I$, and extend $I$ to an independent $B$ of $\Psi_{}.  

For [ii.] notice that if $I_1$ , $I_2$ and $I\subset I_1\cap I_2$ satisfy the hypotheses of the axiom in $\link_\Delta(v)$, then $I_1\cup \{v\}, I_{2}\cup \{v\}$ and $I\cup \{v\}$ satisfy the hypotheses in $\Psi$. We may therefore use the QI axiom in $\Psi$ to obtain the result. 

To prove part [iii.]  notice that $I$ is an independent set in $\Psi$ that does not contain $v$ and then $B_{I,0}$ does not contain $v$. Indeed if $v \in B_{I,0}$, let $B = (I\cup B_{I,0}) \backslash \{v\}$ and use the QI with $B$ and $B_0$, i.e, there is $u\in B_0\backslash B$ such that $B\cup\{u\}$ is independent. The vertex $u$ is not a coloop of $\Psi$ (it does not belong to the basis $B_{I,0}\cup I$) and is therefore smaller than $v$. It follows that $(B\cup\{u\}) \backslash I$ is a basis of $\Psi\slash I$ smaller in lexicographic order than $B_{I,0}$, a contradiction. 

Part [iv.]  follows from the equality $\link_{\sk_k\Delta} I = \sk_{k-|I|}(\link_{\Delta} I)$ and the fact that the smallest lexicographic facet of $\sk_{k}\Psi$ is the smallest lexicographic $k$-face of $B_0$. 

For part [v.] assume that $I_1, I_2$, and $I\subseteq I_1\cap I_2$ satisfy the hypotheses in the connected sum. If $I_1$ and $I_2$ are independent in $\Psi$ and $\Psi'$, respectively, then $I_1\cap I_2$ is a subset of $B_0 = B_0'$ (under the natural identification), and so $B_{I,0} = B_0 \backslash I = B_0'\backslash I$. Hence $I_1 \subseteq B_0 = B_0'$ and we may apply the axiom for $\Psi'$. If both independent sets come from the same $\Psi$ or $\Psi'$, then the QI axiom can be applied as coming from $\Psi$ or $\Psi'$. 

\end{proof}

The following theorem shows that QI refines PURE in a special sense: a larger family of induced subcomplexes are pure for complexes in QI.

\begin{thm} Let $\Psi= (E, \Delta)$ be a complex in QI and let $A\subseteq E$ be a subset such that $\rk(A) = |B_0\cap A|$. Then $\Psi|_A$ is pure. 
\end{thm}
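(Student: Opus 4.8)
The plan is to argue by contradiction, extracting from a deficient basis of $\Psi|_A$ a violation of the quasi-independence axiom of $\Psi$. First I would set up the standard dictionary: the bases of $\Psi|_A = (A, \Delta|_A)$ are exactly the inclusion-maximal members of $\{I \in \Delta : I \subseteq A\}$, the rank of $\Psi|_A$ equals $\rk(A)$, and purity of $\Psi|_A$ is precisely the statement that every such maximal independent set has cardinality $\rk(A)$.

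So suppose $\Psi|_A$ fails to be pure, and fix a basis $I_2$ of $\Psi|_A$ with $|I_2| < \rk(A)$. The natural candidate for the ``large'' independent set is $I_1 := B_0 \cap A$: it is independent, being a subset of the basis $B_0$; it is contained in $A$; and by the hypothesis $\rk(A) = |B_0 \cap A|$ we get $|I_1| = \rk(A) > |I_2|$.

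The key observation is that the quasi-independence axiom applies to the pair $(I_1, I_2)$ with witness $I = \emptyset$. Indeed $\emptyset \subseteq I_1 \cap I_2$; the contraction $\Psi \backslash \emptyset$ is $\Psi$ itself, so $B_{\emptyset, 0} = B_0$; and $I_1 \backslash \emptyset = I_1 = B_0 \cap A \subseteq B_0 = B_{\emptyset, 0}$. Hence QI produces an element $e \in I_1 \backslash I_2$ with $I_2 \cup \{e\}$ independent. But $e \in I_1 \subseteq A$, so $I_2 \cup \{e\}$ is an independent set contained in $A$ that strictly contains $I_2$, contradicting the maximality of $I_2$ as a basis of $\Psi|_A$. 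This contradiction shows every basis of $\Psi|_A$ has size $\rk(A)$, i.e. $\Psi|_A$ is pure.

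I do not expect a genuine obstacle here; the only point demanding care is the choice of witness, and it is exactly the hypothesis $\rk(A) = |B_0 \cap A|$ that makes $I_1 = B_0 \cap A$ a maximum-size independent set inside $A$, which is what guarantees $|I_1| > |I_2|$ whenever purity fails. (One should also double-check the degenerate cases, e.g. $A = \emptyset$, but these are immediate.)
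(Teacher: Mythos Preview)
Your proof is correct and follows essentially the same approach as the paper: both arguments apply the QI axiom with $I_1 = B_0 \cap A$ and witness $I = \emptyset$, using that $I_1 \subseteq B_0 = B_{\emptyset,0}$. The only cosmetic difference is that the paper phrases it directly (extend any independent subset of $A$ up to size $|B_0 \cap A|$) while you phrase it as a contradiction (a deficient basis of $\Psi|_A$ can be extended).
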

\begin{proof}Notice that if $I$ is an independent set in $\Psi|_A$, it is possible to apply the QI axiom with $B_0\cap A$ and $I$ to extend $I$ to an independent set $B$ of $\Psi|_{A}$. Then $|B| = |B_0\cap A| = \rk(A)$. It follows that $B$ is a basis of $\Psi_{A}$ and this implies purity. 
\end{proof}

\subsection{The Exchange axiom}\label{ssct:exch} 
The theory of shellability of simplicial complexes is best studied in the language of facets and it is natural that it should follow from conditions on the set of bases of a simplicial complex. It turns out that QE is a suitable class to apply this technology. 

\begin{thm}\label{thm:QEconst} Let $\Psi= (E, \Delta)$ and $\Psi'=(E', \Delta')$ be ordered complexes in QE. %Let $v$ be a vertex of $\Psi$, let $U \subset E$ be any subset such that $\Psi|_U$ is pure and of the same rank as $\Psi$, let $A \subset E$ be a set that contains $B_0$ and all elements that are smaller than or equal to the largest vertex of $B_0$ that is not a coloop; finally, let $s(E,E')$ be a shuffle of $E$ and $E'$. The following complexes are in QE:
\begin{compactenum}
\item[i.] If $s(E, E')$ is a shuffle of $E$ and $E'$ then the join $\Psi*_{s(E, E')} \Psi'$ is in QE.
\item[ii.] If $U \subset E$ is any subset such that $\Psi|_U$ is pure and of the same rank as $\Psi$, then the restriction $\Psi|_U$ is in QE. 
\item[iii.] If $A\subseteq$ contains $B_0$ and all the elements smaller than the largest non-coloop of $B_0$, then the restriction $\Psi|_A$ is in QE. 
\item[iv.] If $v$ is a vertex of $\Psi$ then the contraction $\Psi \slash v$ is in QE. 
\end{compactenum}
In particular, QE is a quasi-matroidal sublclass of PURE.
\end{thm}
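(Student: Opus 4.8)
\emph{Proof proposal.}
The plan is to obtain parts [i.], [ii.], and [iv.] by the routine device of transporting the QE axiom from a smaller complex, to reduce [iii.] to a purity statement and then attack that directly with the exchange axiom, and to deduce the final assertion by induction. For [i.], the bases of $\Psi *_{s}\Psi'$ are the sets $j(B)\cup j'(B')$ with $B\in\B$ and $B'\in\B'$, so purity is clear; if $b_1$ lies in the symmetric difference of two such bases and exceeds the rest of it, then $b_1$ lies in the image of exactly one of the two ground sets, say $E$, where it exceeds everything in the symmetric difference of the $E$-parts, so QE for $\Psi$ supplies a replacement which, re-joined with the common $E'$-part, is a basis of the join. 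For [ii.], since $\Psi|_U$ is assumed pure of rank $d$ its bases are exactly the bases of $\Psi$ contained in $U$, and applying QE for $\Psi$ to two of them yields a basis $(B_1\setminus\{b_1\})\cup\{b_2\}$ with $b_2\in B_2\subseteq U$ and $B_1\setminus\{b_1\}\subseteq U$. For [iv.] I would imitate the proof of Theorem~\ref{thm:QIconst}[ii.]: the bases of $\Psi\slash v$ are the sets $B\setminus\{v\}$ with $B\in\B$ and $v\in B$ (so $\Psi\slash v$ is pure of rank $d-1$), and adjoining $v$ to both bases of such a pair leaves their symmetric difference unchanged and cannot produce $v$ as the exchanged element, so QE for $\Psi$ applied to $B_1\cup\{v\}$ and $B_2\cup\{v\}$ transfers verbatim.

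The real content is [iii.]. Since every coloop of $\Psi$ lies in $B_0$, the hypothesis says precisely that $A$ contains every element $\le v_0$ together with every coloop. It is enough to show that $\Psi|_A$ is pure of rank $d$: then its bases are the bases of $\Psi$ lying in $A$, and QE for $\Psi|_A$ follows from [ii.] with $U=A$. For purity I would prove that every independent $I\subseteq A$ extends to a basis contained in $A$, in the sharp form that the lexicographically least basis $B_{I,0}$ through $I$ already lies in $A$. If not, let $e$ be the largest element of $B_{I,0}\setminus A$; then $e>v_0$, $e\notin B_0$ (so $e$ is not a coloop), and $e\notin I$. The elements of $B_0$ above $v_0$ are coloops and hence lie in $B_{I,0}$, so no element of $B_0\setminus B_{I,0}$ exceeds $v_0$; therefore $\max(B_{I,0}\triangle B_0)$ is attained in $B_{I,0}\setminus B_0$, is $\ge e>v_0\ge\max(B_0\setminus B_{I,0})$, and the QE axiom applies to the pair $(B_{I,0},B_0)$ with this element as $b_1$. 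It returns $b_2\in B_0\setminus B_{I,0}$ with $b_2\le v_0<b_1$, so $B':=(B_{I,0}\setminus\{b_1\})\cup\{b_2\}$ is a basis lexicographically below $B_{I,0}$; if $b_1\notin I$ then $B'\supseteq I$, contradicting minimality of $B_{I,0}$.

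The step I expect to be the main obstacle is the remaining case $b_1\in I$ (which forces $b_1\in A$ and $b_1>e$), where the exchange would evict an element of $I$. I would recast the argument as an induction: pick a basis $B\supseteq I$ that first minimizes the largest element of $B\setminus A$ and, among those, is lexicographically least, then run the same exchange against $B_0$; when the evicted element equals $e$ it lies outside $I$ and the largest element of $B\setminus A$ strictly drops, while when it is a larger element of $A$ the lexicographic rank of $B$ strictly drops with $B\setminus A$ unchanged, so the process terminates at a basis $\supseteq I$ inside $A$ --- provided one can rule out that the evicted element ever lies in $I$. Establishing this, i.e., that QE forbids the configuration $\max(B\triangle B_0)\in I$ while $B_{I,0}\not\subseteq A$, is the heart of [iii.]; I expect it to come from one more application of QE comparing $B$ with $B_{I,0}$, whose symmetric difference with $B$ avoids $I$ altogether, forcing $\max(B\triangle B_0)$ into $B\setminus B_{I,0}$ and hence making it exchangeable without disturbing $I$.

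For the last sentence, parts [i.] and [iv.] give closure under joins and contractions; for the deletion of the largest element $w$ of $E$, if $w$ is not a coloop then comparing $B_0$ with a basis avoiding $w$ via QE shows $w\notin B_0$, so $E\setminus\{w\}$ satisfies the hypothesis of [iii.] (the coloop case being vacuous). Hence QE is quasi-matroidal, and if $\Psi$ belongs to QE then $\Psi$ belongs to PURE by induction on the number of bases, applying these same closure properties at each step of the recursive definition of PURE.
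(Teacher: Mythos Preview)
Your arguments for [i.], [ii.], and [iv.] are correct and essentially identical to the paper's. Your reduction of [iii.] to purity is also correct and is what the paper does.

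The ``main obstacle'' you worry about in [iii.], however, is self-inflicted. The QE axiom does \emph{not} require $b_1=\max(B_1\triangle B_2)$; it only requires $b_1\in B_1\setminus B_2$ with $b_1>\max(B_2\setminus B_1)$. You yourself establish that every element of $B_0\setminus B_{I,0}$ is a non-coloop of $B_0$ and hence $\le v_0$, while every element of $B_{I,0}\setminus A$ is $>v_0$. So for any basis $B\supseteq I$ and any $b\in B\setminus A$ you have $b\in B\setminus B_0$ and $b>\max(B_0\setminus B)$, whence QE applied to $(B,B_0)$ with $b_1=b$ returns a basis $(B\setminus\{b\})\cup\{b_2\}$. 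Since $b\notin A\supseteq I$, this new basis still contains $I$ and has strictly fewer elements outside $A$. Iterating finishes the purity proof in one line. This is exactly the paper's argument: ``By QE applied to $B$ and $B_0$ it is possible to replace all vertices in $B\setminus A$ with elements of $B_0$.''

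Your proposed workaround---minimizing $\max(B\setminus A)$ and then invoking a further comparison with $B_{I,0}$ to rule out $\max(B\triangle B_0)\in I$---is left as a hope (``I expect it to come from\ldots'') and it is not clear that the comparison with $B_{I,0}$ yields what you need, since the symmetric difference $B\triangle B_{I,0}$ avoiding $I$ says nothing directly about where $\max(B\triangle B_0)$ sits. But the point is moot: choose $b_1\in B\setminus A$ rather than $b_1=\max(B\triangle B_0)$ and the obstacle evaporates.

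Your final paragraph is fine; note only that for the PURE recursion you must delete the largest non-coloop \emph{vertex} $v$, which need not be $\max E$, but the same QE-against-$B_0$ argument you give shows $v\notin B_0$, and since $v_0<v$ the hypothesis of [iii.] is met.
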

\begin{proof}
Part [i.] follows from the fact that joins preserve purity, and bases in the join correspond to pairs of bases, coming from each complex. Then the QE axiom applies to either basis for each element that can be switched. 

For [ii.] notice that if $B_1$ and $B_2$ are bases of $\Psi|_U$ then applying the QE axiom with $B_1$ and $B_2$ produces bases of $\Psi|_U$. 

For [iii.] it suffices to show that $\Psi|_A$ is pure. Let $I$ be a face of $\Psi|_A$ and let $B$ be a basis that contains $F$. By QE applied to $B$ and $B_0$ it is possible to replace all vertices in $B\backslash A$ with elements of $B_0$, since $\min E-A > \max B_0$. The resulting basis is contained in $A$ and it still contains $F$. It follows that $\Psi|_A$ is pure. 

For [iv.] notice that if $B_1$ and $B_2$ are bases of $\Psi\slash v$ then $B_1\cup\{v\}$ and $B_2\cup\{v\}$ are bases of $\Psi$ and the QE axiom applies. Notice that in this case $v$ is irrelevant since it belongs to both bases, thus the QE axiom holds in $\Psi\slash v$. 
\end{proof}

The QE axiom is pretty rich from the perspective of combinatorial topology in particular, by using shellability. While Theorem~\ref{thm:PURE} shows that $\Psi$ is shellable, the shelling orders provided by the vertex decomposition are obtained recursively, and consequently, the restriction sets are difficult to study. As the generating function of the restriction is equal to the $h$-polynomial of $\Psi$, it is desirable to have shelling with restriction sets that are easier to compute directly. Complexes in QE guarantees that this holds.

\begin{thm}\label{thm:lexShell} Let $\Psi = (E, \Delta)$ be a complex in QE. The lexicographic order on $\B$ is a shelling order. Under this shelling order $\RR(B) = IP(B)$.
\end{thm}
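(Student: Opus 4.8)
The plan is to verify directly that the lexicographic order $B_1 <_{\lx} B_2 <_{\lx} \cdots <_{\lx} B_k$ on the bases of $\Psi$ satisfies the shelling condition, and simultaneously identify the restriction sets with the internally passive sets. Recall that for a shellable complex the restriction set $\RR(B_j)$ is the unique minimal subset of $B_j$ not contained in any earlier basis $B_i$; so the two assertions can be proved together. First I would fix a basis $B$ and describe $\RR(B)$ explicitly: I claim that a vertex $b \in B$ lies in $\RR(B)$ if and only if $b \in IP(B)$, i.e., if and only if there is some $b' < b$ with $b' \notin B$ and $(B \setminus \{b\}) \cup \{b'\} \in \B$. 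The key point is that $B \setminus IA(B)$ \emph{is} contained in no lexicographically earlier basis, while any proper subset of it is.

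The heart of the argument uses the QE axiom to control lexicographic comparisons. Suppose $B_i <_{\lx} B = B_j$. I want to produce a single vertex $b \in IP(B)$ (equivalently $b \in \RR(B)$ if the claim holds) together with a basis $B_\ell \le_{\lx} B$ with $B_i \cap B \subseteq B_\ell \cap B = B \setminus \{b\}$. Consider $b_1 := \max(B \setminus B_i)$. Since $B_i <_{\lx} B$, a standard fact about lexicographic order on equal-size sets gives $b_1 > \max(B_i \setminus B)$: indeed, the largest element of the symmetric difference $B \triangle B_i$ must belong to $B$ (otherwise $B <_{\lx} B_i$), and that element is exactly $b_1$. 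Now apply QE to the pair $B$, $B_i$ with the element $b_1 \in B \setminus B_i$, which by the above satisfies $b_1 > \max(B_i \setminus B)$: there is $b_2 \in B_i \setminus B$ with $B_\ell := (B \setminus \{b_1\}) \cup \{b_2\} \in \B$. Since $b_2 < b_1$, we get $B_\ell <_{\lx} B$, so $B_\ell$ appears before $B$ in the shelling order, and clearly $B_i \cap B \subseteq B \setminus \{b_1\} = B_\ell \cap B$, which is the shelling condition with $b = b_1$. Moreover $b_1 \in IP(B)$ because $b_2 < b_1$ witnesses internal passivity. This shows $\RR(B) \subseteq IP(B)$, since $\RR(B)$ is the minimal set hitting all earlier bases and we have just shown $IP(B)$ works as well as the fact that the obstruction element at each $B_i$ lies in $IP(B)$; more precisely, every $b \in \RR(B)$ arises as such a $b_1$ for some earlier basis (take $B_i$ to be an earlier basis avoiding $b$), hence lies in $IP(B)$.

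For the reverse inclusion $IP(B) \subseteq \RR(B)$, take $b \in IP(B)$, so there is $b' < b$, $b' \notin B$, with $B' := (B \setminus \{b\}) \cup \{b'\} \in \B$; then $B' <_{\lx} B$ and $B \setminus \{b\} \subseteq B'$, so $b$ must belong to $\RR(B)$ — otherwise $\RR(B) \subseteq B \setminus \{b\} \subseteq B'$ would contradict $\RR(B)$ not being contained in any earlier basis. Combining the two inclusions gives $\RR(B) = IP(B)$, and the shelling condition verified above shows the lexicographic order is indeed a shelling order. The main obstacle I anticipate is the clean bookkeeping of the lexicographic-order facts — specifically the claim that $\max(B \triangle B_i) \in B$ exactly when $B_i <_{\lx} B$, and making sure the element $b_1$ produced when testing against different earlier bases $B_i$ is always forced to lie in a \emph{fixed} minimal set; I would handle this by proving the single clean statement "$b \in \RR(B) \iff b \in IP(B)$" first and deriving the shelling property as a consequence, rather than juggling both simultaneously.
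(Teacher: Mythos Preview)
Your argument has a genuine gap: the ``standard fact'' you invoke is false. It is \emph{not} true that $B_i <_{\lx} B$ implies $\max(B \triangle B_i) \in B$. What is true is that $B_i <_{\lx} B$ iff $\min(B \triangle B_i) \in B_i$; the condition $\max(B \triangle B_i) \in B$ characterises the \emph{colexicographic} order, not the lexicographic one. For a concrete counterexample inside a QE complex, take the uniform matroid $U_{2,5}$ with $B_i = \{1,5\}$ and $B = \{2,3\}$: then $B_i <_{\lx} B$, but $\max(B \triangle B_i) = 5 \in B_i$, so your $b_1 = \max(B \setminus B_i) = 3$ does \emph{not} satisfy the hypothesis $b_1 > \max(B_i \setminus B) = 5$ needed to apply QE. You even flagged this as the anticipated obstacle, and indeed it is fatal to the direct one-step argument.

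The paper's proof repairs exactly this point by an induction on $r - |B_1 \cap B_2|$. When $\max(B_1 \triangle B_2) \in B_2$ (your situation), QE applies directly and gives the shelling witness in one step. When instead $\max(B_1 \triangle B_2) \in B_1$, the paper applies QE to $B_1$ (not $B_2$) to produce a new basis $B_1'$ with $B_1 \cap B_2 \subsetneq B_1' \cap B_2$, and then invokes the inductive hypothesis on the pair $(B_1', B_2)$. So the missing idea is that one may have to iterate QE, shrinking the symmetric difference, before the exchange can be performed on the correct side. Once the shelling is established, your argument for $\RR(B) = IP(B)$ is essentially fine and matches the paper's (which defers to Bj\"orner for this step).
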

\begin{proof} Let $B_1 <_{lex} B_2$ be bases of $\Psi$. The goal is to find a basis $B_3 <_{lex} B_2$ such that $B_1\cap B_3 \subseteq B_2\cap B_3 = B_2\backslash \{e\}$. The proof is by induction on $r-|B_1\cap B_2|$. The base case is trivial. There are two cases to consider: 
\begin{compactenum}
\item[Case 1.] The element $b:=\max B_1\triangle B_2$ is an element of $B_2$. By QE there is $b'\in B_1 \backslash B_2$ such that $B_3 = (B_2\backslash\{b\})\cup\{b'\}$ is a basis. That choice of $B_3$ works directly. 
\item[Case 2.] The element $b:=\max B_1 \triangle B_2$ is an element of $B_1$. By the QE axiom there is $b' \in B_2 \backslash B_1$ such that $B_1' = (B_1\backslash\{b\})\cup\{b'\}$ is a basis. Then $B_1\cap B_2 \subset B_1'\cap B_2$ and by inductive hypothesis, there exists $B_3$ such that $B_1'\cap B_2 \subseteq B_2\cap B_3 = B_2 \backslash\{e\}$, and so such $B_3$ does the job. 
\end{compactenum}

The second statement follows directly from the definition of the restriction set exactly as in \cite{MR1165544}.
\end{proof}

\begin{rem} Theorem~\ref{thm:lexShell} shows that the class QE is contained in LEX and is much smaller in general. Graphs are rank-two simplicial complexes and shellable just means that they are connected. Given any connected graph $\Delta$ with with vertex set $[n]$ (viewed as a rank-two simplicial complex) such that the graph $\Delta|_{[r]}$ is connected for every $1\le r \le n$, we get that the complex $([n], \Delta)$ is an element of LEX. However a substantial proportion of such graphs in not in QE. Therefore, QE is significantly smaller than LEX even if we just compare rank-two complexes.\end{rem}

The following corollary is an immediate consequence of Theorem~\ref{thm:lexShell}. It generalizes the result of Bj\"orner \cite{MR1165544} and, as we will see in Section~\ref{sct:Stanley}, it becomes more powerful when studied in this context. 

\begin{cor}\label{cor:QEh-poly} If $\Psi = (E, \Delta)$ is in LEX (or in QE), then \begin{equation} h(\Psi, x) = \sum_{B\in \B} x^{|IP(B)|} \end{equation}
\end{cor}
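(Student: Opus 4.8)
The plan is to derive Corollary~\ref{cor:QEh-poly} directly from Theorem~\ref{thm:lexShell} together with the general formula for the $h$-polynomial of a shellable complex recorded in Section~\ref{sct:prelim}, namely $h(\Psi, x) = \sum_{j=1}^k x^{|\RR(B_j)|}$ for any shelling order $B_1, \dots, B_k$. The observation that makes the corollary immediate is that this formula is \emph{order-independent} in the following sense: although the restriction sets $\RR(B_j)$ are defined relative to a fixed shelling order, the resulting multiset $\{|\RR(B_j)|\}$ (equivalently, the polynomial $\sum_j x^{|\RR(B_j)|}$) always equals $h(\Psi, x)$, since $h(\Psi, x)$ does not depend on which shelling order we picked.

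The argument I would write is essentially one paragraph. First, observe that the hypothesis ``$\Psi \in$ LEX or $\Psi \in$ QE'' in either case guarantees that the lexicographic order on $\B$ is a shelling order of $\Psi$: for QE this is precisely the first assertion of Theorem~\ref{thm:lexShell}, and for LEX it is the defining property of the class (Example~\ref{ex:shell}). Second, apply the shelling $h$-polynomial formula to this particular shelling order, obtaining $h(\Psi, x) = \sum_{B \in \B} x^{|\RR(B)|}$, where $\RR(B)$ is the restriction set of $B$ with respect to the lexicographic order. Third, identify $\RR(B)$ with $IP(B)$: for complexes in QE this is the second assertion of Theorem~\ref{thm:lexShell}, and for LEX it is the content of Bj\"orner's identification \cite{MR1165544} of the restriction set of a lexicographically-shelled basis with its set of internally passive elements. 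Substituting $|\RR(B)| = |IP(B)|$ into the sum yields the claimed equality $h(\Psi, x) = \sum_{B \in \B} x^{|IP(B)|}$.

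There is essentially no obstacle here: the corollary is a bookkeeping consequence of the two statements in Theorem~\ref{thm:lexShell} plus the standard shelling-to-$h$-vector dictionary, which is exactly why it is phrased as a corollary. The only point that warrants a sentence of care is the LEX case, where one must invoke Bj\"orner's $\RR(B) = IP(B)$ identification rather than Theorem~\ref{thm:lexShell} (whose second assertion is stated for QE); but since the definition of $IP(B)$ via the lexicographic order is literally Bj\"orner's, and LEX complexes are by definition shellable in lexicographic order, the same proof of the identity $\RR(B) = IP(B)$ goes through verbatim. So the write-up is: ``In either case the lexicographic order is a shelling; by Theorem~\ref{thm:lexShell} (resp.\ \cite{MR1165544}) its restriction sets are the sets $IP(B)$; now apply the shelling formula for $h(\Psi, x)$.''
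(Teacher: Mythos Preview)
Your proposal is correct and matches the paper's approach exactly: the paper states this corollary as ``an immediate consequence of Theorem~\ref{thm:lexShell}'' without further proof, and the three-step argument you spell out (lex order is a shelling; $\RR(B)=IP(B)$ by Theorem~\ref{thm:lexShell} or Bj\"orner; apply the shelling $h$-polynomial formula from Section~\ref{sct:prelim}) is precisely the intended reading. Your extra sentence handling the LEX case via Bj\"orner's identification is appropriate, since the paper's Theorem~\ref{thm:lexShell} is stated for QE while the corollary also covers LEX.
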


There is another remarkable way to obtain complexes in QE from older ones which is less conventional in combinatorial topology. It implies that the partial steps of the construction of a complex in QE using the shelling of \ref{thm:lexShell} are complexes in QE. 

\begin{thm}\label{thm:QEGale} Let $\Psi = (E, \Delta)$ be a complex in QE and let $\J$ be an order ideal of $\Gale(\Psi)$. Then $\Psi[\J]$ is in QE. Furthermore, for every basis $B$ of $\Psi[\J]$, the equality $IP(B, \Psi[\J]) = IP(B, \Psi)$ holds. 
\end{thm}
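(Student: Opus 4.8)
The plan is to reduce the whole statement to one elementary monotonicity observation about the Gale order, and then apply it twice. First I would record the following lemma: if $B$ is a $d$-element subset of $E$, $b\in B$, and $b'\notin B$ with $b'<b$, then $B':=(B\setminus\{b\})\cup\{b'\}$ satisfies $B'\le_{\Gale}B$. This is immediate from writing both sets in increasing order: passing from $B$ to $B'$ deletes $b$, which lies at some position $j$, and inserts $b'$ at some position $i\le j$; coordinates $1,\dots,i-1$ and $j+1,\dots,d$ are unchanged, the $i$-th coordinate drops from the old value to $b'$, and coordinates $i+1,\dots,j$ each get replaced by the previous (hence smaller) element of $B$. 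So $B'$ is coordinatewise $\le B$.

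Next I would check that $\Psi[\J]$ is a well-defined pure ordered complex. Every element of $\J$ is a basis of $\Psi$, hence has cardinality $d=\rk(\Psi)$; since all faces of $\Delta[\J]$ are subsets of elements of $\J$, they have cardinality at most $d$, so the members of $\J$ are exactly the maximal faces of $\Delta[\J]$ and $\Psi[\J]$ is pure of rank $d$ (the case $\J=\emptyset$ being vacuous). In particular a $d$-element subset of $E$ lies in $\Delta[\J]$ if and only if it is an element of $\J$ — the fact I will use to translate "$\,\in\Delta[\J]\,$" into "$\,\in\J\,$".

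Now for the QE axiom. Let $B_1,B_2$ be bases of $\Psi[\J]$, i.e.\ $B_1,B_2\in\J\subseteq\B(\Psi)$, and suppose $b_1\in B_1\setminus B_2$ with $b_1>\max (B_2\setminus B_1)$. Since $\Psi$ is in QE, there is $b_2\in B_2\setminus B_1$ with $B_1':=(B_1\setminus\{b_1\})\cup\{b_2\}\in\B(\Psi)$. As $b_2\le\max(B_2\setminus B_1)<b_1$ and $b_2\notin B_1$, the lemma gives $B_1'\le_{\Gale}B_1$; since $\J$ is an order ideal containing $B_1$, we get $B_1'\in\J$, so $B_1'\in\Delta[\J]$. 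Hence $\Psi[\J]$ satisfies QE. For the internal-passivity equality I would use the order-free description $IP(B,\Psi)=\{b\in B:\exists\,b'<b,\ b'\notin B,\ (B\setminus\{b\})\cup\{b'\}\in\B(\Psi)\}$ (and the analogous statement for $\Psi[\J]$, with $\J$ in place of $\B(\Psi)$). The inclusion $IP(B,\Psi[\J])\subseteq IP(B,\Psi)$ is clear because $\J\subseteq\B(\Psi)$. Conversely, if $b\in IP(B,\Psi)$ with witness $b'$, then $(B\setminus\{b\})\cup\{b'\}\le_{\Gale}B$ by the lemma, so it lies in $\J$ because $B\in\J$ and $\J$ is an order ideal; thus $b'$ witnesses $b\in IP(B,\Psi[\J])$, and the two sets coincide.

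None of the steps is genuinely hard; the only point demanding a little care is getting the index bookkeeping right in the monotonicity lemma (where exactly $b'$ is reinserted), together with the routine identification of $d$-element faces of $\Delta[\J]$ with members of $\J$. So I do not anticipate a real obstacle — the content of the theorem is essentially that "lowering one element of a basis moves it down in the Gale order," which makes both the QE axiom and the $IP$ computation descend to any order ideal of the Gale poset.
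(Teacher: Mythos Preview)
Your proof is correct and follows essentially the same approach as the paper: both arguments hinge on the observation that replacing an element of a basis by a smaller one produces a Gale-smaller set, so that QE exchanges and internal-passivity witnesses descend to any order ideal of $\Gale(\Psi)$. You simply make this monotonicity lemma explicit and verify purity carefully, whereas the paper's proof is a terse two-line sketch (and, as you implicitly observe, the second claim about $IP$ holds for arbitrary pure ordered complexes, not just those in QE).
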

\begin{proof} The first part of the theorem follows directly from the fact that the lexicographic order is a linear extension of $\Gale(\Psi)$ and the fact that an exchange produced by the QE axiom yields a new basis that is smaller in the lexicographic order than the original one. 

The equality of passive sets even holds for general ordered complexes since the bases that witness external activity of $B$ are all smaller than $B$ in $\Gale(\Psi)$ (they differ from $B$ by one element). 
\end{proof}

A useful property of complexes in QE concerns the strucure of $\Gale(\Psi)$. 

\begin{lemma}\label{lemma:minGale} Let $\Psi = (E, \Delta)$ be a complex that satisfies QE. Then $B_0$ is the unique minimal basis of $\Gale(\Psi)$. In particular, QE is a sublcass of GALE. 
\end{lemma}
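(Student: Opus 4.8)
The plan is to reduce everything to the single claim that \emph{every} basis $B$ of $\Psi$ satisfies $B_0\le_{\Gale}B$. Granting this, $B_0$ is a lower bound in $\Gale(\Psi)$, so it is minimal; and if $B$ is any $\Gale$-minimal basis, then $B_0\le_{\Gale}B$ together with minimality of $B$ forces $B=B_0$, giving uniqueness. The final sentence ``QE is a subclass of GALE'' then follows by combining this with the closure properties already recorded in Theorem~\ref{thm:QEconst} (purity is built into QE, and closure under joins, restrictions and contractions is proved there), since GALE is defined by exactly the unique-minimal-basis condition together with those closure properties.

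Before the induction I would isolate one elementary ``slotting'' observation: if $S$ is a $d$-subset of $E$ and $S'=(S\setminus\{x\})\cup\{y\}$ with $y\notin S$ and $y<x$, then writing both sets in increasing order one has $S'\le_{\Gale}S$ coordinatewise, and moreover $S'$ precedes $S$ in the lexicographic order. This is a routine check: the new element $y$ occupies a position no later than the one vacated by $x$, and every coordinate weakly decreases. This observation is the engine for both halves of the argument below.

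The main claim is proved by induction on $|B\triangle B_0|$, which is always even since $|B|=|B_0|=d$ by purity. If $B=B_0$ there is nothing to prove. Otherwise set $b:=\max(B\triangle B_0)$. First, $b\notin B_0$: if $b\in B_0\setminus B$, then $b>\max(B\setminus B_0)$, so applying the QE axiom to the pair $(B_0,B)$ with distinguished element $b$ yields $b'\in B\setminus B_0$ with $b'<b$ such that $(B_0\setminus\{b\})\cup\{b'\}$ is a basis; by the slotting observation this basis is lexicographically smaller than $B_0$, contradicting minimality of $B_0$. Hence $b\in B\setminus B_0$. Since $B_0\setminus B\ne\emptyset$ (as $B\ne B_0$), we have $b>\max(B_0\setminus B)$, so QE applied to $(B,B_0)$ with distinguished element $b$ produces $b''\in B_0\setminus B$ with $b''<b$ and $B':=(B\setminus\{b\})\cup\{b''\}$ a basis. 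The slotting observation gives $B'\le_{\Gale}B$, and a direct set computation shows $B'\triangle B_0=(B\triangle B_0)\setminus\{b,b''\}$, hence $|B'\triangle B_0|=|B\triangle B_0|-2$. By the inductive hypothesis $B_0\le_{\Gale}B'$, and transitivity yields $B_0\le_{\Gale}B$, completing the induction.

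The only point requiring genuine care is the direction in which the asymmetric QE axiom is invoked: one cannot apply QE to $(B_0,B)$ and finish in one step, since QE only permits removing the \emph{largest} differing element of the \emph{first} basis. The argument therefore removes the top element of $B\triangle B_0$ from $B$, producing a basis $B'$ that is simultaneously $\Gale$-below $B$ and combinatorially closer to $B_0$; the opposite subcase $b\in B_0\setminus B$ is excluded precisely because there QE would instead produce a basis beating $B_0$ in lexicographic order. I expect this bookkeeping—verifying the QE hypothesis $b>\max(B_0\setminus B)$ and tracking $|B\triangle B_0|$—to be the main (though mild) obstacle; the rest is the routine slotting lemma.
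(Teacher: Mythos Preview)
Your proof is correct, and it rests on the same engine as the paper's---apply QE between $B$ and $B_0$ to manufacture a Gale-smaller basis---but the packaging differs. The paper is more economical: it first notes that $B_0$ is Gale-minimal because the lexicographic order is a linear extension of $\Gale(\Psi)$, and then, for any $B\ne B_0$, a \emph{single} application of QE with $B_0$ produces some $B'<_{\Gale}B$, so $B$ is not minimal. No induction is needed; one does not have to march all the way down to $B_0$, only to exhibit one basis strictly below $B$.

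Your route instead proves the stronger statement $B_0\le_{\Gale}B$ directly by induction on $|B\triangle B_0|$, which of course is equivalent a posteriori in a finite poset. What this buys you is a fully self-contained argument: your explicit exclusion of the subcase $b\in B_0\setminus B$ (via the contradiction with lex-minimality of $B_0$) spells out a detail the paper leaves implicit when it simply says ``apply the QE axiom with $B_0$.'' One small inaccuracy worth cleaning up: QE does not require $b_1$ to be the \emph{largest} element of $B_1\setminus B_2$, only that $b_1>\max(B_2\setminus B_1)$; your choice $b=\max(B\triangle B_0)$ certainly satisfies this, so the argument is unaffected.
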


\begin{proof} It is straightforward that $B_0$ is minimal, since the lexicographic order is a linear extension of $\Gale(\Psi)$. On the other hand, if $B$ is any other basis, apply the QE axiom with $B_0$ to get a basis $B'$. The exchange takes an element from $B$ and replaces it with a smaller vertex, thus $B'<_{\Gale} B$. 
\end{proof}

Another useful tool is the internal poset of Las Vergnas. We will see that $\Int(\Psi)$ is coarser than $\Gale(\Psi)$ which will be handy when we study Stanley's conjecture. The following are some structural results.

\begin{lemma}\label{lemma:Grading} Let $\Psi$ be a complex in QE and let $B$ be a basis. There exists an element $b\in IP(B)$ and  a basis $B'$ such that $IP(B') = IP(B)\backslash \{b\}$. In particular, $\Int(\Psi)$ is graded and the degree of a basis is the cardinality of its internally passive subset.  
\end{lemma}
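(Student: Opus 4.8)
The plan is to establish the substantive first sentence by an explicit construction and then deduce the grading assertion formally. Fix a basis $B$ with $IP(B)\neq\emptyset$ and set $b:=\min IP(B)$. Since $b$ is internally passive, the set $F:=\{x\in E\setminus B:\ (B\setminus\{b\})\cup\{x\}\in\B\}$ is nonempty and contains some element smaller than $b$; let $b':=\min F$, so $b'<b$, and put $B':=(B\setminus\{b\})\cup\{b'\}$. Two facts are immediate. First, $B'$ is a basis (a $d$-element independent set in a pure rank-$d$ complex). Second, $b'$ is internally active in $B'$: if some $y<b'$ with $y\notin B'$ satisfied $(B'\setminus\{b'\})\cup\{y\}=(B\setminus\{b\})\cup\{y\}\in\B$, then $y<b'<b$ forces $y\notin B$, so $y\in F$ with $y<\min F$, a contradiction. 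Hence $IP(B')\subseteq B'\setminus\{b'\}=B\setminus\{b\}$, and the heart of the proof is to show $IP(B')=IP(B)\setminus\{b\}$.

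For the inclusion $IP(B)\setminus\{b\}\subseteq IP(B')$, take $e\in IP(B)$ with $e\neq b$; since $b=\min IP(B)$ this forces $e>b$. Fix any witness for the internal passivity of $e$ in $B$, i.e. a basis $\widehat B=(B\setminus\{e\})\cup\{e'\}$ with $e'<e$ and $e'\notin B$. Then $e\in B'\setminus\widehat B$, while every element of $\widehat B\setminus B'$ lies in $\{b,e'\}$, each of which is smaller than $e$. Therefore the QE axiom applies to the bases $B'$ and $\widehat B$ with distinguished element $e$, and returns some $b_2\in\widehat B\setminus B'\subseteq\{b,e'\}$ with $(B'\setminus\{e\})\cup\{b_2\}\in\B$. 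Since $b_2<e$ and $b_2\notin B'$ (recall $b\notin B'$), this exhibits $e$ as internally passive in $B'$.

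For the reverse inclusion $IP(B')\subseteq IP(B)\setminus\{b\}$, we already know $IP(B')\subseteq B\setminus\{b\}$, so take $e\in IP(B')$ together with a witness $(B'\setminus\{e\})\cup\{e''\}=(B\setminus\{b,e\})\cup\{b',e''\}\in\B$ with $e''<e$ and $e''\notin B'$. Here $e$ lies in $B\setminus\big((B\setminus\{b,e\})\cup\{b',e''\}\big)$, and every element of $\{b',e''\}$ is smaller than $e$ (note $b'<b<e$). Applying the QE axiom to $B$ and $(B\setminus\{b,e\})\cup\{b',e''\}$ with distinguished element $e$ produces $b_2$ in the symmetric-difference set $\{b',e''\}\setminus B$ — which is nonempty, since it contains $b'$ — with $(B\setminus\{e\})\cup\{b_2\}\in\B$; as $b_2<e$ and $b_2\notin B$, this witnesses $e\in IP(B)$. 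Combining the two inclusions gives $IP(B')=IP(B)\setminus\{b\}$, which is the main statement.

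It remains to deduce the grading. Since $h_0=1$, Corollary~\ref{cor:QEh-poly} implies that exactly one basis has empty internally passive set; as every element of $B_0$ is internally active (a smaller replacement would give a lexicographically smaller basis), this basis is $B_0$, the unique minimum of $\Int(\Psi)$. Iterating the statement just proved — repeatedly replacing the current basis by the one obtained from deleting the minimum of its internally passive set — yields, for each basis $B$, a chain $B=C_0,C_1,\dots,C_m=B_0$ with $IP(C_{i+1})=IP(C_i)\setminus\{\min IP(C_i)\}$ and $m=|IP(B)|$. Since $B_1<_{\text{int}}B_2$ always forces $|IP(B_1)|<|IP(B_2)|$ and $|IP|$ is integer valued, each step $C_{i+1}<_{\text{int}}C_i$ is in fact a covering relation and no chain from $B_0$ to $B$ can be longer than $|IP(B)|$; hence $\Int(\Psi)$ is graded with $\rk(B)=|IP(B)|$. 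I expect the main obstacle to be the two inclusions above, and specifically the choice of QE moves: the guiding idea is that by always taking the distinguished element of the exchange to be the top element $e$ of the relevant symmetric difference, the QE axiom becomes applicable and, at the same time, the element it hands back is confined to a two-element set — $\{b,e'\}$ in one direction, $\{b',e''\}$ in the other — every member of which does exactly what is needed; the degenerate coincidences (such as $e'=b'$ or $e''=b$) only shrink these sets and cause no trouble.
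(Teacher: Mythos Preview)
Your forward inclusion $IP(B)\setminus\{b\}\subseteq IP(B')$ is fine, but the reverse inclusion has a genuine gap: you assert ``note $b'<b<e$'' for an arbitrary $e\in IP(B')$, and this is precisely what fails. Nothing in your setup forces an internally passive element of $B'$ to lie above $b$, and without $e>b$ (and hence $e>b'$) the QE axiom cannot be invoked with distinguished element $e$, since $b'$ may sit above $e$ in the symmetric difference.

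Here is a concrete counterexample in QE. Take $E=\{1,2,3,4,5\}$ with bases $\B=\{124,\,134,\,135\}$; a direct check of all pairs shows the quasi-exchange axiom holds. For $B=135$ one computes $IP(B)=\{5\}$, so $b=5$ and $F=\{x\notin B:\{1,3,x\}\in\B\}=\{4\}$, giving $b'=4$ and $B'=134$. But $IP(B')=\{3\}$, not $\emptyset=IP(B)\setminus\{5\}$. In your reverse-inclusion argument this is the case $e=3$, $e''=2$; then $b'=4>3=e$, so $e$ is \emph{not} the largest element of the symmetric difference $B\triangle\big((B'\setminus\{e\})\cup\{e''\}\big)=\{2,3,4,5\}$, and QE does not apply with $b_1=e$. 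The lemma itself is still true here --- one may take $B'=B_0=124$ --- but your specific choice of $B'$ is the wrong one.

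The paper avoids this difficulty entirely: it does not construct $B'$ explicitly, but instead observes that the lexicographic order is a shelling (Theorem~\ref{thm:lexShell}) with restriction sets $\RR(B)=IP(B)$, and then invokes Bj\"orner's general lemma for shellings (Lemma~7.2.6 in \cite{MR1165544}), which guarantees for every facet $B$ the existence of some earlier facet $B'$ with $\RR(B')=\RR(B)\setminus\{b\}$. The grading claim then follows exactly as in your last paragraph.
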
 
\begin{proof}
Since the lexicographic order is a shelling and since the restriction sets that shellings are the the internally passive sets, the first part of the result follows from \cite[Lemma 7.2.6]{MR1165544}. The fact that the cardinality of the passive set provides a grading of $\Int(\Psi)$ is straightforward from the previous part. 
\end{proof}

Finally, we verify that there is the following relationship between the Gale and the Int posets of complexes in QE. 

\begin{thm}\label{thm:intVs.Gale} Let $\Psi$ be a complex in QE and let $B$ and $B'$ be bases such that $IP(B) \subseteq B'$. Then $B<_{\Gale}B'$ and hence $\Gale(\Psi)$ is a poset extension of $\Int(\Psi)$. 
\end{thm}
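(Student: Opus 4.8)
The plan is to prove the following slightly more precise statement: if $B$ and $B'$ are bases of $\Psi\in$ QE with $IP(B)\subseteq B'$, then $B\le_{\Gale}B'$, and the inequality is strict unless $B=B'$. The assertion about $\Int(\Psi)$ then follows at once: if $B\le_{\text{int}}B'$, i.e.\ $IP(B)\subseteq IP(B')$, then $IP(B)\subseteq IP(B')\subseteq B'$, so $B\le_{\Gale}B'$; hence every comparability of $\Int(\Psi)$ is a comparability of $\Gale(\Psi)$, which is what it means for $\Gale(\Psi)$ to be a poset extension of $\Int(\Psi)$.

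I would argue by induction on $|B\triangle B'|$, the case $B=B'$ being trivial. Assume $B\ne B'$ and set $b:=\max(B\triangle B')$; since $\Psi$ is pure, $B\setminus B'$ and $B'\setminus B$ are both nonempty. The crucial step is to show $b\in B'$. Suppose instead $b\in B\setminus B'$. Then $b>\max(B'\setminus B)$, so the QE axiom applied to the pair $(B,B')$ with $b_1=b$ yields an element $b_2\in B'\setminus B$ with $(B\setminus\{b\})\cup\{b_2\}\in\B$; since $b_2\in B'\setminus B\subseteq B\triangle B'$ and $b_2\ne b$, we have $b_2<b$. Thus $b$ admits a downward exchange in $B$, i.e.\ $b\in IP(B)$; but $IP(B)\subseteq B'$ and $b\notin B'$, a contradiction. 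Hence $b\in B'\setminus B$.

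Now $b>\max(B\setminus B')$, so QE applied to the pair $(B',B)$ with $b_1=b$ produces $b_2\in B\setminus B'$ with $B'':=(B'\setminus\{b\})\cup\{b_2\}\in\B$, and again $b_2<b$. Replacing the element $b$ of $B'$ by the strictly smaller element $b_2\notin B'$ strictly decreases the Gale order, so $B''<_{\Gale}B'$. Moreover $B\triangle B''=(B\triangle B')\setminus\{b,b_2\}$, so $|B\triangle B''|<|B\triangle B'|$; and $IP(B)\subseteq B''$, because $b\notin B\supseteq IP(B)$ (so deleting $b$ from $B'$ discards nothing in $IP(B)$) while $b_2$ is only added. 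By the inductive hypothesis $B\le_{\Gale}B''$, and therefore $B\le_{\Gale}B''<_{\Gale}B'$, so $B<_{\Gale}B'$, completing the induction.

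The only genuinely substantive point is the exclusion of the case $b\in B$: this is where the hypothesis $IP(B)\subseteq B'$ is used, via the equivalent form ``$B\setminus B'\subseteq IA(B)$'', and it hinges on the precise shape of the QE axiom, namely that the element exchanged into $B$ is some member of $B'\setminus B$ and hence automatically below $b$. Everything else is bookkeeping: that $B''<_{\Gale}B'$, that $|B\triangle B''|$ drops by two, and that $IP(B)\subseteq B''$ persists. One should also record the elementary fact invoked implicitly, that if a $d$-element set $S'$ is obtained from a $d$-element set $S$ by replacing one element with a strictly smaller element not belonging to $S$, then $S'<_{\Gale}S$.
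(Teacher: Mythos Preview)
Your argument is correct and follows essentially the same route as the paper's proof: induction on the size of the symmetric difference (equivalently, since $\Psi$ is pure, on $|B'\setminus B|$), ruling out $\max(B\triangle B')\in B$ via QE and the hypothesis $IP(B)\subseteq B'$, then applying QE to $B'$ to produce a Gale-smaller $B''$ still containing $IP(B)$. Your handling of the contradiction step is in fact slightly more transparent than the paper's phrasing, but the underlying idea is identical.
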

\begin{proof} Fix $B$ and proceed by induction on $|B'\backslash B|$. The base case is $B= B'$, in which case there is nothing to show. Assume that the property holds for all bases $B''$ with $IP(B) \subseteq B''$ and $|B''\backslash B| < |B' \backslash B|$. Next apply the QE axiom with $B$ and $B'$. Let $u = \max{B\triangle B'}$. Notice that $u\notin IP(B) \subseteq B\cap B'$. Hence if $u\in B$, then by the QE axiom there would be $b'\in B'\backslash B$ such that $B'' =(B\backslash\{u\}) \cup \{b'\}$ is a basis, but in this case $B'' <_{lex} B$ and $IP(B) \subseteq B''$ which is impossible. It follows that $u\in B'$ which in turns implies the existence of an element $b \in B$ such that $B'':=(B'\backslash \{u\}) \cup \{b\}$ is a basis. Then $B'' <_\Gale B'$, $IP(B) \subseteq B''$ and $|B''\backslash B| < |B'\backslash B|$. By the inductive hypothesis $B<_\Gale B'' <_\Gale B'$ as desired. 

If $B$ and $B'$ are bases with $B$ smaller than $B'$ in $\Int(\Psi)$, then $IP(B) \subseteq IP(B') \subseteq B'$, thus $B<_{Gale}B'$, which shows that $\Gale(\Psi)$ is a poset extension of $\Int(\Psi)$. 
\end{proof}

\subsection{The Circuit axiom}\label{ssct:circ}
The theory of circuits in matroid theory is what allows for meaningful external activity theories to play a prominent role in the understanding of objects such as the broken circuit complex and Orlik-Solomon algebras. It is a dual notion to that of internal activity in matroid theory. Unfortunately, this is not the case in the quasi-matroidal setting and a careful separate study is required when it comes to duality. The first step toward understanding QC is, again, constructing new complexes from old ones. 
\begin{thm}  
Let $\Psi= (E, \Delta)$ and $\Psi'=(E', \Delta')$ be ordered complexes in QC. 
\begin{compactenum}
\item[i.] If $s(E, E')$ is a shuffle of $E$ and $E'$, then the join $\Psi*_{s(E, E')} \Psi'$ is in QC. 
\item[ii.] If $U\subset E$ is any subset, then $\Psi|_U$ is in QC.  
\item[iii.] If $v$ is a vertex of $\Psi$, then contraction $\Psi \slash v$ is in QC.
\end{compactenum} 
In particular, QC is a quasi-matroidal class. 
\end{thm}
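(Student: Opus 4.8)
The plan is to reduce the QC axiom for each of the three derived complexes to the QC axiom of the complex(es) it is built from, and the first thing I would do is pin down the circuit set in each case. Specifically I would record: \textbf{(a)} the circuits of a restriction $\Psi|_U$ are exactly the circuits of $\Psi$ that lie inside $U$, since any minimal non-face of $\Delta|_U$ contains a circuit of $\Delta$, and that circuit is itself a non-face of $\Delta|_U$ contained in $U$; \textbf{(b)} the circuits of $\Psi*_{s(E,E')}\Psi'$ are precisely the circuits of $\Psi$ together with the circuits of $\Psi'$, carried into the shuffle, because $S$ fails to be a face of $\Delta*\Delta'$ exactly when $S\cap E\notin\Delta$ or $S\cap E'\notin\Delta'$, and the offending trace is a strictly smaller non-face unless $S$ already sits on one side; \textbf{(c)} the circuits of the contraction $\Psi\slash v=(E\setminus\{v\},\link_\Delta(v))$ are the minimal members of $\{\,C\setminus\{v\}:C\ \text{a circuit of}\ \Psi\,\}$, since $D\subseteq E\setminus\{v\}$ is a non-face of $\link_\Delta(v)$ iff $D\cup\{v\}\notin\Delta$ iff $D\supseteq C\setminus\{v\}$ for some circuit $C$ of $\Psi$.

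With \textbf{(a)} and \textbf{(b)} in hand, parts [ii.] and [i.] are immediate. For [ii.], if $C_1\ne C_2$ are circuits of $\Psi|_U$ and $c\in C_1\cap C_2$ with $c<\max(C_1\triangle C_2)$ in the order induced on $U$, then $C_1,C_2$ are distinct circuits of $\Psi$ satisfying the same hypothesis (the order on $U$ is the restriction of the order on $E$), so QC in $\Psi$ supplies a circuit $C_3\subseteq(C_1\cup C_2)\setminus\{c\}\subseteq U$, which is a circuit of $\Psi|_U$. For [i.], any two circuits of the join sharing an element must, by \textbf{(b)}, come from the same side, say $\Psi$; since the shuffle restricts to an order isomorphism on the copy of $E$, the QC hypothesis transfers verbatim and QC in $\Psi$ produces the desired $C_3$, again a circuit of the join.

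The hard part will be [iii.], which is mostly bookkeeping. Let $D_1\ne D_2$ be circuits of $\Psi\slash v$ and $c\in D_1\cap D_2$ with $c<\max(D_1\triangle D_2)$. Using \textbf{(c)}, write $D_i=C_i\setminus\{v\}$ with $C_i$ a circuit of $\Psi$. I would then split into the three cases $v\notin C_1\cup C_2$, $v$ in exactly one of $C_1,C_2$, and $v\in C_1\cap C_2$, and check in each that $C_1\ne C_2$, that $c\in C_1\cap C_2$ (here $c\ne v$), and that $c<\max(C_1\triangle C_2)$; the last point uses the identities $(C_1\setminus\{v\})\triangle C_2=(C_1\triangle C_2)\setminus\{v\}$ when $v\notin C_2$, and $C_1\triangle C_2=(C_1\setminus\{v\})\triangle(C_2\setminus\{v\})$ when $v\in C_1\cap C_2$, together with $\max\big((C_1\triangle C_2)\setminus\{v\}\big)\le\max(C_1\triangle C_2)$. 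QC in $\Psi$ then yields a circuit $C_3\subseteq(C_1\cup C_2)\setminus\{c\}$, and since $v$ is a vertex we have $C_3\ne\{v\}$, so $C_3\setminus\{v\}$ is a nonempty non-face of $\link_\Delta(v)$ contained in $\big((C_1\cup C_2)\setminus\{v\}\big)\setminus\{c\}=(D_1\cup D_2)\setminus\{c\}$; hence it contains a circuit of $\Psi\slash v$ lying in $(D_1\cup D_2)\setminus\{c\}$, which is what QC requires.

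For the ``in particular'' clause I would assemble Definition~\ref{def:quasicrypt}: ordered matroids and pure shifted complexes are already known to lie in QC; if $(E,\Delta)$ is in QC for \emph{every} ordering of its vertex set, then given distinct circuits $C_1,C_2$ and $c\in C_1\cap C_2$ one orders the ground set with $c$ minimal, whereupon $c<\max(C_1\triangle C_2)$ holds automatically and QC delivers circuit elimination, so $\Delta$ is a matroid; finally [ii.] gives closure under deletions, iterating [iii.] over the vertices of an independent set gives closure under contractions, and [i.] gives closure under joins. The only genuinely delicate step is the case analysis in [iii.]: one must verify that deleting $v$ does not push the maximum of the symmetric difference below $c$, and that $C_3\setminus\{v\}$ survives as a nonempty non-face of the link; everything else follows cleanly from the definitions.
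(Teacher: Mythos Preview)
Your proof is correct and follows the same overall strategy as the paper: describe the circuits of each derived complex and reduce the QC axiom there to the QC axiom of the original complex(es). For parts [i.] and [ii.] your argument is essentially identical to the paper's.

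For part [iii.] you are actually more careful than the paper. The paper dispatches this case in a single sentence, asserting that the circuits of $\Psi\slash v$ are in natural bijection with the circuits of $\Psi$ that \emph{contain} $v$. That claim is not correct for arbitrary simplicial complexes: a circuit $C$ of $\Psi$ with $v\notin C$ can perfectly well survive as a circuit of $\link_\Delta(v)$ (this happens precisely when $C\in\Delta|_{E\setminus v}$ but every $(C\setminus\{d\})\cup\{v\}\in\Delta$, which is automatic when $C$ itself is already a minimal non-face). Your description in \textbf{(c)} --- circuits of the link are the minimal members of $\{C\setminus\{v\}:C\in\C(\Psi)\}$ --- is the correct one, and your three-case analysis (according to whether $v$ lies in neither, one, or both of $C_1,C_2$) together with the observation that $C_3\ne\{v\}$ because $v$ is not a loop, is exactly what is needed to make the argument go through. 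So on this point your write-up fills a genuine gap in the paper's sketch.

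The paper also does not spell out the ``in particular'' clause at all; your verification (order the ground set so that $c$ is minimal to recover the full circuit elimination axiom, and assemble closure under joins, deletions, and contractions from [i.]--[iii.]) is correct and complete.
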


\begin{proof}
Part [i.] follows easily from the fact that circuits in the join are either circuits in $\Psi$ or in $\Psi'$, thus if they intersect, they come from the same complex. 

Part [ii.] is an easy consequence of the fact that the circuits of $\Psi|_U$ are the circuits of $\Psi$ that are contained in $U$. 

Part [iii.] follows since the circuits of $\Psi\slash v$ are in natural bijection with circuits of $\Psi$ that contain $v$. 
\end{proof}

\begin{thm}\label{thm:QCFundCirc} Let $\Psi = (E, \Delta)$ be a complex in QC. If $B$ is a basis and $e\in E\backslash B$ is externally active, then there is a unique circuit contained in $B\cup \{e\}$. 
\end{thm}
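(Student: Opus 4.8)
The plan is to argue by contradiction, assuming $B\cup\{e\}$ contains two distinct circuits and deriving a violation of the Quasi-circuit Axiom. First I would record two elementary structural facts. Since $B$ is a basis and $e\notin B$, the set $B\cup\{e\}$ is dependent; and since $B$ itself is independent, it contains no circuit, so \emph{every} circuit contained in $B\cup\{e\}$ must contain $e$. Moreover, by the hypothesis that $e$ is externally active there is a circuit $C_1\subseteq B\cup\{e\}$ with $e=\min C_1$, and this $C_1$ will serve as the distinguished circuit that I claim is the only one.

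Next I would suppose $C_2\subseteq B\cup\{e\}$ is a circuit with $C_2\neq C_1$ and produce a contradiction. Because circuits are minimal dependent sets, $C_1\not\subseteq C_2$, so I may choose $c\in C_1\backslash C_2$. Both $C_1$ and $C_2$ contain $e$ (by the first paragraph), so $c\neq e$, and since $e=\min C_1$ and $c\in C_1$ this forces $c>e$. Hence $c\in C_1\triangle C_2$ and therefore $e<c\le \max(C_1\triangle C_2)$. Now $e\in C_1\cap C_2$ and $e<\max(C_1\triangle C_2)$, so the Quasi-circuit Axiom applies and yields a circuit $C_3\subseteq (C_1\cup C_2)\backslash\{e\}$. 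But $C_1\cup C_2\subseteq B\cup\{e\}$, so $(C_1\cup C_2)\backslash\{e\}\subseteq B$, contradicting the independence of $B$. Thus no circuit of $B\cup\{e\}$ other than $C_1$ exists, which gives uniqueness (existence being immediate from external activity).

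The one point that needs care is verifying that the inequality in the QC hypothesis is actually satisfied, i.e.\ that $C_1\triangle C_2\neq\emptyset$ with maximum strictly above $e$; this is precisely where the external-activity hypothesis enters, since $e=\min C_1$ is what pushes the element $c\in C_1\backslash C_2$ above $e$. I do not expect a serious obstacle: the only subtlety is the observation that minimality of circuits guarantees $C_1\backslash C_2\neq\emptyset$, and (as the example with bases $12,13,24$ and $B=13$, $e=4$ illustrates) the statement genuinely fails without assuming $e$ externally active, so the proof cannot avoid using that hypothesis.
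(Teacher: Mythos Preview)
Your proof is correct and follows essentially the same route as the paper: both argue by contradiction, note that any circuit in $B\cup\{e\}$ must contain $e$, use external activity to get $e=\min C_1$, and then apply the QC axiom with the element $e\in C_1\cap C_2$ to produce a circuit inside $B$. Your write-up is in fact more careful than the paper's in making explicit why $e<\max(C_1\triangle C_2)$ (via an element of $C_1\backslash C_2$), a step the paper's proof glosses over.
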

\begin{proof} 
Assume there are two such circuits $C_1$, $C_2$. Since $e$ is externally active we may assume that $e= \min C_1$. Notice that $e$ is also in $C_2$, because the circuit cannot be a subset of $B$. Then $c<\max C_1\cap C_2$ and the QC axiom implies that there is a circuit $C_3\subseteq (C_1\cup C_2)\backslash \{e\} \subseteq B$. 
\end{proof}

Furthermore, the independent sets of $\nbc(\Psi)$ also have a simple description in terms of broken circuits. 

\begin{lemma} Let $\Psi = (E , \Delta)$ be a pure complex in QC. An independent set $I$ of $\Psi$ is independent in $\nbc(\Psi)$ if and only it contains no broken circuit. 
\end{lemma}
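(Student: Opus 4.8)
The plan is to prove the two directions separately; the ``only if'' direction is a formality and all the substance is in the ``if'' direction. By construction the maximal faces of $\nbc(\Psi)$ are precisely the bases of $\Psi$ that contain no broken circuit, and since such bases are pairwise incomparable, $\nbc(\Psi)$ is exactly their down-closure. Hence if $I$ is a face of $\nbc(\Psi)$ then $I\subseteq B$ for some basis $B$ of $\Psi$ with no broken circuit, and any broken circuit contained in $I$ would lie in $B$; so $I$ contains no broken circuit. This step uses neither the QC axiom nor purity in any essential way. For the converse I want to show that an independent set $I$ of $\Psi$ containing no broken circuit lies in some basis of $\Psi$ that contains no broken circuit, which is exactly the statement $I\in\nbc(\Psi)$.

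The approach to the converse is extremal. Among all bases of $\Psi$ containing $I$ --- there is at least one, obtained by extending $I$ to a maximal face --- I would choose one, call it $B$, that is minimal in lexicographic order, and argue by contradiction: suppose $B$ contains a broken circuit $D=C\setminus\{c\}$, where $C\in\C$ and $c=\min C$. Then $c\notin B$ (since $C\notin\Delta$ but $D\subseteq B\in\Delta$), and $D\not\subseteq I$ (since $I$ contains no broken circuit), so I may fix an element $b\in D\setminus I\subseteq B$; note $c<b$ because $c=\min C$ and $b\in C\setminus\{c\}$. The goal is to manufacture from these data a basis $B'\supseteq I$ with $B'<_{\lx}B$, contradicting the minimal choice of $B$.

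The candidate is $B':=(B\setminus\{b\})\cup\{c\}$, and this is precisely where the QC axiom enters. Since $C=D\cup\{c\}\subseteq B\cup\{c\}$ and $c=\min C$, the element $c$ is externally active with respect to $B$, so Theorem~\ref{thm:QCFundCirc} guarantees that $C$ is the \emph{unique} circuit contained in $B\cup\{c\}$. From uniqueness I would deduce that $B'$ is independent: any circuit inside $B'\subseteq B\cup\{c\}$ must contain $c$ (as $B$ is independent), hence must equal $C$, which is impossible since $b\in C$ while $b\notin B'$. Being independent of size $\rk(\Psi)$, $B'$ is a basis of $\Psi$; moreover $b,c\notin I$ gives $I\subseteq B\setminus\{b\}\subseteq B'$, while $B\triangle B'=\{b,c\}$ with $c<b$ and $c\in B'$ forces $B'<_{\lx}B$. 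This is the contradiction, and it finishes the proof. The only genuine obstacle is the independence of the replacement $B'$: in matroid theory this is the classical fundamental-circuit fact, and here it rests entirely on the uniqueness assertion of Theorem~\ref{thm:QCFundCirc}; everything else is the same lexicographic bookkeeping already used for the exchange class in Subsection~\ref{ssct:exch}.
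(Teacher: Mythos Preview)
Your proof is correct and follows essentially the same approach as the paper: take the lexicographically smallest basis $B\supseteq I$, assume it contains a broken circuit $C\setminus\{c\}$, invoke Theorem~\ref{thm:QCFundCirc} to conclude $C$ is the unique circuit in $B\cup\{c\}$, and then swap $c$ for an element of $(C\setminus\{c\})\setminus I$ to produce a lexicographically smaller basis containing $I$. Your write-up is somewhat more detailed than the paper's (for instance, you spell out why $B'$ is independent and why $B'<_{\lx}B$), but the argument is the same.
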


\begin{proof} Since the bases of $\nbc(\Psi)$ contain no broken circuit, any independent set in $\nbc(\Psi)$ does not contain a broken circuit either. Thus it suffices to show that if $I\in \Delta$ contains no broken circuit, then $I$ is independent in $\nbc(\Psi)$. Let $B$ be the smallest lexicographic basis of $\Psi$ that contains $I$ and assume that $B$ contains a broken circuit $\tilde C$ where $C = \tilde C \cup \{c\}$ is the circuit that was broken. There is an element $c' \in \tilde C \backslash I$. By Theorem~\ref{thm:QCFundCirc}, $C$ is the unique circuit contained in $B\cup\{c\}$. Hence $B\backslash\{c'\}\cup\{c\}$ is a basis that is smaller in the lexicographic order and contains $I$, leading to a contradiction. 
\end{proof}

This endows QC with a theory of fundamental circuits analogous to that of matroids. That is, if $B$ is a basis and $e\in EA(B)$ then the fundamental circuit $Ci(B,e)$ is the unique circuit contained in $B\cup \{e\}$.  Fundamental circuits play a key role in studying various aspects of the Tutte polynomial, which will be studied in a quasi-matroidal setting in Section 6.

\section{The First Basis Property}
The goal of this section is to introduce an important property that holds for matroids and shifted complexes and imposes additional structure on Gale truncations. It is a local condition for ordered complexes. Recall that for a vertex $v$ of an ordered complex, $B_{v,0}$ denotes the smallest lexicographic basis of $\Psi\slash v$. 

\begin{defn}\label{defn:facetProp} Let $\Psi = (E, \Delta)$ be an ordered complex and let $B_0$ be the first lexicographic basis. We say that $\Psi$ satisfies the First Basis Property (FBP) if either:
\begin{compactitem} 
\item[i.] the rank of $\Psi$ is $1$, or
\item[ii.] $\Psi$ has exactly one basis, or 
\item[iii.] for every vertex $v$ of $\Psi$ that is not in $B_0$, the contraction $\Psi\slash v$ satisfies FBP and $B_{v,0}\subset B_0$. 
\end{compactitem}
\end{defn}

Notice that shifted complexes satisfy the FBP. We now show that matroids satisfy the FBP. For this, we first recall the following result:
\begin{thm} \emph{\cite[Corollary 2.3]{Dall}}\label{thm:Dall} Let $\Psi = (E, \Delta)$ be an ordered matroid. Then $IA(B) \subseteq B_0$ for every basis $B$. \end{thm}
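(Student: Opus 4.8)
The plan is to convert internal activity of an element into a statement about a single hyperplane and then read off membership in $B_0$ from the greedy/lexicographic description of $B_0$. Fix a basis $B$ and an element $b\in IA(B)$, and set $H:=\cl_\Delta(B\setminus\{b\})$. Since $B\setminus\{b\}$ is independent of size $d-1$, this is a flat with $\rk(H)=d-1$, and $b\notin H$ because $B=(B\setminus\{b\})\cup\{b\}$ is independent. The first key step is to observe that internal activity forces $E_{<b}:=\{e\in E:e<b\}\subseteq H$: if $e<b$ and $e\in B$ then $e\in B\setminus\{b\}\subseteq H$; if $e<b$ and $e\notin B$, then internal activity of $b$ says $(B\setminus\{b\})\cup\{e\}\notin\B$, and for a matroid this is exactly the statement $e\in\cl(B\setminus\{b\})=H$.

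Next I would assume for contradiction that $b\notin B_0$ and work with the fundamental circuit $C:=C(B_0,b)$, the unique circuit contained in $B_0\cup\{b\}$; note $b\in C$ and $C\setminus\{b\}\subseteq B_0$. For each $c\in C\setminus\{b\}$ the set $(B_0\setminus\{c\})\cup\{b\}$ is again a basis. If some such $c$ satisfied $c>b$, this basis would be lexicographically smaller than $B_0$ (their symmetric difference is $\{b,c\}$ and its minimum $b$ belongs to the new basis), contradicting the minimality of $B_0$. Hence every element of $C\setminus\{b\}$ lies below $b$, i.e. $C\setminus\{b\}\subseteq E_{<b}\subseteq H$. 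But $C$ is a circuit through $b$, so $b\in\cl(C\setminus\{b\})\subseteq\cl(H)=H$, contradicting $b\notin H$. Therefore $b\in B_0$, and since $b\in IA(B)$ was arbitrary, $IA(B)\subseteq B_0$.

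The argument is short, so the main points are bookkeeping rather than a genuine obstacle: one should verify the degenerate cases (rank $1$, or $b$ a coloop) are covered, which they are — coloops lie in every basis, and in rank $1$ the inclusion $E_{<b}\subseteq H$ merely says there is no non-loop below $b$, forcing $b$ to be the smallest non-loop, hence $b\in B_0$. The one place where the matroid hypothesis is genuinely used is the equivalence ``$(B\setminus\{b\})\cup\{e\}$ is not a basis $\iff e\in\cl(B\setminus\{b\})$''; this equivalence fails for general ordered complexes, which is precisely why the FBP has to be imposed as an axiom in the quasi-matroidal setting. An alternative, essentially equivalent, route bypasses the fundamental circuit by invoking the greedy description $B_0=\{e\in E:\rk(E_{\le e})>\rk(E_{<e})\}$: since $\cl(E_{<b})\subseteq H\not\ni b$ one has $b\notin\cl(E_{<b})$, whence $\rk(E_{\le b})\ge\rk(E_{<b}\cup\{b\})=\rk(E_{<b})+1$, so $b\in B_0$.
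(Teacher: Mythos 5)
Your proof is correct, and note that the paper itself gives no argument for this statement at all --- it is quoted as Corollary 2.3 of Dall --- so your write-up is a self-contained supplement rather than a rederivation of anything in the text. Both halves check out. Internal activity of $b$ in $B$, in the form ``no $e<b$ outside $B$ makes $(B\setminus\{b\})\cup\{e\}$ a basis,'' translates in a matroid exactly into $E_{<b}\subseteq H:=\cl(B\setminus\{b\})$ while $b\notin H$, since for independent $B\setminus\{b\}$ of size $d-1$ one has $(B\setminus\{b\})\cup\{e\}\in\B$ iff $e\notin\cl(B\setminus\{b\})$. The fundamental-circuit step is also sound: for $c\in C(B_0,b)\setminus\{b\}$ the exchange $(B_0\setminus\{c\})\cup\{b\}$ is a basis by uniqueness of the fundamental circuit, and if $c>b$ it would precede $B_0$ lexicographically, so $C(B_0,b)\setminus\{b\}\subseteq E_{<b}\subseteq H$ and $b\in\cl(C(B_0,b)\setminus\{b\})\subseteq H$ gives the contradiction. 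Your closing alternative via the greedy description $B_0=\{e\in E:\rk(E_{\le e})>\rk(E_{<e})\}$ is shorter still and bypasses circuits entirely; either route is a legitimate proof of the cited theorem.
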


This allows us to show the first main result of the section. 

\begin{thm}\label{thm:FBPMatroids} Ordered matroids satisfy the FBP.
\end{thm}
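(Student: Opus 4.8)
The plan is to prove Theorem~\ref{thm:FBPMatroids} by induction on the rank $d$ of the ordered matroid $\Psi = (E,\Delta)$, using the fact (standard, and implicit in the preliminaries) that contractions of matroids are matroids. If $d=1$ or $\Psi$ has exactly one basis, then condition (i) or (ii) of Definition~\ref{defn:facetProp} is met and there is nothing to prove. So assume $d \ge 2$ and that $\Psi$ has more than one basis. Fix a vertex $v \notin B_0$. Since $\Psi\slash v$ is a matroid of rank $d-1$, the inductive hypothesis tells us that $\Psi\slash v$ satisfies the FBP; thus the only thing to establish is the containment $B_{v,0}\subseteq B_0$.

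To prove $B_{v,0}\subseteq B_0$, I would argue by contradiction. Note first that $B_{v,0}\cup\{v\}$ is a basis of $\Psi$ (the smallest one through $v$, in fact), and $B_{v,0}$ is the smallest lexicographic basis of $\Psi\slash v$. Suppose some element $w \in B_{v,0}$ is not in $B_0$. The key is to invoke Theorem~\ref{thm:Dall} (Dall's Corollary 2.3): for the basis $B := B_{v,0}\cup\{v\}$ of the matroid $\Psi$, we have $IA(B)\subseteq B_0$. Since $w\notin B_0$, the element $w$ is internally passive in $B$, so there exists $w' < w$ with $w'\notin B$ and $(B\setminus\{w\})\cup\{w'\}\in\B$. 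The plan is to choose $w$ to be the \emph{largest} element of $B_{v,0}\setminus B_0$ and exploit this replacement to produce a basis of $\Psi\slash v$ smaller than $B_{v,0}$, contradicting minimality. Concretely, I want to show that $v$ can be kept fixed during such an exchange, i.e. that $(B_{v,0}\setminus\{w\})\cup\{w'\}$ is a basis of $\Psi\slash v$ and is lexicographically smaller than $B_{v,0}$; the latter is automatic once $w' < w$ and $w'$ replaces $w$ with everything below $w$ unchanged, provided $w'$ is not already among the smaller elements of $B_{v,0}$ — which follows because $w'\notin B$.

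The main obstacle I anticipate is making sure the exchange witnessing internal passivity of $w$ in $B = B_{v,0}\cup\{v\}$ does not remove $v$ itself, so that the resulting basis still contains $v$ and descends to a basis of $\Psi\slash v$. Since $w \in B_{v,0}$ we have $w \ne v$, so we are exchanging $w$ out, not $v$; the replacement element $w'$ satisfies $w' < w$. If $w' = v$ that is fine only if... actually $v\notin B_0$ and we need $B_{v,0}\cup\{v\}$ minus $w$ plus $v$ to make sense, but $v$ is already in $B$, so $w'\ne v$ automatically. Thus $(B_{v,0}\setminus\{w\})\cup\{w'\}$ still contains $v$, is independent in $\Psi$, has size $d$, hence is a basis, and removing $v$ gives a basis of $\Psi\slash v$. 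Because $w$ was chosen maximal in $B_{v,0}\setminus B_0$ and $w' < w$, comparing lexicographically against $B_{v,0}$: the two sets agree on all elements $> w$ (those are in $B_0$ hence... wait, not necessarily, but they agree because we only changed $w$), and at the position of $w$ the new set has the smaller element $w'$ or an even smaller reshuffling — in any case the new basis precedes $B_{v,0}$ in lex order, contradicting the definition of $B_{v,0}$. Hence no such $w$ exists and $B_{v,0}\subseteq B_0$, completing the induction. I would double-check the lex-comparison step carefully, since that is exactly the kind of place where an off-by-one in the ordering of $B_{v,0}\setminus\{w\}\cup\{w'\}$ can go wrong, but conceptually it reduces to the standard fact that replacing an element of a sorted set by a strictly smaller one not already present yields a lexicographically smaller sorted set.
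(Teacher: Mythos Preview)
Your proof is correct and follows essentially the same route as the paper: induct on the rank, and for $B := B_{v,0}\cup\{v\}$ combine Dall's result $IA(B)\subseteq B_0$ with the lex-minimality of $B_{v,0}$ (any internal passivity of an element of $B_{v,0}$ would produce a lex-smaller basis through $v$) to force $B_{v,0}\subseteq B_0$. The paper packages this as first observing $IP(B)\subseteq\{v\}$ and then applying Dall, but the ingredients are identical; note that your choice of $w$ as the \emph{largest} element of $B_{v,0}\setminus B_0$ is unnecessary---any $w\in B_{v,0}\setminus B_0$ already yields the contradiction---and there is a small slip where you write ``$(B_{v,0}\setminus\{w\})\cup\{w'\}$ still contains $v$'' when you mean $(B\setminus\{w\})\cup\{w'\}$.
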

\begin{proof} The proof is a direct induction on the rank of the matroid. The base case is trivial. Let $\Psi$ be a rank-$r$ ordered matroid and let $v$ be a vertex of $\Psi$ not in $B_0$. Then $\Psi\slash v$ is a matroid of a smaller rank, thus it satisfies the FBP by induction. Now let $B$ be the smallest lexicographic facet that contains $v$. Then $IP(B) \subseteq \{v\}$ and since $IP(B)$ is the restriction set of $B$ with respect to the lexicographic shelling order, it follows that $|IP(B)| >0$, since $B \not= B_0$. Thus $IP(B) = \{v\}$ and $B\backslash\{v\} = IA(B) \subseteq B_0$ by Theorem~\ref{thm:Dall}.
\end{proof}

As in the previous section the next step is to study how to construct new complexes from old. The proof is straightforward and is ommitted. 

\begin{thm}\label{thm:FBPconst} Let $\Psi = (E, \Delta)$ and $\Psi' = (E', \Delta')$ be complexes that satisfy the FBP. %Let $A\subseteq E$ be a subset such that $B_0\subseteq A$ and $\Psi|_A$ is pure. Let $I$ be an independent set of $\Psi$ that is disjoint from $B_0$, let $s(E,E')$ be a shuffle of $E$ with $E'$, and let $\cal J$ be an order ideal of $\Gale(\Psi)$. The following complexes satisfy FBP: 
\begin{compactenum}
\item[i.] If $A\subseteq E$ contains $B_0$ and  $\Psi|_A$ is pure, then the restriction $\Psi|_A$ satisfies the FBP.
\item[ii.] If $I$ is an independent set of $\Psi$, then the contraction $\Psi\slash I$ satisfies the FBP. 
\item[iii.] If $s(E, E')$ is a shuffle of $E$ and $E'$, then the join $\Psi*_{s(E,E')}\Psi'$ satisfies the FBP.
\item[iv.] If $\cal J$ is an order ideal of $\Gale(\Psi)$, then the Gale truncation $\Psi[\cal J]$ satisfies the FBP.
\end{compactenum}
\end{thm}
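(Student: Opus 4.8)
The plan is to prove each of the four parts essentially by unwinding the recursive Definition~\ref{defn:facetProp} and using the fact that the FBP is a statement about how the smallest basis of a contraction sits inside $B_0$, so the natural tool is induction on rank, reducing to contractions in each case. The key preliminary observation, used repeatedly, is that for a vertex $v$, the smallest lexicographic basis $B_{v,0}$ of $\Psi\slash v$ is precisely $B'\backslash\{v\}$ where $B'$ is the smallest lexicographic basis of $\Psi$ containing $v$; and more generally for an independent set $I$, $B_{I,0} = B_{I',0}\backslash\{v\}$ where $I = I'\cup\{v\}$ for a suitably chosen vertex. One must also keep track of the interaction between $B_0(\Psi)$ and $B_0$ of the operated complex: for a restriction $\Psi|_A$ with $B_0\subseteq A$, one has $B_0(\Psi|_A) = B_0(\Psi)$, while for a contraction $\Psi\slash I$ one has $B_0(\Psi\slash I) = B_{I,0}$.

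For part [i.], I would induct on the rank of $\Psi$. If $v$ is a vertex of $\Psi|_A$ not in $B_0 = B_0(\Psi|_A)$, then $v$ is also a vertex of $\Psi$ not in $B_0$, so by the FBP for $\Psi$ we have $(\Psi\slash v)$ satisfies FBP and $B_{v,0}(\Psi)\subseteq B_0$. The point is that $\Psi|_A\slash v = (\Psi\slash v)|_{A\backslash\{v\}}$, that $B_0\backslash\{v\} \subseteq A\backslash\{v\}$ (well, $B_0\subseteq A$ and $v\notin B_0$), and that this restriction remains pure; so the inductive hypothesis applied to $\Psi\slash v$ gives FBP for $\Psi|_A\slash v$. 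Finally $B_{v,0}(\Psi|_A) \subseteq B_{v,0}(\Psi)\subseteq B_0$ because restricting to a smaller groundset containing $B_0$ can only make the smallest basis of the contraction at most, not larger, in lexicographic order — here one checks $B_{v,0}(\Psi)\subseteq A$ since $B_{v,0}(\Psi)\subseteq B_0\subseteq A$, so it is still a basis of the restricted contraction, hence its smallest basis is $\le_{\lx} B_{v,0}(\Psi)$, but the same argument in reverse shows it equals $B_{v,0}(\Psi)$; in any case it lies in $B_0$. Part [ii.] is the cleanest: contraction of an independent set $I$ is by definition contraction of vertices one at a time, each of which preserves FBP by condition [iii.] of the definition once one knows the intermediate complexes still have rank dropping by one each step (true since $I$ is independent), so one peels off vertices of $I$ one at a time and applies part of the definition — formally an induction on $|I|$.

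For part [iii.], joins, I would again induct on rank. A vertex $v$ of $\Psi*_{s}\Psi'$ not in $B_0 = B_0(\Psi)\sqcup B_0(\Psi')$ comes from exactly one of the two complexes, say $v\in E$ with $v\notin B_0(\Psi)$. Then $(\Psi*_s\Psi')\slash v = (\Psi\slash v)*_{s'}\Psi'$ for the induced shuffle $s'$, which satisfies FBP by the inductive hypothesis since $\Psi\slash v$ does. And $B_{v,0}(\Psi*_s\Psi') = B_{v,0}(\Psi)\sqcup B_0(\Psi') \subseteq B_0(\Psi)\sqcup B_0(\Psi') = B_0$, using $B_{v,0}(\Psi)\subseteq B_0(\Psi)$ from the FBP of $\Psi$. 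The rank-one and single-basis base cases are immediate. For part [iv.], Gale truncations, the essential facts are that $B_0\in\J$ for any order ideal $\J$ of $\Gale(\Psi)$ (since $B_0$ is $\le_{\Gale}$-minimal by Lemma~\ref{lemma:minGale}'s proof, or directly), so $B_0(\Psi[\J]) = B_0(\Psi)$, and that Gale truncation commutes with contraction: $\Psi[\J]\slash v = (\Psi\slash v)[\J_v]$ for an appropriate order ideal $\J_v$ of $\Gale(\Psi\slash v)$, namely the image of $\{B\in\J : v\in B\}$ under $B\mapsto B\backslash\{v\}$. One checks this set is indeed a Gale order ideal, then the inductive hypothesis (induction on rank) applied to $\Psi\slash v$ — which satisfies FBP — gives that $(\Psi\slash v)[\J_v]$ satisfies FBP, and $B_{v,0}(\Psi[\J])$: here one must argue that the smallest basis of $\Psi\slash v$ lying in the truncation is still $B_{v,0}(\Psi)$, which follows because $B_{v,0}(\Psi) = B_{v,0}(\Psi)$ corresponds to $B'\backslash\{v\}$ for $B'$ the smallest basis of $\Psi$ through $v$, and $B'$ is $\le_{\Gale}$-below enough bases to stay in $\J$ — more carefully, $B'\le_{\Gale} B$ for every basis $B$ containing $v$ with $B\backslash\{v\}$ a basis of the truncated contraction, so if any such $B$ is in $\J$ then $B'\in\J$. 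Thus $B_{v,0}(\Psi[\J]) = B_{v,0}(\Psi)\subseteq B_0$.

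The main obstacle I anticipate is the bookkeeping in part [iv.]: verifying that $\{B\backslash\{v\} : B\in\J,\, v\in B\}$ is an order ideal of $\Gale(\Psi\slash v)$ and that the smallest basis of the truncated contraction is unchanged requires a careful comparison between $<_{\Gale}$ on $\Psi$ and on $\Psi\slash v$, keeping track of whether the deleted coordinate $v$ shifts the indexing of the remaining sorted entries. One subtlety is that when $v$ is removed, a basis $B\ni v$ of rank $d$ becomes a set of size $d-1$, and the componentwise comparison must be done after re-sorting; a clean way to handle this is to note that for two bases $B_1, B_2$ both containing $v$, one has $B_1\le_{\Gale}B_2$ in $\Psi$ if and only if $B_1\backslash\{v\}\le_{\Gale}B_2\backslash\{v\}$ in $\Psi\slash v$, which follows since deleting a common element from two sorted sequences preserves the coordinatewise dominance order. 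Once this lemma is in hand the rest of part [iv.] is routine, and parts [i.]--[iii.] are comparatively mechanical inductions.
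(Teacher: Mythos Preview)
The paper omits the proof entirely (``straightforward and omitted''), so there is nothing to compare against; your outline is doing real work. The inductive strategy on rank is the right one, and parts [i.] and [iii.] go through as you describe. Two places deserve more care, however.

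\textbf{Part [ii.].} Condition [iii.] of Definition~\ref{defn:facetProp} only tells you that $\Psi\slash v$ satisfies FBP when $v\notin B_0$. If $I$ meets $B_0$ you cannot simply invoke the definition. The fix is a separate inductive argument (on rank) showing that contracting a vertex $v\in B_0$ also preserves FBP: for $w\notin B_0(\Psi\slash v)=B_0\setminus\{v\}$ one has $(\Psi\slash v)\slash w=(\Psi\slash w)\slash v$, and since $w\notin B_0$ one knows $\Psi\slash w$ has FBP; then split on whether $v$ lies in $B_{w,0}(\Psi)=B_0(\Psi\slash w)$ and apply induction in either case. This is not hard, but it is a genuine extra step you skipped.

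\textbf{Part [iv.].} The lemma you flag (removing a common element preserves $\le_{\Gale}$) is correct and needed, but it is not the hard part. The step you assert without proof is that the lex-smallest basis $B'$ containing $v$ satisfies $B'\le_{\Gale}B$ for \emph{every} basis $B\ni v$; equivalently, that $B_{v,0}$ is the $\le_{\Gale}$-minimum of $\Psi\slash v$. In a general pure complex the lex-smallest basis is only Gale-\emph{minimal}, not the minimum, so this genuinely uses FBP. The cleanest way to get it is to prove (by the same rank induction) that FBP implies $B_0$ is the Gale minimum: if $B\neq B_0$, pick $w\in B\setminus B_0$; by FBP and induction on rank, $B_{w,0}\le_{\Gale}B\setminus\{w\}$ in $\Psi\slash w$, hence $B_{w,0}\cup\{w\}\le_{\Gale}B$; and since $B_{w,0}=B_0\setminus\{b_i\}$ for some $i$ with $b_i<w$ (else $B_{w,0}\cup\{w\}$ would be lex-smaller than $B_0$), one checks directly that $B_0\le_{\Gale}B_{w,0}\cup\{w\}$. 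Applying this to $\Psi\slash v$ (which has FBP by the definition) gives exactly the missing inequality, and then your argument that $B'\in\J$ goes through.
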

\begin{rem} Notice that Theorems~\ref{thm:FBPMatroids} and \ref{thm:FBPconst} imply that if $\A$ is a quasi-matroidal class, then the class $\A\cap$ FBP of complexes in $\A$ that satisfy FBP is also a quasi-matroidal class. 
\end{rem}
We will study the classes QE$\cap$FBP, QI$\cap$FBP and their intersection. 

\begin{lemma}\label{lemma:contraction} Assume that $\Psi= (E, \Delta)$ is a complex that satisfies the FBP and let $I$ be an independent set such that $B_0\cap I = \emptyset$. Then $B_{I,0}\subseteq B_0$. 
\end{lemma}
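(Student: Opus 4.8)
The plan is to argue by induction on $|I|$, the inductive statement being the full lemma (for every ordered complex satisfying the FBP and every independent set of the given size disjoint from its first basis). The base cases are $|I|=0$, where the contraction of $I$ is $\Psi$ itself and $B_{I,0}=B_0$, so there is nothing to prove; and $\rk(\Psi)\le 1$, where independence of $I$ forces $|I|\le\rk(\Psi)\le 1$, so either $I=\emptyset$ or $I=\{v\}$ with $v\notin B_0$, and in the latter case the contraction $\Psi\slash v$ has rank $0$, whence $B_{I,0}=\emptyset\subseteq B_0$.

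For the inductive step I would assume $|I|\ge 1$ and $\rk(\Psi)\ge 2$ and first record two elementary observations. Each element $v$ of $I$ is a vertex of $\Psi$, since $\{v\}\subseteq I\in\Delta$ forces $\{v\}\in\Delta$ and hence $v$ lies in some basis. Consequently $\Psi$ has more than one basis: in a complex with a unique basis every vertex lies in $B_0$, which would contradict $I\ne\emptyset$ and $I\cap B_0=\emptyset$. Since $\rk(\Psi)\ge 2$ and $\Psi$ has at least two bases, clauses (i) and (ii) of Definition~\ref{defn:facetProp} fail, and as $\Psi$ satisfies the FBP this forces clause (iii) to hold.

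Now I would fix any $v\in I$. By clause (iii), the contraction $\Psi\slash v$ satisfies the FBP and its smallest lexicographic basis $B_{v,0}$ is contained in $B_0$. Put $I':=I\setminus\{v\}$. Then $I'$ is independent in $\Psi\slash v$, because $I'\cup\{v\}=I\in\Delta$, and $I'$ is disjoint from the first basis $B_{v,0}$ of $\Psi\slash v$, since $I'\cap B_{v,0}\subseteq I\cap B_0=\emptyset$; moreover $|I'|=|I|-1$. Applying the inductive hypothesis to $\Psi\slash v$ and $I'$ gives that the smallest lexicographic basis of the contraction of $I'$ in $\Psi\slash v$ is contained in $B_{v,0}$. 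Since contracting an independent set does not depend on the order in which its vertices are contracted, that complex equals $\Psi\slash I$; and since the smallest lexicographic basis of an ordered complex depends only on the complex and its ordered ground set, this basis is exactly $B_{I,0}$. Hence $B_{I,0}\subseteq B_{v,0}\subseteq B_0$, completing the induction.

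I do not expect a genuine obstacle here: the argument is a routine induction, and the only points needing care are ensuring that clause (iii) of the FBP, rather than the degenerate clauses (i) or (ii), is the operative one in the inductive step — which is why the rank and the number of bases must be controlled — and the bookkeeping identity that the contraction of $I$ equals the contraction of $I\setminus\{v\}$ inside $\Psi\slash v$, which is what lets $B_{I,0}$ be computed in the smaller complex.
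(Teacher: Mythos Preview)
Your proof is correct and follows essentially the same route as the paper: induction on $|I|$, picking $v\in I$, invoking the FBP to get that $\Psi\slash v$ satisfies the FBP with $B_{v,0}\subseteq B_0$, and then applying the inductive hypothesis to $I\setminus\{v\}$ inside $\Psi\slash v$ to obtain $B_{I,0}\subseteq B_{v,0}\subseteq B_0$. The paper's version is a two-line sketch; your additional care about the base cases and about which clause of Definition~\ref{defn:facetProp} is operative is welcome but does not constitute a different approach.
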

\begin{proof} If $v\in I$ then  $\Psi\slash I = (\Psi \slash v) \slash (I\slash\{v\})$. Then by induction and the FBP condition $B_{I,0} \subseteq B_{v,0} \subseteq B_0$. 
\end{proof}

\begin{thm}\label{thm:QE+FPP} Let $\Psi = (E, \Delta)$ be a complex  QE$\cap$FBP, and let $B$ be a basis of $\Psi$. Then $B\backslash B_0\subseteq IP(B)$. 
\end{thm}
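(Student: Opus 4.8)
The plan is to recast the claim as: \emph{for every basis $B$ and every $b\in B\backslash B_0$, the element $b$ is internally passive in $B$}; equivalently, there is $b'<b$ with $b'\notin B$ and $(B\backslash\{b\})\cup\{b'\}\in\B$. I would argue by induction on $\rk(\Psi)=d$, the point being that contracting a suitably chosen element of $B$ keeps us inside QE$\cap$FBP (by Theorem~\ref{thm:QEconst}(iv) and Theorem~\ref{thm:FBPconst}(ii)), while Lemma~\ref{lemma:contraction} pins down where the new minimal basis sits relative to $B_0$.

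First I would establish an auxiliary Gale-order comparison valid for any complex in QE: if $B\ne B_0$ is a basis, then $\max(B\backslash B_0)>\max(B_0\backslash B)$. Here Lemma~\ref{lemma:minGale} supplies the componentwise inequality $a_i\le c_i$, writing $B_0=\{a_1<\dots<a_d\}$ and $B=\{c_1<\dots<c_d\}$. Setting $\beta:=\max(B\backslash B_0)$ and letting $m$ be the number of elements of $B$ that exceed $\beta$, maximality of $\beta$ forces all $m$ of them into $B_0$, so $\beta=c_{d-m}$ and hence $a_{d-m}\le\beta$. If the comparison failed, then those $m$ elements together with $\max(B_0\backslash B)$ would be $m+1$ distinct elements of $B_0$ above $\beta$, which (since the elements of $B_0$ above $\beta$ form a final segment of $a_1<\dots<a_d$) forces $a_{d-m}>\beta$, a contradiction. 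I expect this counting step to be the main obstacle; it is the only place a genuine inequality argument is needed, the rest being bookkeeping.

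With the comparison in hand, I would split on whether $b=\beta=\max(B\backslash B_0)$. If $b=\beta$ — the only possibility when $d=1$ — then $b>\max(B_0\backslash B)$, so the QE axiom applies to the pair $B,B_0$ with distinguished element $b$ and returns $b_2\in B_0\backslash B$ with $(B\backslash\{b\})\cup\{b_2\}$ independent of size $d$, hence a basis; since $b_2\le\max(B_0\backslash B)<b$ and $b_2\notin B$, this exhibits $b\in IP(B)$. If $b<\beta$ (which forces $d\ge 2$), I would put $u:=\beta$, a vertex lying in $B\backslash B_0$ with $u\ne b$, and pass to the contraction $\Psi\slash u$: it is a rank-$(d-1)$ complex in QE$\cap$FBP, its smallest basis $B_{u,0}$ is contained in $B_0$ by Lemma~\ref{lemma:contraction} (as $u\notin B_0$), and $B\backslash\{u\}$ is one of its bases with $b\in(B\backslash\{u\})\backslash B_{u,0}$. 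Applying the inductive hypothesis to $\Psi\slash u$ and the basis $B\backslash\{u\}$ yields an exchange $b'<b$, $b'\notin B\backslash\{u\}$, with $(B\backslash\{u,b\})\cup\{b'\}$ a basis of $\Psi\slash u$; since a basis of $\Psi\slash u$ cannot contain $u$ one gets $b'\ne u$, so adjoining $u$ produces the basis $(B\backslash\{b\})\cup\{b'\}$ of $\Psi$ with $b'<b$ and $b'\notin B$, giving $b\in IP(B)$. This completes the induction and hence the proof.
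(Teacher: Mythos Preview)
Your proof is correct and follows essentially the same route as the paper: induct on rank, set $v=\max(B\backslash B_0)$, use QE against $B_0$ to show $v\in IP(B)$, then contract $v$ and use FBP to get $B_{v,0}\subseteq B_0$ so that the remaining elements of $B\backslash B_0$ are handled by induction. The one place where you add genuine content is the Gale-order counting argument showing $\max(B\backslash B_0)>\max(B_0\backslash B)$; the paper simply writes ``By QE applied with $B_0$ we obtain that $v\in IP(B)$'' and leaves this hypothesis of the QE axiom implicit, whereas you derive it carefully from Lemma~\ref{lemma:minGale}.
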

\begin{proof} The proof goes by induction on the rank of $\Psi$. Let $v$ be the maximal vertex in $I:= B\backslash B_0$. By QE applied with $B_0$ we obtain that $v\in IP(B)$. From $B_{v,0}\subset B_0$ it follows that $I-{v}\subseteq (B\backslash\{v\}) \backslash B_{v,0}$. Thus for $v'\in I\backslash \{v\}$ and by induction $v' \in IP(B\backslash v, \Psi\backslash v)$. That is, there is $u< v'$ such that $B\backslash\{v,v'\} \cup \{u\}$ is a basis of $\Psi \backslash v$. Therefore $B\backslash\{v'\}\cup \{u\}$ is a facet of $\Psi$, and so $v' \in IP(B)$.
\end{proof}

Finally, the following result will provide flexibility in doing inductive arguments on the number of facets of complexes and in exploiting the whole power of the shelling orders. 

\begin{thm}\label{thm:QE+QI+FBP} Let $\Psi = (E, \Delta)$ be a complex in QI$\cap$FBP and let $\cal J$ be an order ideal of $\Gale(\Psi)$. Then $\Psi[\cal J]$ satisfies QI and FBP. 
\end{thm}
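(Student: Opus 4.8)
The FBP half is immediate: applying Theorem~\ref{thm:FBPconst}, part (iv), to $\Psi$ (which satisfies FBP by hypothesis) shows that $\Psi[\J]$ satisfies FBP, so the content is that $\Psi[\J]$ satisfies QI, and the plan is to prove this by induction on the rank $d$ of $\Psi$; the case $d=1$ is a direct check. Two preparatory facts carry the argument. The first is a purely combinatorial compatibility between the Gale order and contraction: \emph{if $B,C$ are bases of a pure ordered complex, $J\subseteq B\cap C$ is independent, and $C\setminus J\le_{\Gale}B\setminus J$ in $\Gale(\Psi\slash J)$, then $C\le_{\Gale}B$ in $\Gale(\Psi)$.} One proves this for $|J|=1$ by inspecting the positions of the contracted vertex inside the sorted tuples of $B$ and $C$ (the configuration where it sits strictly earlier in $C$ than in $B$ is ruled out, the other two follow by bookkeeping), then iterates. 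Its immediate consequences are that for any independent set $I$ of $\Psi[\J]$ the family $\J_I:=\{\,B\setminus I : B\in\J,\ I\subseteq B\,\}$ is an order ideal of $\Gale(\Psi\slash I)$ and $\Psi[\J]\slash I=(\Psi\slash I)[\J_I]$; since $\Psi\slash I$ again lies in QI$\cap$FBP by Theorems~\ref{thm:QIconst} and~\ref{thm:FBPconst} and has rank $d-|I|$, this is exactly the situation to which the inductive hypothesis applies when $I\neq\emptyset$.

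The second preparatory fact is that \emph{every complex in QI$\cap$FBP has $B_0$ as its Gale minimum.} This I prove separately, by induction on rank: given a basis $B\neq B_0$, set $v:=\max(B\setminus B_0)$, so $v\notin B_0$; FBP gives $B_{v,0}\subseteq B_0$ and $\Psi\slash v\in{}$QI$\cap$FBP of smaller rank, so inductively $B_{v,0}$ is the Gale minimum of $\Psi\slash v$ and hence $B_{v,0}\le_{\Gale}B\setminus v$; the compatibility fact then gives $B_{v,0}\cup\{v\}\le_{\Gale}B$. Since $B_{v,0}\subseteq B_0$ we have $B_{v,0}\cup\{v\}=(B_0\setminus\{a\})\cup\{v\}$ for the unique $a\in B_0\setminus B_{v,0}$, and lex-minimality of $B_0$ forces $v>a$, so $B_0\le_{\Gale}(B_0\setminus\{a\})\cup\{v\}\le_{\Gale}B$. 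Combining the two facts (and that a nonempty order ideal contains the minimum): for every independent set $J$ of $\Psi[\J]$ the lexicographically least basis of $\Psi$ containing $J$ is $B_{J,0}\cup J$, it is the $\Gale$-least such basis, it lies in $\J$, and in particular $B_{J,0}$ is unchanged when passing from $\Psi$ to $\Psi[\J]$; thus $B_0\in\J$ is the first basis of $\Psi[\J]$, and for a vertex $a\in B_0$ the first basis of $\Psi[\J]\slash a=(\Psi\slash a)[\J_{\{a\}}]$ is $B_0\setminus\{a\}$.

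Now for QI of $\Psi[\J]$: let $I_1,I_2$ be independent in $\Psi[\J]$ with $|I_1|>|I_2|$ and let $I\subseteq I_1\cap I_2$ witness the hypothesis, i.e.\ $I_1\setminus I\subseteq B_{I,0}$ computed in $\Psi[\J]$. If $I\neq\emptyset$, rewrite $\Psi[\J]\slash I=(\Psi\slash I)[\J_I]$, which satisfies QI by the inductive hypothesis; its first basis is the very set $B_{I,0}$ of the hypothesis, so the pair $(I_1\setminus I,\ I_2\setminus I)$ together with witness $\emptyset$ satisfies the QI hypothesis there and produces $e\in(I_1\setminus I)\setminus(I_2\setminus I)=I_1\setminus I_2$ with $(I_2\setminus I)\cup\{e\}$ independent in $\Psi[\J]\slash I$, i.e.\ $I_2\cup\{e\}$ independent in $\Psi[\J]$. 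If $I=\emptyset$ then $I_1\subseteq B_0\in\J$. If moreover $I_2\cap B_0\neq\emptyset$, pick $a\in I_2\cap B_0$; since $I_1\cup\{a\}\subseteq B_0\in\J$ the sets $I_1\setminus\{a\}$ and $I_2\setminus\{a\}$ are independent in $\Psi[\J]\slash a=(\Psi\slash a)[\J_{\{a\}}]$, whose first basis $B_0\setminus\{a\}$ contains $I_1\setminus\{a\}$, so the inductive hypothesis applies there with witness $\emptyset$ and yields $e\in I_1\setminus I_2$ with $I_2\cup\{e\}$ independent in $\Psi[\J]$. If instead $I_2\cap B_0=\emptyset$, then $I_1\cap I_2=\emptyset$ and Lemma~\ref{lemma:contraction} gives $B_{I_2,0}\subseteq B_0$, whence $|B_{I_2,0}\cap I_1|\ge|B_{I_2,0}|+|I_1|-|B_0|=|I_1|-|I_2|>0$; any $e$ in this intersection lies in $I_1\setminus I_2$ and $I_2\cup\{e\}\subseteq B_{I_2,0}\cup I_2\in\J$. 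This exhausts all cases, so $\Psi[\J]$ satisfies QI.

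The main obstacle is the case $I=\emptyset$. One would like to simply invoke the QI axiom of $\Psi$ itself, but an element $e$ witnessing exchange inside $\Psi$ need not complete to a basis lying in the ideal $\J$. The remedy is to route everything through $B_0$, which QI$\cap$FBP forces to be the Gale minimum of $\Psi$ and hence to belong to $\J$: either we contract a vertex of $B_0$ lying in $I_2$ — and it is essential that after contraction the instance still has the ``witness $\emptyset$'' shape, which is precisely the identity $B_{\{a\},0}=B_0\setminus\{a\}$ supplied by the Gale-minimum property — or, when $I_2$ misses $B_0$, we use FBP through Lemma~\ref{lemma:contraction} to keep $B_{I_2,0}$ inside $B_0$ and finish by counting. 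A secondary point, used throughout, is that contractions of the truncation really are truncations of the contraction, $\Psi[\J]\slash I=(\Psi\slash I)[\J_I]$ with $\J_I$ still an order ideal; establishing this is exactly what the Gale/contraction compatibility fact is for.
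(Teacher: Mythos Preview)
Your proof is correct but follows a substantially more elaborate route than the paper's. The paper argues directly and uniformly in $I$, with no induction on rank and no case split: since $\Psi[\J]$ inherits FBP, so does the contraction $\Psi[\J]\slash I$, whose first basis is precisely the $B_{I,0}$ appearing in the QI hypothesis; applying (a mild extension of) Lemma~\ref{lemma:contraction} inside this contracted complex to the independent set $I_2\setminus I$ yields $B_{I_2,0}\subseteq B_{I,0}$ at once, and then the pigeonhole argument you use in your last case --- $I_1\setminus I$ and $B_{I_2,0}$ are both subsets of $B_{I,0}$, of sizes $|I_1|-|I|$ and $d-|I_2|$ summing past $|B_{I,0}|=d-|I|$ --- finishes for \emph{every} $I$ simultaneously, producing $e\in(I_1\setminus I)\cap B_{I_2,0}\subseteq I_1\setminus I_2$ with $I_2\cup\{e\}\subseteq I_2\cup B_{I_2,0}\in\J$. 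Your Gale/contraction compatibility lemma, the Gale-minimum fact, and the preservation of $B_{J,0}$ under truncation are thus all unnecessary for this theorem, though they are correct and of independent interest (the paper does not record, for instance, that $B_{J,0}$ is unchanged on passing from $\Psi$ to $\Psi[\J]$). One small imprecision worth flagging: Theorem~\ref{thm:QIconst}(ii) as stated only covers contraction by a vertex $v\notin B_0$, while you invoke QI-closure under arbitrary contractions, in particular by $a\in I_2\cap B_0$; the proof of that theorem actually works verbatim for any vertex, so your claim stands, but the citation does not literally support it.
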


\begin{proof} Since we saw that the FBP is preserved by Gale truncations, it suffices to show that $\Psi[\cal J]$ is in QI. Let $I_1$ and $I_2$ be independent sets in $\Psi[\cal J]$ with $|I_1|>|I_2|$ and $I\subseteq I_1\cap I_2$ such that $I_1\backslash I \subseteq B_{I, 0}$. Notice that $B_{I_2, 0}$ is a basis of $\Psi[\cal J]$, because $I_2$ is independent in $\Psi[\cal J]$. By FPP $B_{I_2,0} \subseteq B_{I,0}$. It follows from the cardinality condition and the pigeon hole principle there is some $ e\in (I_1\backslash I_2) \cap B_{I_2,0}$. 
\end{proof}

\begin{rem} Notice that QI is, in general, not closed under Gale truncations. For example, the complex $\Psi_2$ in the proof of Theorem~\ref{thm:ex} fails to satisfy the FBP and it is easily seen that removing the top Gale facet makes the QI axiom fail. 
\end{rem}
\section{Tutte polynomials and nbc complexes}\label{sec:TuttePolynomialNBCComplex}\label{sct:Tutte} In this section we develop a theory of Tutte polynomials (that is, a universal deletion-contraction invariant)  for complexes in QE$\cap$QC. The aim is to get a theory of Tutte-Grothendieck invariants similar to that in matroid theory. 

Let $\cal S := $ QE$\cap$QC and let $R$ be a ring. An invariant $f$ is a map that associates to every complex $\Psi$ of $\cal S$ an element $f(\Psi) \in R$ in such a way that if $\Psi \cong \Psi'$, then $f(\Psi) = f(\Psi')$. A \emph{Tutte-Grothendieck invariant}, \emph{ TG-invariant} for short, is an invariant that satisfies the following recurrence:
\begin{equation}\label{eqn:TG} f(\Psi) = \begin{cases} f(\Psi|_{\{e\}})f(\Psi|_{E\backslash\{e\}}) \, \text{ if } e \text{ is a loop or a coloop,} \\ f(\Psi\slash \{e\}) + f(\Psi\backslash \{e\}) \, \text{    if  } e \text{ is the largest non-coloop vertex of $\Psi$.}\\ \end{cases} \end{equation}

Just as in matroid theory are many natural TG invariants for the class $\cal S$ and there is a TG invariant that rules them all. 

%\begin{lemma}\label{lemma:TutteCones}
%Let $\Psi = (E, \Delta)$ be a complex in $\cal S$ and let $e_1, e_2$ be removable vertices. Then $e_2$ is not a coloop in $\Psi\backslash e_1$. 
%\end{lemma}
%\begin{proof}
%In both cases of $\cal S$ it suffices to show that there is a basis of $\Psi$ that contains $e_1$ and not $e_2$. Such a basis will not contain $v_2$. If $\cal S$ includes de QI, use QI with $B_0$ and $v$ to get such a basis. In the case when QI is not satisfied, let $B$ be any basis that contains $e_1$. If it contains $e_2$, then by removability, $e_2 > \max B_0 \backslash B$, thus it can be removed to obtain a basis $\hat B$ that contains $e_1$ and not $e_2$. 
%\end{proof}
%\begin{thm}\label{thm:Tutte}
%There is unique TG invariant $T$ to the ring $\mathbb{Z}[x,y]$ such that $T(\Psi_{\text{coloop}}) = x$ and $T(\Psi_{\text{loop}}) = y$, such an invariant is called the Tutte-Polynomial. 
%\end{thm}
%\begin{proof} The proof of the existence and uniqueness goes by induction on $|E|$. If $|E|$ is equal to one then $\Psi$ is either $\Psi_{\text{coloop}}$ or $\Psi_{\text{loop}}$ and there is nothing to show. Otherwise, let $e_1$ and $e_2$ be elements of $E$ that are either removable or trivial. The goal is to prove that the result of computing $T$ by removing $e_1$ first and then $e_2$ is the same as the result of computing $T$ by removing $e_2$ and the $e_1$. If either $e_1$ or $e_2$ is trivial, the computation is straightforward. Assume that $e_1$ and $e_2$ are removable. Notice that if $e_2$ is a loop in $\Psi\backslash e_1$, then 
%\end{proof}

\begin{defn} Let $\Psi$ be a complex in $\cal S$. The Tutte Polynomial of $\Psi$ is defined to be
\begin{equation}T(\Psi, x, y ) := \sum_{B\in \B} x^{|IA(B)|} y^{|EA(B)|}. \end{equation}
\end{defn}

\begin{thm} The Tutte Polynomial is a TG invariant that is universal in the following sense: if $f$ is a TG-invariant on the class $\cal S$, then for every $\Psi$ in $\cal S$ we have that  \begin{equation} f(\Psi) = T(\Psi, f(\Psi_{\emph{\text{coloop}}}),f( \Psi_{\emph{\text{loop}}})).\end{equation}
\end{thm}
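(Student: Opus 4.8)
The plan is to prove the two assertions separately: first that $T(\Psi,x,y)$ satisfies the Tutte--Grothendieck recurrence \eqref{eqn:TG}, and then that this recurrence, together with the values on $\Psi_{\text{loop}}$ and $\Psi_{\text{coloop}}$, forces the universality formula. For the recurrence, I would split into the two cases of \eqref{eqn:TG}. When $e$ is a coloop, every basis contains $e$; one checks that $e$ contributes nothing to external activity (it is in every basis, so there is no circuit in $B\cup\{e\}$ with $e$ smallest — in fact no new circuit at all) and that $e$ is internally active with respect to every basis precisely because $e$ is a coloop, so that $|IA(B)| = |IA(B\setminus\{e\},\Psi|_{E\setminus\{e\}})| + 1$ and $|EA(B)| = |EA(B\setminus\{e\},\Psi|_{E\setminus\{e\}})|$. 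Summing over bases, and noting $T(\Psi_{\text{coloop}},x,y)=x$, gives $T(\Psi,x,y) = x\cdot T(\Psi|_{E\setminus\{e\}},x,y) = T(\Psi|_{\{e\}},x,y)\,T(\Psi|_{E\setminus\{e\}},x,y)$. The loop case is dual and easier: a loop $e$ lies in no basis and forms a circuit $\{e\}$ by itself, hence is externally active with respect to every basis, contributing a factor $y = T(\Psi_{\text{loop}},x,y)$, while the bases and their activities are otherwise those of $\Psi|_{E\setminus\{e\}}$.

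The substantive case is the deletion--contraction step, where $e$ is the largest non-coloop vertex. Here I would set up a bijection between $\B(\Psi)$ and $\B(\Psi\slash\{e\}) \sqcup \B(\Psi\setminus\{e\})$: bases of $\Psi$ not containing $e$ are exactly the bases of the deletion $\Psi\slash\{e\} = \Psi|_{E\setminus\{e\}}$, and bases $B$ of $\Psi$ containing $e$ correspond to bases $B\setminus\{e\}$ of the contraction $\Psi\setminus\{e\}$ (using that $e$ is a vertex, so the contraction has the same rank behavior). The key claim is that under this correspondence the internal and external activity statistics are preserved: if $e\notin B$ then $IA(B,\Psi) = IA(B,\Psi\slash\{e\})$ and $EA(B,\Psi) = EA(B,\Psi\slash\{e\})$ up to the (non-)appearance of $e$, and if $e\in B$ then $IA(B,\Psi)$ corresponds to $IA(B\setminus\{e\},\Psi\setminus\{e\})$ and similarly for external activity. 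Crucially, because $e$ is the \emph{largest} non-coloop vertex, $e$ is never internally active when it lies in a basis (there is always a smaller non-coloop available, or $e$ being maximal means the lexicographically-smallest witness condition fails) and $e$ is never externally active when it lies outside a basis (it cannot be the minimum of a circuit it belongs to, as it is the largest vertex around). This is where QE and QC enter: QE (via Theorem~\ref{thm:QEconst}) guarantees the contraction is again in QE and controls how internal passivity/activity transfers, and QC (via Theorem~\ref{thm:QCFundCirc} and the fundamental circuit theory) guarantees that external activity is governed by the unique fundamental circuits and that these behave correctly under restriction and contraction.

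For the universality statement, I would argue by induction on the number of vertices (equivalently, bases) of $\Psi$. The base cases are $\Psi_{\text{loop}}$ and $\Psi_{\text{coloop}}$, where $T(\Psi_{\text{loop}},x,y) = y$ and $T(\Psi_{\text{coloop}},x,y) = x$ match $f(\Psi_{\text{loop}})$ and $f(\Psi_{\text{coloop}})$ after the substitution by definition; more generally any complex all of whose vertices are loops or coloops is handled by the multiplicative rule and closure of $\cal S$ under joins/restrictions. For the inductive step, pick the largest non-coloop vertex $e$; then $f(\Psi) = f(\Psi\slash\{e\}) + f(\Psi\setminus\{e\})$ by hypothesis on $f$, both smaller complexes lie in $\cal S$ (closure under deletions and contractions, which holds since $\cal S = $ QE$\cap$QC and both classes are quasi-matroidal), so by induction each equals the corresponding evaluation of $T$, and then the recurrence for $T$ proved in the first part reassembles these into $T(\Psi, f(\Psi_{\text{coloop}}), f(\Psi_{\text{loop}}))$.

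The main obstacle I anticipate is the deletion--contraction activity-preservation claim: one must verify carefully that contracting or deleting $e$ does not alter the internal activity status of the \emph{other} elements of a basis, and that external activity of the remaining elements is unchanged. For internal activity this requires knowing that the set of bases obtainable by a single exchange is stable in the right way under deletion/contraction, which is exactly what the QE axiom and Lemma~\ref{lemma:Grading} provide; for external activity it requires that fundamental circuits not involving $e$ survive restriction and that circuits through $e$ correspond correctly to circuits of the contraction, which follows from the circuit description of $\Psi\slash\{e\}$ and $\Psi\setminus\{e\}$ together with Theorem~\ref{thm:QCFundCirc}. Handling the interaction — ensuring a basis element is internally passive in $\Psi$ if and only if it is in the appropriate minor, even when its passivity witness in $\Psi$ happened to involve $e$ — is the delicate point, and here the maximality of $e$ (so that $e$ is too big to be a useful witness for anything) is what makes it go through.
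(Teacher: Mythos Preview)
Your proposal is correct and follows essentially the same route as the paper: verify the recurrence by handling loops and coloops directly, then for the largest non-coloop vertex $e$ partition the bases according to whether $e\in B$, check that internal and external activities are preserved under the obvious bijections with the bases of the deletion and contraction, and finish universality by induction on $|E|$. The paper's own argument is terser but structurally identical; in particular it uses QE with $B_0$ to see that $e$ is internally passive whenever $e\in B$, and the maximality of $e$ among non-coloops to see that $e$ is never the minimum of a relevant circuit---exactly the two observations you single out. One small remark: you invoke Lemma~\ref{lemma:Grading} and Theorem~\ref{thm:QCFundCirc} for the activity-preservation step, but neither is actually needed here---the required equalities $IA(B,\Psi)=IA(B,\Psi\backslash e)$, $EA(B,\Psi)=EA(B,\Psi\backslash e)$ (and the analogous ones for the contraction) follow directly from the definitions together with the maximality of $e$, without appealing to the grading of $\Int(\Psi)$ or to uniqueness of fundamental circuits.
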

\begin{proof} Notice that $T(\Psi_{\text{coloop}}, x, y) = x$ and $T(\Psi_{\text{loop}}, x, y ) = y$. Every loop is externally active and every coloop is internally active. Thus the recurrence works for loops and coloops. If $v$ is the largest non-coloop, then the facets are divided into two types. 
\begin{compactenum}
\item[a.] $v\notin B$, that is, $B$ is a basis of $\Psi \backslash v$. Then $v$ is an internally passive element of $B$, and so $IA(B, \Psi) = IA(B, \Psi\backslash v)$. Circuits of $ \Psi\backslash v$ are circuits of $\Psi$ that do not contain $v$, thus $EA(B, \Psi) = EA(B, \Psi\backslash v)$. 
\item[b.] $v \in B$, that is, $B\slash \{v\}$ is a basis of $\Psi\slash v$. Then $v$ is internally passive in $B$ by QE with $B_0$ and it is straightforward that $IA(B, \Psi) = IA(B\slash \{v\}, \Psi \slash v)$. If $e\in E\backslash (B\backslash \{v\})$, then $v$ is not the smallest element of any circuit in $B\cup\{e\}$: $e$ is in any such circuit and if it is not a coloop then $e<v$. Hence $EA(B, \Psi) = EA(B\slash\{v\}, \Psi\slash v)$.
\end{compactenum}

The result follows directly. The fact that evaluations of the Tutte polynomial gives rise to all TG invariants is a straightforward inductive argument. 
\end{proof}

Next we provide a wealth of interesting invariants that satisfy such a recursion. The most prominent one comes from the theory of nbc complexes. They turn out to behave pretty similar to matroids. Recall from Corollary~\ref{cor:QEh-poly} that the $h$-polynomial is given by $h(\Psi, x) = \sum_{i=0}^r h_ix^i=\sum_{B \in \B} x^{|IP(B)|}$ for any rank-$r$ complex $\Psi$ in QE. If it also satisfies QC it is possible to compare the $h$-polynomial with the evaluation $T(\Psi, x, 1)$. 

\begin{lemma} The $h$-vector is a TG invariant for the class $\cal S$ the following sense: if $\Psi$ is a complex in $\cal S$ then
\[h(\Psi, x) = x^rT(\Psi, x^{-1}, 1)\]
\end{lemma}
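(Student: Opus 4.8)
The plan is to compare the two generating functions termwise over the set of bases $\B$ and to verify that the claimed identity $h(\Psi,x) = x^r T(\Psi, x^{-1}, 1)$ is equivalent to the statement that $|IP(B)| + |IA(B)| = r$ for every basis $B$, where $r = \rk(\Psi)$. Indeed, by Corollary~\ref{cor:QEh-poly} we have $h(\Psi,x) = \sum_{B\in\B} x^{|IP(B)|}$, while from the definition of the Tutte polynomial and setting $y=1$ we obtain $T(\Psi,x,1) = \sum_{B\in\B} x^{|IA(B)|}$, so $x^r T(\Psi,x^{-1},1) = \sum_{B\in\B} x^{r-|IA(B)|}$. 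Thus the identity holds as soon as each basis $B$ contributes the same monomial to both sides, i.e. $r - |IA(B)| = |IP(B)|$.

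First I would recall that for any basis $B$ of a rank-$r$ complex, the set $B$ itself has exactly $r$ elements and is partitioned as $B = IA(B) \sqcup IP(B)$ by the very definitions of internally active and internally passive elements. This gives $|IA(B)| + |IP(B)| = |B| = r$ immediately, with no hypotheses on $\Psi$ beyond purity. Hence the monomial $x^{|IP(B)|}$ appearing in $h(\Psi,x)$ equals $x^{r-|IA(B)|}$, which is precisely the monomial contributed by $B$ to $x^r T(\Psi,x^{-1},1)$. Summing over all $B \in \B$ yields the claimed equality.

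The only subtlety — and the place where the hypothesis $\Psi \in \cal S = $ QE$\cap$QC genuinely enters — is making sure that the two quantities $h(\Psi,x)$ and $T(\Psi,x,y)$ are well-defined and that the formula of Corollary~\ref{cor:QEh-poly} applies. The corollary requires $\Psi \in$ QE (or LEX), which holds since $\cal S \subseteq$ QE; and the Tutte polynomial $T$ is only defined on $\cal S$ in the preceding definition, so restricting to $\cal S$ is exactly what is needed. I do not expect any real obstacle here: once the termwise bijection $B \mapsto B$ is set up, the proof is a one-line counting identity $|IA(B)| + |IP(B)| = r$. If anything requires care, it is simply citing Corollary~\ref{cor:QEh-poly} for the left-hand side and unwinding the substitution $x \mapsto x^{-1}$, $y \mapsto 1$ on the right-hand side so that the exponents match. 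I would write the proof as: apply Corollary~\ref{cor:QEh-poly}, then for each basis use that $B = IA(B)\sqcup IP(B)$ has size $r$, then substitute and sum.
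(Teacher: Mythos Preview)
Your proposal is correct and follows exactly the paper's own argument: the paper's proof is the single sentence ``Use the equations $|IP(B)| + |IA(B)| = r$ and $T(\Psi, x, 1) = \sum_{B\in \B} x^{|IA(B)|}$,'' which is precisely the termwise comparison you describe via Corollary~\ref{cor:QEh-poly}.
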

\begin{proof} Use the equations $|IP(B)| + |IA(B)| = r$ and $T(\Psi, x, 1) = \sum_{B\in \B} x^{|IA(B)|}$. 
\end{proof}

This lemma implies that standard objects such as the (reverse) $f$-polynomial, the number of faces and the number of bases are evaluations of the Tutte polynomial. They are, however, not very surprising. A much stronger and interesting evaluation that has a rich combinatorial interpretation comes from the theory of nbc complexes. The characteristic polynomial of $\Psi$ can be defined as the standard evaluation of the Tutte polynomial and one might hope that there is a natural poset that replaces the lattice of flats to get the M\"obius function interpretation of the characteristic polynomial. This question is particularly interesting in the class of shifted complex and it is the source of inspiration for the definition of another quasi-matroidal class.  

\begin{thm} Let $\Psi = (E, \Delta)$ be a complex in $\cal S$. Then the lexicographic order of the bases of $\nbc(\Psi)$ is a shelling order of $\nbc(\Psi)$. 
\end{thm}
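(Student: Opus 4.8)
The plan is to show that the lexicographic order on the bases of $\nbc(\Psi)$ is a shelling order by mimicking the argument for $\Psi$ itself (Theorem~\ref{thm:lexShell}), but replacing each exchange step by one that stays inside $\nbc(\Psi)$. First I would recall the two facts that will do the work: since $\Psi\in\cal S = $ QE$\cap$QC, for every basis $B$ of $\Psi$ and every $e\in EA(B)$ there is a \emph{unique} circuit $Ci(B,e)\subseteq B\cup\{e\}$ (Theorem~\ref{thm:QCFundCirc}), and a basis $B$ of $\Psi$ lies in $\nbc(\Psi)$ exactly when it contains no broken circuit. The key observation I would establish is a ``nbc-exchange'' lemma: if $B_1,B_2$ are bases of $\nbc(\Psi)$ with $B_1<_{lex}B_2$ and $b:=\max(B_1\triangle B_2)\in B_2$, then the element $b'\in B_1\setminus B_2$ produced by the QE axiom so that $B_3:=(B_2\setminus\{b\})\cup\{b'\}$ is a basis of $\Psi$ actually yields a basis $B_3$ that again contains no broken circuit, i.e.\ $B_3\in\nbc(\Psi)$, and moreover $B_3<_{lex}B_2$. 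The second clause is automatic since $b'<b=\max(B_1\triangle B_2)$, so $B_3$ agrees with $B_2$ above $b$ and is smaller at position of $b$.

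The heart of the matter is therefore the claim that $B_3\in\nbc(\Psi)$. Here is the approach I would take. Because $b=\max(B_1\triangle B_2)$ and $b'\in B_1\setminus B_2$, we have $b'<b$, so $b'\notin B_2$ forces, via uniqueness of fundamental circuits, that $b'$ is externally active for $B_2$ only if it is the minimum of $Ci(B_2,b')$; but in fact we can say more. I would argue that $b'\in EP(B_2)$ must hold (otherwise $Ci(B_2,b')\setminus\{b'\}$ is a broken circuit; if $b'=\min Ci(B_2,b')$ then, since $Ci(B_2,b')\subseteq B_2\cup\{b'\}$ and the elements of $Ci(B_2,b')$ other than $b'$ lie in $B_2$ — contradicting that $B_2$ has no broken circuit only if that set is a broken circuit, which it is precisely when $b'$ is the min). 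So $b'\in EP(B_2)$, which means $Ci(B_2,b')$ contains an element $c<b'$; in particular $b$ is \emph{not} the minimum of $Ci(B_2,b')$ unless $b<b'$, which is false, so... I would organize this carefully: the point is that $B_3=(B_2\setminus\{b\})\cup\{b'\}$, and any broken circuit $\tilde C\subseteq B_3$ either avoids $b'$ — then $\tilde C\subseteq B_2$, contradicting $B_2\in\nbc$ — or contains $b'$. In the latter case the full circuit $C=\tilde C\cup\{\min C\}$ with $\min C<\min\tilde C\le$ (stuff in $B_3$) satisfies $C\subseteq B_3\cup\{\min C\}$; using the QC axiom to eliminate $b$ between $C$ and the fundamental circuit $Ci(B_2\cup\{b\}\text{-type})$ I would derive a circuit inside $B_2$, the desired contradiction. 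The precise bookkeeping of which element to eliminate, and verifying the inequality $c<\max C\triangle C'$ needed to invoke QC, is where the real care is required.

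Once the nbc-exchange lemma is in hand, the shelling proof is the usual two-case induction on $r-|B_1\cap B_2|$, exactly as in the proof of Theorem~\ref{thm:lexShell}: in Case~1 ($\max(B_1\triangle B_2)\in B_2$) the lemma gives $B_3<_{lex}B_2$ in $\nbc(\Psi)$ with $B_1\cap B_2\subseteq B_3\cap B_2 = B_2\setminus\{b\}$ directly; in Case~2 ($\max(B_1\triangle B_2)\in B_1$) the lemma (applied with the roles arranged so that the exchanged element enters $B_1$) produces $B_1'\in\nbc(\Psi)$ with $B_1\cap B_2\subsetneq B_1'\cap B_2$, and induction finishes. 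I would also need to check that $\nbc(\Psi)$ is pure of rank $r$ so that ``shelling order'' is meaningful — this follows since every basis of $\Psi$ that contains no broken circuit has size $r$ by purity of $\Psi$, and $\nbc(\Psi)$ is nonempty because $B_0$ contains no broken circuit (a broken circuit $C\setminus\{\min C\}\subseteq B_0$ would let us exchange $\min C$ into $B_0$, contradicting minimality, by the uniqueness of fundamental circuits from Theorem~\ref{thm:QCFundCirc}).

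The main obstacle I anticipate is the nbc-exchange lemma, specifically verifying that after a QE-exchange the resulting basis still avoids all broken circuits; this is where both axioms QE and QC must interact, and the delicate part is choosing the right pair of circuits and the right common element to feed into the QC axiom so that the order hypothesis $c<\max(C_1\triangle C_2)$ of QC is actually satisfied. Everything else — purity, nonemptiness, the inductive skeleton of the shelling argument — is routine given the earlier results in the paper.
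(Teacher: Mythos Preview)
Your nbc-exchange lemma is false as stated, and this is a genuine gap. Take the rank-$3$ matroid on $\{1,2,3,4,5\}$ represented over $\mathbb{R}$ by $1=e_1,\ 2=e_2,\ 3=e_3,\ 4=e_2+e_3,\ 5=e_1+e_2$. Its circuits are $\{2,3,4\},\ \{1,2,5\},\ \{1,3,4,5\}$, so the broken circuits are $\{3,4\},\ \{2,5\},\ \{3,4,5\}$. The bases $B_1=\{1,2,3\}$ and $B_2=\{1,4,5\}$ are both nbc, and $b=\max(B_1\triangle B_2)=5\in B_2$. The QE axiom allows $b'=3$, giving $B_3=\{1,3,4\}$, which contains the broken circuit $\{3,4\}$. (The other choice $b'=2$ does give an nbc basis, but nothing in the QE axiom forces that choice.) Your attempted QC argument reflects this: you propose to ``eliminate $b$'' between two circuits, but $b\notin B_3$ and $b$ is not in the offending circuit $C$, so there is no common element $b$ to eliminate; and once $b'\in EP(B_2)$ you lose the uniqueness of fundamental circuits from Theorem~\ref{thm:QCFundCirc}, so the bookkeeping you sketch cannot be carried out.

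The paper's route (which is Bj\"orner's classical argument) avoids this entirely by decoupling the two steps. First use Theorem~\ref{thm:lexShell} in $\Psi$, not in $\nbc(\Psi)$: for nbc bases $B_1<_{lex}B_2$ this produces some $b\in B_2$ and some basis $B'<_{lex}B_2$ of $\Psi$ (not required to be nbc) with $B_1\cap B_2\subseteq B'\cap B_2=B_2\setminus\{b\}$. Second, replace $B'$ by the lex-smallest basis $B_3$ of $\Psi$ containing $B_2\setminus\{b\}$. If $B_3=(B_2\setminus\{b\})\cup\{b''\}$ contained a broken circuit $C\setminus\{c\}$, then $b''\in C$ (since $B_2$ is nbc) and $c<b''$ is externally active for $B_3$, so by Theorem~\ref{thm:QCFundCirc} $C$ is the unique circuit in $B_3\cup\{c\}$; hence $(B_2\setminus\{b\})\cup\{c\}$ is a basis with $c<b''$, contradicting minimality of $b''$. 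Thus $B_3$ is nbc and $B_3\le_{lex}B'<_{lex}B_2$. The point is that the ``nbc-preserving'' step uses lex-minimality, not an exchange with $B_1$; your Case~2 induction is then unnecessary.
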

\begin{proof} The first paragraph of Bj\"orner's argument in \cite[Lemma 7.3.2]{MR1165544} is replaced by Theorem~\ref{thm:lexShell}. The second paragraph applies verbatim to get the result. 
\end{proof}

Now we show that the polynomial $f(\Psi, x) = \sum_{B \in \B(\nbc(\Psi))} x^{|IA(B, \nbc(\Psi))|}$ is a TG invariant. It is easy check that $f(\Psi_{\text{coloop}}, x) = x$ and $f(\Psi_{\text{loop}}, x)=0$. This implies that  $f(\Psi, x) = T(\Psi, x, 0)$ just as in the classical case. 

\begin{thm}\label{thm:nbcTutte} The polynomial invariant $f$ is a TG invariant. Consequently, if $\Psi$ is a complex in $\cal S$ and $B$ is an nbc basis of $\Psi$, then $IA(B, \Psi) = IA(B, \nbc(\Psi))$. 
\end{thm}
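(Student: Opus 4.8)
The plan is to prove that $f(\Psi,x)=\sum_{B\in\B(\nbc(\Psi))}x^{|IA(B,\nbc(\Psi))|}$ satisfies the deletion–contraction recurrence \eqref{eqn:TG} for the class $\cal S=$ QE$\cap$QC, which (together with the base-case computations already recorded: $f(\Psi_{\text{coloop}},x)=x$, $f(\Psi_{\text{loop}},x)=0$) identifies it with $T(\Psi,x,0)$ by the universality theorem. Since $T(\Psi,x,0)$ only counts bases $B$ with $EA(B,\Psi)=\emptyset$, i.e. exactly the nbc bases of $\Psi$, the monomial of such a $B$ in $T(\Psi,x,0)$ is $x^{|IA(B,\Psi)|}$. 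Matching this against the monomial $x^{|IA(B,\nbc(\Psi))|}$ coming from $f$ will force $IA(B,\Psi)=IA(B,\nbc(\Psi))$ for every nbc basis $B$, which is the second assertion. So the whole theorem reduces to verifying the recurrence for $f$.

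First I would dispose of the loop/coloop case. If $e$ is a loop then $e$ is in no circuit-free configuration and every basis of $\Psi$ avoids $e$, so $\B(\nbc(\Psi))=\B(\nbc(\Psi|_{E\setminus\{e\}}))$; moreover $\{e\}$ is itself a circuit, hence a broken circuit is empty and $\nbc(\Psi|_{\{e\}})$ has no bases, giving $f(\Psi|_{\{e\}},x)=0$ and the product rule holds. If $e$ is a coloop, $e$ lies in every basis and in no circuit, so nbc bases of $\Psi$ correspond bijectively to nbc bases of $\Psi|_{E\setminus\{e\}}$ by removing $e$, and $e$ is internally active in every basis containing it (there is no $e'<e$ outside the basis completing it), so $|IA(B,\nbc(\Psi))|=|IA(B\setminus\{e\},\nbc(\Psi|_{E\setminus\{e\}}))|+1$; combined with $f(\Psi|_{\{e\}},x)=x$ this gives the product formula. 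For both of these I would lean on the fact (from the constructions theorems) that $\cal S$ is closed under restriction and that circuits of $\Psi|_U$ are exactly circuits of $\Psi$ inside $U$.

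The main case is $v$ the largest non-coloop vertex; here I would mimic the split used in the proof that $T$ is a TG invariant, sorting nbc bases of $\Psi$ by whether they contain $v$. Bases $B$ with $v\notin B$ are bases of $\Psi\setminus v$ (using the notation of the excerpt: $\Psi\slash v$ is the deletion): a subset of $E\setminus\{v\}$ contains a broken circuit of $\Psi$ iff it contains a broken circuit of $\Psi\slash v$ (broken circuits not involving $v$ are common to both, and since $v$ is the largest vertex it can never be the smallest element of a circuit, so no broken circuit of $\Psi$ uses $v$ as its deleted-minimum in a way that matters), and $IA(B,\Psi)$ equals $IA(B,\Psi\slash v)$ because $v\in IP(B)$ for such $B$ (QE with $B_0$, exactly as in the Tutte proof). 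Bases $B$ with $v\in B$ correspond to bases $B\setminus\{v\}$ of the contraction $\Psi\backslash v$; here I would use Theorem~\ref{thm:QCFundCirc} and the fundamental-circuit structure of QC to check that $B$ contains no broken circuit of $\Psi$ iff $B\setminus\{v\}$ contains no broken circuit of $\Psi\backslash v$ (circuits of $\Psi\backslash v$ are in bijection with circuits of $\Psi$ through $v$, and one must rule out that deleting $v$ could destroy or create a broken circuit — this is where QC does real work), and that $IA(B,\Psi)=IA(B\setminus\{v\},\Psi\backslash v)$ again as in the Tutte computation. Summing the two contributions gives $f(\Psi,x)=f(\Psi\backslash v,x)+f(\Psi\slash v,x)$.

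The hard part will be the nbc-bookkeeping in the $v\in B$ branch: showing that the bijection $B\leftrightarrow B\setminus\{v\}$ between bases through $v$ and bases of the contraction restricts to a bijection on \emph{nbc} bases. Unlike in a matroid, broken circuits need not interact cleanly with contraction, so I expect to need Theorem~\ref{thm:QCFundCirc} (uniqueness of the fundamental circuit) together with the earlier lemma characterizing nbc-independence by the absence of broken circuits, and possibly an argument that because $v$ is the \emph{largest} vertex it is never a minimal element of a circuit, so the broken circuits of $\Psi$ that live inside a basis $B\ni v$ are precisely the broken circuits of $\Psi\backslash v$ living inside $B\setminus\{v\}$. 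Once that correspondence is pinned down, the internal-activity equality and the final summation are routine, and the displayed consequence $IA(B,\Psi)=IA(B,\nbc(\Psi))$ falls out of comparing $f=T(\Psi,x,0)$ coefficientwise.
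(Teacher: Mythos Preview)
Your architecture matches the paper's: verify the TG recurrence for $f$, identify $f=T(\cdot,x,0)$ by universality, then compare monomials using the easy inclusion $IA(B,\Psi)\subseteq IA(B,\nbc(\Psi))$ to force equality. The case split on $v$ and the identification of the nbc--contraction bijection as the crux are right, and the tools you name (Theorem~\ref{thm:QCFundCirc} and the nbc-independence lemma) are exactly what the paper uses.

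Two things need tightening. First, throughout the recurrence you must track $IA(B,\nbc(\Psi))$, not $IA(B,\Psi)$; these are not yet known to coincide, so ``exactly as in the Tutte proof'' does not suffice. In particular, for $v\in B$ you need $v\in IP(B,\nbc(\Psi))$, i.e.\ an \emph{nbc} basis $B'<_{\lx}B$ containing $B\setminus\{v\}$, and this does not follow from the bijection $\nbc(\Psi)/v\cong\nbc(\Psi/v)$ alone. The paper proves it separately: take $B'$ to be the lex-smallest basis of $\Psi$ containing $B\setminus\{v\}$ and use fundamental-circuit uniqueness to show $B'$ is nbc --- a broken circuit $C\setminus\{c\}\subseteq B'$ would let one replace some $b\in B'\setminus B$ by $c<b$, contradicting minimality of $B'$. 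Second, two slips: in the $v\notin B$ branch the claim ``$v\in IP(B)$'' is impossible since $IP(B)\subseteq B$; and in the loop case $\emptyset$ is a broken circuit of $\Psi$ itself, so $\nbc(\Psi)$ is void and your equality $\B(\nbc(\Psi))=\B(\nbc(\Psi|_{E\setminus\{e\}}))$ is false in general, though the product rule survives because $f(\Psi_{\text{loop}})=0$.
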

\begin{proof}
If there is a loop $v$, then $\nbc(\Psi)$ is the void complex since $\emptyset$ is a broken circuit; since $f(\Psi_{\text{loop}}) = 0$, the recurrence holds directly. If $v$ is a coloop, then it is externally active for every basis, thus $IA(B, \nbc(\Psi)) = \{v\}\cup IA(B\backslash \{v\},\nbc(\Psi)|_{E-v})$. Assume then that $\Psi$ has no loops (otherwise $\nbc(\Psi)$ is trivial).

Let $v$ be the largest non-coloop vertex of $\Psi$ and let $B$ be a basis of $\nbc(\Psi)$. There are two cases: 
\begin{compactenum}
\item[Case 1.] $v\not\in B$. Then $u \in IA(B, \nbc(\Psi))$ if and only if $B$ is the smallest lexicographic basis that contains $B\backslash\{u\}$. This, however, is equivalent to a statement in $\Psi \backslash v$ since adding $v$ to $B\backslash\{u\}$ yields a basis bigger than $B$ in lexicographic order. Thus $IA(B, \nbc(\Psi)) = IA(B, \nbc(\Psi\backslash v))$. 

\item[Case 2.] $v\in B$. First we claim that $v \in IP(B, \nbc(\Psi))$. Note that $v\in IP(B, \Psi)$, because it is the largest vertex and we can use the quasi-exchange axiom with $B_0$ and $B$. Let $B'$ be the smallest lexicographic basis of $\Psi$ that contains $B\backslash v$. We claim that $B'$ is a basis of $\nbc(\Psi)$. Note that $v\notin B'$ since applying QE with $B$ and $B_0$ allows us to remove $v$ to get a smaller basis. Let $b\in B'\backslash B$ and assume, for the sake of contradiction, that there is a broken circuit $\gamma = C-c \subseteq B'$. Since $B$ is an nbc basis, $\gamma$ is not contained in $B$, thus $b\in C-c$ and $c<b$. Notice that $c$ is externally active and hence $C$ is the unique circuit in $B'\cup C$ by Theorem~\ref{thm:QCFundCirc}. Therefore $(B'\backslash \{b\})\cup \{c\}$ is a basis that contains $B\backslash \{v\}$ and is smaller in lexicographic order than $B'$, which contradicts the choice of $B'$.

Next we notice that $B\backslash \{v\}$ is a basis of $\nbc(\Psi\slash v)$. To prove this assume that $u\in EA(B\backslash \{v\}, \Psi\slash v)$ and let $C$ be the unique circuit of $\Psi \slash v$ contained in $(B\backslash \{v\})\cup\{u\}$. Then one out of $C$ and $C\cup\{v\}$ is a circuit in $\Psi$. Note that $u$ is a vertex of $\Psi$ and since it belongs to a circuit, it is not a coloop. It follows that $u$ is the smallest element in $C\cup\{v\}$ and hence in the corresponding circuit $\hat C$ that contains it. This circuit $\hat C$ is contained $B\cup\{u\}$, and so $u\in EA(B, \Psi)$. This is a contradiction, since $B$ is an nbc basis in $\Psi$.

Furthermore, notice that if $B'$ is a basis of $\nbc(\Psi\slash \{v\})$, then $B'\cup \{v\}$ is a basis of $\Psi$. If there is some element $u$ in $EA(B'\cup\{v\}, \Psi)$, then there is a circuit $C\subseteq B'\cup \{u, v\}$ for which $u$ is the minimal element and $C\cap B'$ is a circuit in $\Psi\slash v$ for which $u$ is the minimal element, contradicting the fact that $B'$ is an nbc basis of $\Psi\slash v$. 

The previous two paragraphs imply that $\nbc(\Psi \slash v ) = \nbc(\Psi) \slash v$. Now notice that  $u \in IA(B, \nbc(\Psi))$ if and only if $B$ is the smallest lexicographic basis of $\nbc(\Psi)$ containing $B\backslash\{u\}$, and since $v\in B\backslash\{u\}$ this is equivalent to saying that $B\backslash\{v\}$ is the smallest lexicographic basis of $\nbc(\Psi)\slash v = \nbc(\Psi\slash v)$ that contains $B\backslash\{u,v\}$. This, in turn, equivelent to saying that  $u$ is an element of $IA(B\backslash \{v\}, \nbc(\Psi\slash v))$.
\end{compactenum}

Computing the activities polynomial yields: 
\begin{eqnarray*} f(\Psi, x)  &= & \sum_{B\in \B(\nbc(\Psi))} x^{|IA(B,\nbc(\Psi))|} \\ 
&=& \sum_{v\in B\in \B(\nbc(\Psi))} x^{|IA(B,\nbc(\Psi))|} + \sum_{v\notin B \in \B(\nbc(\Psi))} x^{|IA(B,\nbc (\Psi))|} \\ 
&=& \sum_{B'\in \B(\nbc(\Psi\slash v))} x^{|IA(B',\nbc(\Psi\slash v))|} + \sum_{ B''\in \B(\nbc(\Psi\backslash v))} x^{|IA(B'', \nbc(\Psi\backslash v))|} \\ 
&=& f(\Psi\slash v, x) + f(\Psi\backslash v, x).
\end{eqnarray*}

It follows that $f(\Psi, x) = T(\Psi, x, 0)$. Notice that $T(\Psi, x, 0 ) = \sum_{B\in \B, EA(B) = \emptyset} x^{|IA(B, \Psi)|}$. Also, it is straightforward that $IA(B, \Psi) \subseteq IA(B, \nbc(\Psi))$ for every basis of $\nbc(B)$ and hence, by the polynomial equality, $IA(B, \Psi) = IA(B, \nbc(\Psi))$
\end{proof}

In standard terms the previous theorem can be rewritten as follows: 

\begin{cor}\label{cor:nbc-hvect}
If $\Psi$ is a rank-$r$ complex in QE$\cap$QC, then \begin{equation}\label{eqn:nbc-hvect}h(\nbc(\Psi),x) = x^rT(\Psi,x^{-1}, 0).\end{equation}
\end{cor}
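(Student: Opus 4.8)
The plan is to deduce Corollary~\ref{cor:nbc-hvect} directly from Theorem~\ref{thm:nbcTutte} by the same bookkeeping that produced Corollary~\ref{cor:QEh-poly}. First I would invoke the theorem from the previous section asserting that for a complex $\Psi$ in $\cal S = $ QE$\cap$QC, the lexicographic order on the bases of $\nbc(\Psi)$ is a shelling order; combined with Theorem~\ref{thm:lexShell} applied \emph{inside} $\nbc(\Psi)$ (which is itself pure and shellable here), the restriction set of an nbc basis $B$ in this shelling is exactly $IP(B,\nbc(\Psi))$. Hence, by the general shelling identity $h(\Gamma,x) = \sum_B x^{|\RR(B)|}$ recalled in the Preliminaries, we get $h(\nbc(\Psi),x) = \sum_{B \in \B(\nbc(\Psi))} x^{|IP(B,\nbc(\Psi))|}$.

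Next I would rewrite the exponent. Since $\nbc(\Psi)$ has rank $r$ (the nbc bases of $\Psi$ have the same cardinality $r$ as the bases of $\Psi$), every basis $B$ of $\nbc(\Psi)$ satisfies $|IP(B,\nbc(\Psi))| + |IA(B,\nbc(\Psi))| = r$, so $|IP(B,\nbc(\Psi))| = r - |IA(B,\nbc(\Psi))|$. Substituting gives $h(\nbc(\Psi),x) = \sum_{B \in \B(\nbc(\Psi))} x^{r - |IA(B,\nbc(\Psi))|} = x^r \sum_{B \in \B(\nbc(\Psi))} x^{-|IA(B,\nbc(\Psi))|}$. Now the universality formula of Theorem~\ref{thm:nbcTutte} enters: it established that the activities polynomial $f(\Psi,x) = \sum_{B \in \B(\nbc(\Psi))} x^{|IA(B,\nbc(\Psi))|}$ equals $T(\Psi,x,0)$. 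Therefore the sum $\sum_{B} x^{-|IA(B,\nbc(\Psi))|}$ is precisely $f(\Psi,x^{-1}) = T(\Psi,x^{-1},0)$, and we conclude $h(\nbc(\Psi),x) = x^r T(\Psi,x^{-1},0)$, which is \eqref{eqn:nbc-hvect}.

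The only genuine points to check carefully, rather than any deep obstacle, are the two inputs I am treating as black boxes: that $\nbc(\Psi)$ is pure of rank $r$ (so that the complementation $|IP| + |IA| = r$ is uniform across all nbc bases) and that the lexicographic shelling of $\nbc(\Psi)$ has restriction sets equal to the internally passive sets \emph{relative to} $\nbc(\Psi)$. Both follow from the theorem on lexicographic shellability of $\nbc(\Psi)$ proved just above in this section together with Theorem~\ref{thm:nbcTutte}, which (via the identity $IA(B,\Psi) = IA(B,\nbc(\Psi))$) also guarantees the activities are computed consistently. So the main task is simply to assemble these three ingredients in the right order; once that is done the corollary is a one-line substitution and is genuinely just the ``standard terms'' restatement of Theorem~\ref{thm:nbcTutte}, exactly as the sentence preceding it advertises.
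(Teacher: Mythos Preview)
Your argument is correct and is exactly the intended derivation: the paper states the corollary without proof as the ``standard terms'' restatement of Theorem~\ref{thm:nbcTutte}, and your bookkeeping (lex-shellability of $\nbc(\Psi)$ $\Rightarrow$ $\RR(B)=IP(B,\nbc(\Psi))$ $\Rightarrow$ $h(\nbc(\Psi),x)=x^r f(\Psi,x^{-1})=x^r T(\Psi,x^{-1},0)$) is precisely that. One minor phrasing point: Theorem~\ref{thm:lexShell} as stated assumes QE, which $\nbc(\Psi)$ need not satisfy, so you should invoke only the general fact (implicit in its proof and in Bj\"orner's argument) that whenever the lexicographic order is a shelling the restriction sets coincide with the internally passive sets --- but you essentially say this in your last paragraph.
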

%%%---------------------------
\section{A refinement of Stanley's conjecture}\label{sct:Stanley}
The purpose of this section is to propose an extension of Stanley's $h$-vector conjecture to the setting complexes in QI$\cap$QE$\cap$FBP. The lexicographic order of the bases of an ordered matroid has a family of well understood restriction sets whose size generating function gives the $h$-vector of the independence complex. Techniques from combinatorial topology suggest that a plausible approach to understanding the properties of such structures is by recursive construction adding one basis at a time. However, removing bases from the matroid, even in the correct order suggested by the shelling, produces complexes that are not matroidal. One of the advantages of working in the more general setting is that this problem disappears. Removing facets consistently with the shelling order preserves the validity of the axioms and gives tools to prove theorems by induction on the number of bases.

%The first step toward achieving this is to generalise the results of Las Vergnas on the int poset of the bases of a matroid. The correct setting to that is that of complexes satisfies QE and QI. 

%\begin{lemma}\label{lem:firstFacet} Let $\Psi = (E, \Delta)$ be a complex satisfying QI and QE. Then $B_{I,0}\subseteq B_0$ for every independent set $I$. 
%\end{lemma}

%\begin{thm} Let $\Psi = (E, \Delta)$ be an ordered complex satisfying QI and QE and let $\J$ be an order ideal of $\Gale(\Psi)$. Then $\Psi[\J]$ satisfies QE and QI. 
%\end{thm}
%\begin{proof} Notice that by \ref{thm:QEGale} the complex $\Psi[\J]$ satisfies QE, so it suffices to show that it satisfies QI. By a straightforward inductive argument it is enough to consider the case when $\J$ is $\Gale(\Psi)$ minus a maximal basis. Assume that $I_1, I_2$ are independents and $I\subseteq I_1\cap I_2$ that satisfy the QI assumptions. We may assume that $I = \emptyset$ by passing to $\Psi \backslash I$. Then $I_1\subset B_0$. 
%\end{proof}
%\begin{rem}
%\end{rem}
For the remainder of this section, unless stated otherwise, $\Psi = (E, \Delta)$ denotes a rank-$r$ complex in QE$\cap$QI$\cap$FBP. The goal is to mimic conjecture  3.10 from \cite{MR3339029} and build a finer conjecture in this larger class. In order to do that it is necessary to introduce some extra notation. Fix a subset $A$ of $E$ and consider the splitting of an independent $I$ of $\Delta$ in the two sets $I\cap A$ and $I\backslash A$. This induces a partition of the independent sets according to their part that is not in $A$. For any $F\in \Delta|_{E-A}$, let $\Psi_{A,I}:= (\Psi\slash I)|_A = (A, \Delta_{A,I})$. The set of independents of $\Delta$ is the (disjoint) union of the independents of the $\Delta_{A,I}$ sets and writing this fact in terms of $h$-polynomials we derive the following lemma.

\begin{lemma}Let $\Psi = (E,\Delta)$ be an arbitrary pure ordered complex. For an arbitrary $A\subset E$, we have that \begin{equation}h(\Psi, x) = \sum_{I\in \Delta|_{E\backslash A}} (1-x)^{(d-|I|) - \rk\,  \Psi_{A,I}}h(\Psi_{A,I}, x).\end{equation}
In particular, if $\rk\, \Psi_{A,I} = d-|I|$ for every $I\in \Delta|_{E\backslash A}$, then \begin{equation}\label{decomposition}h(\Psi, x) = \sum_{I\in \Delta|_{E\backslash A}}x^{|I|} h(\Delta_{A,I},x).\end{equation}
\end{lemma}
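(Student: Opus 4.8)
The plan is to unwind the definitions and track how the $h$-polynomial behaves under the partition of $\Delta$ indexed by faces $I \in \Delta|_{E \setminus A}$. First I would fix the partition: every independent set $J \in \Delta$ is written uniquely as $J = I \sqcup (J \cap A)$ where $I = J \setminus A \in \Delta|_{E \setminus A}$ and $J \cap A$ is an independent set of $\Psi \slash I$ restricted to $A$, i.e.\ a face of $\Delta_{A,I}$. (One should check the elementary point that $J \cap A \in \Delta_{A,I}$ iff $J \in \Delta$, which is immediate from the definitions of link and restriction.) This gives a disjoint-union decomposition of the faces of $\Delta$, and hence an additive decomposition of the $f$-polynomial indexed by $I$.

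The next step is to pass from $f$-polynomials to $h$-polynomials carefully, since $h$ depends on the ambient rank. For the rank-$r$ complex $\Psi$ the defining relation is $h(\Psi,x) = \sum_j f_j x^j (1-x)^{d-j}$ with $d = r$. For each $I$, the subcomplex $\Psi_{A,I} = (\Psi\slash I)|_A$ has its own rank, call it $\rho_I := \rk \Psi_{A,I}$, and $h(\Psi_{A,I},x) = \sum_k f_k(\Psi_{A,I}) x^k (1-x)^{\rho_I - k}$. I would take the identity $f(\Psi,x) = \sum_I x^{|I|} f(\Psi_{A,I},x)$ coming from the partition (the factor $x^{|I|}$ records the $|I|$ elements of $I$ that are added to each face of $\Delta_{A,I}$), substitute into the definition of $h(\Psi,x)$, and re-expand. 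Concretely, a face of $\Delta$ lying over $I$ with $k$ elements in $A$ has total rank $|I| + k$, so its contribution to $h(\Psi,x)$ is $x^{|I|+k}(1-x)^{d - |I| - k}$; summing the $f_k(\Psi_{A,I})$ of these against $x^{|I|+k}(1-x)^{d-|I|-k}$ and pulling out $x^{|I|}(1-x)^{(d-|I|)-\rho_I}$ leaves exactly $\sum_k f_k(\Psi_{A,I}) x^k (1-x)^{\rho_I - k} = h(\Psi_{A,I},x)$. This yields the first displayed formula.

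The ``in particular'' clause is then immediate: if $\rho_I = d - |I|$ for every $I \in \Delta|_{E\setminus A}$, the exponent $(d-|I|) - \rho_I$ is zero, the factors $(1-x)^{0} = 1$ disappear, and one is left with $h(\Psi,x) = \sum_{I} x^{|I|} h(\Delta_{A,I},x)$, which is \eqref{decomposition}. I would also remark that the hypothesis $\rho_I = d - |I|$ is automatic exactly when each $\Psi_{A,I}$ attains the full ``expected'' rank inside the contraction, which is the condition under which no spurious $(1-x)$ factors arise.

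The only mildly delicate point — and the step I would be most careful about — is the bookkeeping of exponents in the re-expansion: one must confirm that the power of $x$ contributed by the ``$I$-part'' of a face is precisely $|I|$ and that the complementary power of $(1-x)$ is $d$ minus the total rank of the face (not minus $k$ alone), so that the $(1-x)$-exponent splits cleanly as $\bigl((d-|I|)-\rho_I\bigr) + (\rho_I - k)$. Once that identity of exponents is pinned down, the rest is a term-by-term comparison with no real content. Note in particular that purity of $\Psi$ is used only to make $d$ a well-defined single rank; the individual $\Psi_{A,I}$ need not be pure for the first formula, though one typically applies the clean version \eqref{decomposition} in situations (such as the QE$\cap$QI$\cap$FBP setting of this section, via Lemma~\ref{lemma:contraction}) where the rank condition $\rho_I = d - |I|$ does hold.
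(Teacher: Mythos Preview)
Your argument is correct and follows exactly the paper's approach: establish the $f$-polynomial identity $f(\Psi,x)=\sum_I x^{|I|}f(\Psi_{A,I},x)$ from the partition of faces, then convert to $h$-polynomials by the substitution $h(\Psi,x)=(1-x)^d f(\Psi,\tfrac{x}{1-x})$. Your explicit bookkeeping in fact shows that the summand should be $x^{|I|}(1-x)^{(d-|I|)-\rho_I}h(\Psi_{A,I},x)$; the missing $x^{|I|}$ in the first displayed formula of the lemma is a typo in the paper, as is confirmed by the ``in particular'' clause where that factor reappears.
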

\begin{proof}
The following identity for the $f$-polynomial follows from the splitting of the faces: 
\begin{equation}f(\Psi, x) = \sum_{I\in \Delta|_{E\backslash A}} x^{|I|}f(\Psi_{A,I}, x). \end{equation}
Transforming this into an $h$-vector equality yields the result. 
\end{proof}
If the complex $\Psi$ is either in QE or QI, then the refined decomposition of equation (\ref{decomposition}) holds in many cases: 
\begin{cor}\label{coro:h-vector} The refined decomposition of the $h$-vector holds for $\Psi$ and $A$ in the following cases: 
\begin{compactenum}
\item[i.] $\Psi$ is in QE and $A$ contains all coloops and all elements smaller than or equal to the smallest non-coloop of $B_0$. 
\item[ii.] $\Psi$ is in QI and $A$ contains $B_0$. 
\end{compactenum} 
\end{cor}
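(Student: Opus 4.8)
The plan is to establish the key hypothesis of the previous lemma, namely that $\rk\,\Psi_{A,I} = d - |I|$ for every $I \in \Delta|_{E\backslash A}$, in each of the two cases; once this is known the refined decomposition \eqref{decomposition} follows immediately from the lemma. Since $\rk\,\Psi_{A,I} \le d - |I|$ always holds (one cannot have an independent set of $\Psi\slash I$ of size exceeding $d-|I|$), the content is the reverse inequality: every independent set of $\Psi_{A,I} = (\Psi\slash I)|_A$ extends to one of size $d - |I|$ inside $A$, equivalently $\Psi_{A,I}$ is pure of full rank.

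For case (ii), suppose $\Psi$ is in QI and $B_0 \subseteq A$. Fix $I \in \Delta|_{E\backslash A}$ and an independent set $J$ of $\Psi_{A,I}$, so $J \subseteq A$ and $I \cup J \in \Delta$ with $I \cap J = \emptyset$. I want to enlarge $J$ to a basis of $\Psi_{A,I}$. The idea is to run the QI axiom repeatedly with the pair $(B_0, I\cup J)$, using $I \cup J$ itself (or rather the empty witness, or a witness inside $(I\cup J)\cap B_0$) to satisfy the hypothesis: because $B_0$ is the smallest lexicographic basis, $B_0 \subseteq B_{\emptyset,0}$, so taking $I = \emptyset$ as the witness the condition "$B_0 \backslash \emptyset \subseteq B_{\emptyset,0}$" is trivially true. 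Then QI produces $e \in B_0 \backslash (I\cup J)$ with $I \cup J \cup \{e\} \in \Delta$; since $e \in B_0 \subseteq A$ and $e \notin I$ (as $I \subseteq E\backslash A$), the element $e$ lies in $A\backslash I$, so $J \cup \{e\}$ is independent in $\Psi_{A,I}$. Iterating until no larger set exists, and checking at each stage that the larger independent set $I \cup J'$ still satisfies the QI hypothesis with a witness inside $B_0$, we reach an independent set of $\Psi_{A,I}$ of size $|B_0| - |I| = d - |I|$. Care must be taken that once $J$ grows, the relevant witness set $I' \subseteq (I\cup J')\cap B_0$ keeps $B_0 \backslash I' \subseteq B_{I',0}$; here it is cleanest to always use $I' = \emptyset$, which reduces the requirement to $B_0 \subseteq B_{\emptyset,0}$, true by minimality of $B_0$. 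That is the main point to verify carefully.

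For case (i), suppose $\Psi$ is in QE and $A$ contains all coloops and every element $\le$ the smallest non-coloop of $B_0$. The strategy mirrors Theorem~\ref{thm:QEconst}(iii): given $I \in \Delta|_{E\backslash A}$ and an independent set $J$ of $\Psi_{A,I}$, extend $I\cup J$ to a basis $B$ of $\Psi$ (possible since $\Psi$ is pure), and then use the QE axiom to push $B$ down in the Gale/lexicographic order, swapping out any element of $B\backslash A$ for an element of $B_0$. The hypothesis on $A$ guarantees $\min(E\backslash A) > \max(B_0 \backslash \{\text{coloops}\})$ in the appropriate sense, so if $b \in B\backslash A$ then $b$ is a non-coloop exceeding $\max(B_0\backslash B)$, and QE applies to replace $b$ by some $b' \in B_0 \backslash B \subseteq A$. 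Each such swap keeps $I \cup J \subseteq B$ because no element of $I\cup J$ is removed: elements of $I$ are outside $A$ but are never the removed element once we only remove elements that... — here is a subtlety: $I \subseteq E\backslash A$, so an element of $I$ could in principle be a candidate $b$ to remove. This is avoided because $I$ is a face of $\Psi\slash(\text{nothing})$ contracted, i.e. we are extending $I\cup J$ and we only ever swap elements of $B \backslash (I\cup J)$; to see that the swapped-out element can be chosen outside $I$, note that the QE swap removes the \emph{specific} element $b = \max(B\triangle B_0)$, and if that maximum were in $I$ we would instead be in the situation where we first clear the larger part — so we should order the swaps by removing the largest non-$(I\cup J)$ element of $B\backslash A$ not yet handled, iterating. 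The delicate part here is arguing termination and that we never need to remove an element of $I$: this follows because $I\cup J \in \Delta$ already and $B_0\cup(\text{stuff})$ arguments show the final basis contained in $A$ still contains $I\cup J$, but it requires the same bookkeeping as in Theorem~\ref{thm:QEconst}(iii). Once $B \subseteq A$ (except for $I$, which sits in $\Psi\slash I$ after contraction — precisely, $B\backslash I \subseteq A$ is a basis of $\Psi_{A,I}$ containing $J$), we get $\rk\,\Psi_{A,I} \ge |B| - |I| = d - |I|$, completing the proof. The main obstacle, in both cases, is the careful iteration/termination argument ensuring the chosen exchanges never disturb $I\cup J$ and always land inside $A$; the underlying axioms QI and QE do all the real work, but the bookkeeping is where a reader will want detail.
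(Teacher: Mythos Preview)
Your approach is the same as the paper's: verify the hypothesis of the preceding lemma by showing $\rk \Psi_{A,I} = d - |I|$ for every $I \in \Delta|_{E\backslash A}$, using QI (case~ii) or QE (case~i) to produce a basis $B \supseteq I$ with $B\backslash I \subseteq A$.

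The one substantive simplification you are missing is that you only need \emph{one} independent set of $\Psi_{A,I}$ of size $d-|I|$; you do not need to extend an arbitrary $J$. (Your sentence ``equivalently $\Psi_{A,I}$ is pure of full rank'' conflates two things: purity is strictly stronger than having the right rank, and only the latter is required by the lemma.) The paper therefore takes $J=\emptyset$ throughout. In case~(ii) one simply applies QI with $I_1=B_0$, $I_2=I$, and witness $\emptyset$, iterating until $I$ has been extended to a basis $B$ with $B\backslash I \subseteq B_0 \subseteq A$. In case~(i) one takes any basis $B'\supseteq I$ and, for each $b\in B'\backslash(A\cup I)$, applies QE with $B'$ and $B_0$ to replace $b$ by an element of $B_0$; only elements outside $I$ are ever removed, so the final basis still contains $I$. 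Once you drop the superfluous $J$, your worry about ``swaps disturbing $I\cup J$'' evaporates: you are only protecting $I$, and you never choose an element of $I$ as the one to remove.

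A small correction on the QE axiom: it does not force you to remove $\max(B\triangle B_0)$. You may remove \emph{any} $b_1 \in B'\backslash B_0$ with $b_1 > \max(B_0\backslash B')$; the hypothesis on $A$ in case~(i) is exactly what guarantees every $b \in B'\backslash(A\cup I)$ satisfies this inequality, so each such $b$ is directly eligible for the swap.
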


\begin{proof}
For each case it suffices to show that, for any independent set $I$ disjoint from $A$, there is a basis $B$ containing $I$ such that $B\backslash I \subseteq A$: it implies that the restricted contractions have the correct rank.  \begin{compactenum}
\item[i.] If $B'$ is a basis that contains $I$, then every element of $B$ that is not contained in $A$ or $I$ allows to apply QE with $B'$ and $B_0$ to obtain $B$. 
\item[ii.] Apply QI with $B_0$ and $I$ to obtain $B$. 
\end{compactenum}
\end{proof}

%\begin{thm}\label{thm:important} Assume that $\Psi=(E, \Delta)$ satisfies QE. $<_int$ is a meet-semilattice that is extended by $\Gale(\Psi)$ Every linear extension is a shelling order of $\Delta$. Furthermore, $IA(B) \subseteq B_0$ for every basis $B$. 
%\end{thm}
%\begin{proof} A careful adaptation of the argument of \cite{MR1845495} gives the first result and the second result follows as in \cite{ArdCasSam} and the third one follows just as in \cite{Dall}.
%\end{proof}

A subset $A$ that satisfying the conditions of Corollary~\ref{coro:h-vector} is called \emph{admissible}. The following theorem gives a combinatorial interpretation of Corollary \ref{coro:h-vector}.
%%%
\begin{thm} Assume that $\Psi = (E, \Delta)$ is in QE  and $A$ is an admissible subset of $E$. 
If $B$ is a basis with $B\backslash B_0 = I$, then  
\begin{equation}\label{passive} IP(B) = I\cup IP(B\backslash I,\Psi_{A,I}). \end{equation} Finally, if $\Psi$ is also in QI$\cap$FBP, and if $A$ contains $B_0$, then equation (\ref{passive}) holds as well. 
\end{thm}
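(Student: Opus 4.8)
The plan is to establish the identity (\ref{passive}) by induction on $|I| = |B \setminus B_0|$, exploiting the fact that for a complex in QE the set $I$ is contained in $IP(B)$ (this is Theorem~\ref{thm:QE+FPP} in the QE$\cap$FBP case, and in the plain QE case one uses the quasi-exchange axiom with $B_0$ to conclude that the largest element of $B\setminus B_0$ is internally passive, then iterate). The base case $I = \emptyset$ reduces to showing $IP(B) = IP(B, \Psi_{A,\emptyset}) = IP(B, \Psi|_A)$; since $A$ is admissible and $B = B_0 \subseteq A$, the restriction sets agree because any internal exchange producing a smaller basis takes place among elements $\le \max B_0$, all of which lie in $A$ by admissibility.

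For the inductive step I would peel off the largest element $v$ of $I$. Since $v\notin B_0$ and $v = \max(B\setminus B_0)$, the quasi-exchange axiom shows $v\in IP(B)$; the remaining question is whether $IP(B)\setminus\{v\}$ matches $I\setminus\{v\}$ together with the passive set of $B\setminus I$ in the appropriate contracted-restricted complex. The key observation is that contracting $v$ commutes with the construction: $\Psi_{A,I} = (\Psi\slash v)_{A, I\setminus\{v\}}$ as ordered complexes (both are $(\Psi\slash I)|_A$). By Theorem~\ref{thm:QEconst}(iv) the contraction $\Psi\slash v$ is again in QE, and one needs that $A$ remains admissible for $\Psi\slash v$, and that $B_0(\Psi\slash v) = B_{v,0}$ still satisfies $B_{v,0}\subseteq B_0$ — this is exactly the FBP hypothesis (and in the QI case, Lemma~\ref{lemma:contraction} together with Theorem~\ref{thm:QE+QI+FBP}). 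With these in hand, $B\setminus\{v\}$ is a basis of $\Psi\slash v$ with $(B\setminus\{v\})\setminus B_{v,0} = I\setminus\{v\}$ (using $B_{v,0}\subseteq B_0$), and the inductive hypothesis gives $IP(B\setminus\{v\}, \Psi\slash v) = (I\setminus\{v\}) \cup IP(B\setminus I, \Psi_{A,I})$. Finally I would relate $IP(B,\Psi)$ to $IP(B\setminus\{v\},\Psi\slash v)$: for $b\in B\setminus\{v\}$, an internal exchange $B\setminus\{b\}\cup\{u\}$ with $u<b$ exists in $\Psi$ iff the corresponding exchange exists in $\Psi\slash v$ — one direction is immediate, and the other requires noting that any basis of $\Psi$ realizing internal passivity of $b$ can be assumed to contain $v$ (again via quasi-exchange with $B_0$, since $v$ is the top vertex and $v\notin B_0$), hence descends to $\Psi\slash v$. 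This yields $IP(B,\Psi)\setminus\{v\} = IP(B\setminus\{v\}, \Psi\slash v)$, and combining with the inductive identity closes the induction.

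The main obstacle I anticipate is the last step: carefully controlling internal activity under contraction of the top vertex $v$. The subtlety is that a witness basis $B'$ for $b\in IP(B)$ in $\Psi$ (i.e.\ $B' = B\setminus\{b\}\cup\{u\}$, $u<b$) need not a priori contain $v$, so it does not obviously correspond to a basis of $\Psi\slash v$; one must use that $v$ is the \emph{largest} non-coloop and apply the quasi-exchange axiom to $B'$ and $B$ (or $B_0$) to replace some element of $B'$ by $v$ without destroying the property $u\in B'$ and $b\notin B'$, which needs a short case analysis on whether $\max(B'\triangle B)$ lands in $B'$ or in $B$. The QI$\cap$FBP variant at the end should then follow by the same argument, invoking Theorem~\ref{thm:QE+QI+FBP} to keep $\Psi[\cal J]$-style truncations and contractions inside the class and Lemma~\ref{lemma:contraction} to guarantee $B_{I,0}\subseteq B_0$, which is precisely what makes the peeling-off of $v$ legitimate in that setting as well; I expect no new ideas are needed there beyond bookkeeping.
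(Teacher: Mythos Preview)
Your inductive plan contains a genuine gap. The step that breaks is the claim that $(B\setminus\{v\})\setminus B_{v,0} = I\setminus\{v\}$: the inclusion $B_{v,0}\subseteq B_0$ only yields $I\setminus\{v\}\subseteq (B\setminus\{v\})\setminus B_{v,0}$, not equality. Concretely, take $\Psi=U_{[4],3}$ (an ordered matroid, hence in every class under discussion), $B=\{1,3,4\}$, $v=4$: then $B_0=\{1,2,3\}$, $I=\{4\}$, $B_{v,0}=\{1,2\}$, and $(B\setminus\{v\})\setminus B_{v,0}=\{3\}\neq\emptyset=I\setminus\{v\}$. Thus the inductive hypothesis for $\Psi\slash v$ applies with the wrong index set, and since the cardinality of that new set need not decrease, the induction does not even terminate. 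Ironically, the step you flagged as the main obstacle is actually trivial: for $b\in B\setminus\{v\}$ with witness $B'=(B\setminus\{b\})\cup\{u\}$ one has $v\in B\setminus\{b\}\subseteq B'$ automatically (since $b\neq v$ and $u\notin B$), so $B'\setminus\{v\}$ witnesses $b\in IP(B\setminus\{v\},\Psi\slash v)$ with no case analysis whatsoever.

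The paper sidesteps all of this bookkeeping with a double-counting argument. It proves only the containment $I\cup IP(B\setminus I,\Psi_{A,I})\subseteq IP(B)$ directly (via $I\subseteq IP(B)$ together with the easy observation $IA(B,\Psi)\subseteq IA(B\setminus I,\Psi_{A,I})$), and then notes that the two expressions for $h(\Psi,x)$ furnished by Corollary~\ref{cor:QEh-poly} and by Corollary~\ref{coro:h-vector} are each sums indexed by the bases; since the exponents on one side are termwise at most those on the other while the totals coincide, every containment is forced to be an equality. This global enumerative trick is precisely what makes the passage to the restriction $\Psi_{A,I}=(\Psi\slash I)|_A$ painless---a point your induction would still have to address separately even if the indexing problem above were repaired.
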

\begin{proof} Notice first that in both cases, $I\subseteq IP(B)$: If $\Psi$ satisfies only QE then every element of $I$ can be exchanged using QE with $B$ and $B_0$. If $\Psi$ also satisfies QI and FBP this is simply Theorem~\ref{thm:QE+FPP}. 

Thus in both cases it follows that $IA(B, \Psi) = IA(B,\Psi)\backslash A \subseteq IA(B\backslash I, \Psi_{A,I})$: if $b\in IA(B, \Psi)$ then $B$ is the smallest lexicographic basis of $\Psi$ that contains $B\backslash\{b\}$, and so $B\backslash I$ is the smallest lexicographic basis of $\Psi\slash I$ that contains $\B\backslash(I\cup\{b\})$. 

Passing to the passive sets yields that $F\cup IP(B\backslash I, \Psi\slash I) \subseteq IP(B, \Psi)$. The equality follows by the double computation of the $h$-polynomial of $\Psi$ using Corollaries~\ref{cor:QEh-poly} and \ref{coro:h-vector}, i.e., \begin{equation}\label{eqn:restrictionSets} h(\Delta, x) = \sum_{B\in \B} x^{|IP(B)|} =  \sum_{I\in \Delta|_{E\backslash A}}x^{|I|} h(\Delta_{A,I},x). \end{equation}
Both sums can be regarded as a sum over the bases and the inclusion of passive sets above forces all the equalities. 
 \end{proof}

Similar results were used in \cite{MR3339029} to provide a combinatorial conjecture that implies Stanley's conjecture for matroids. The next goal is to extend the combinatorial conjecture to the class of complexes satisfying QE, QI and FBP. It becomes feasible to do induction on the number of bases by taking order ideals of the Gale posets. Such order ideals preserve QE, QI and FBP, but general order ideals of matroids are not matroids. We expect that this technique will have many applications and hope it could eventually lead to a full resolution of Stanley's conjecture.  

The main idea for the stronger conjecture is to use the $h$-vector decomposition and the relations between $\Int(\Psi)$ and $\Gale(\Psi)$. Recall, from Lemma~\ref{lemma:Grading}, that $\Int(\Psi)$ is graded by the size of the internally passive sets, hence its rank generating function coincides with the $h$-polynomial of $\Psi$. One way to attack Stanley's conjecture is to construct one monomial of degree $|IP(B)|$ for every basis of an ordered matroid to obtain a suitable multicomplex from a combinatorial rule. A candidate for the poset of such a multicomplex under divisibility could be $\Int(\Psi)$. This, however, does not work in general: $\Int(\Psi)$ fails to have enough relations to be the face poset of a multicomplex. However, one might hope that it is possible to add some extra relations to $\Int(\Psi)$ to obtain the face poset of a multicomplex with the same rank generating function. This is the point where the inductive step should come: by Theorem~\ref{thm:intVs.Gale} the Int poset of any Gale truncation of $\Psi$ is an order ideal of $\Int(\Psi)$. 

The following conjecture is a strengthening of Stanley's conjecture about $h$-vectors of matroids. It aims to construct the multicomplex using the very rich combinatorial structure of complexes satisfying QE, QI and FBP. 
\begin{conj}\label{conj:main} There exists a map $\cal F$ from the class of ordered simplicial complexes in QE$\cap$QI$\cap$FBP to the class of multicomplexes, such that for each $\Psi$ the multicomplex ${\cal F}(\Psi)$ satisfies the following: 
\begin{compactenum}
\item[i.] The set of variables appearing in ${\cal F}(\Psi)$ is $\{x_i : i \text{ is a vertex of }\Delta \text{ not contained in } B_0\}$. 
\item[ii.] The monomials of ${\cal F}(\Psi)$ are in bijection with bases of $\Psi$, in such a way that: 
\begin{compactenum}
\item the monomial associated to the basis $B$ under this bijection is denoted by $m_B$,
\item the degree of $m_B$ is equal to $|IP(B)|$,
\item the support of $m_B$, i.e., the set $\{e\in E \, :  \, x_e|m_B\}$, is equal to $B\backslash B_0$.
\end{compactenum}
\item[iii.] The poset ${\cal M}(\Psi) = (\B, <_m)$,  where we write $B<_m B'$ if and only if $m_B|m_{B'}$, is an extension of $\Int(\Psi)$ and is extended by $\Gale(\Psi)$. 
\item[iv.] If $A\subseteq E$ contains $B_0$, then ${\cal F}(\Psi|_A) \subseteq {\cal F}(\Psi)$. 

\end{compactenum}
\end{conj}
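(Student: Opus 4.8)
Since the final statement is a conjecture, the plan is to establish it in the cases within reach --- above all the class of pure shifted complexes, which already recovers Stanley's conjecture for all Schubert matroids --- and to lay out the induction intended for the general case. The backbone is the description in Section~\ref{sct:prelim} of a pure shifted complex of rank $d$ on $[n]$ as a Gale truncation $U_{[n],d}[\J]$ of the ordered uniform matroid, together with the fact that every clause of Conjecture~\ref{conj:main} is compatible with passing to a Gale order ideal: the class QE$\cap$QI$\cap$FBP is closed under Gale truncation (Theorems~\ref{thm:QEGale} and \ref{thm:QE+QI+FBP}), internally passive sets are unchanged (Theorem~\ref{thm:QEGale}), and hence $\Int$ and $\Gale$ of a truncation are the induced subposets. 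So it is enough to construct ${\cal F}$ on the single complex $U_{[n],d}$, verifying (i)--(iv) along with the strengthening ``$m_{B'}\mid m_B\Rightarrow B'\leq_{\Gale}B$'' demanded by (iii), and then to set ${\cal F}\big(U_{[n],d}[\J]\big):=\{m_B:B\in\J\}$.

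For $U_{[n],d}$ I would read the monomial off the staircase of a basis. Write $B=\{b_1<\dots<b_d\}$, let $k=|IA(B)|$ be the length of the initial run $\{1,\dots,k\}=IA(B)$, so $IP(B)=\{b_{k+1},\dots,b_d\}$ has size $m:=d-k$, and split $IP(B)=M\sqcup T$ with $M=IP(B)\cap[d]$ and $T=B\setminus B_0$. Since $k+1\notin B$, the set $M$ lies in $[k+2,d]$ (which has $m-1$ elements), so its complement there has $|T|-1$ elements; translating that complement down by $k+1$ gives a subset of $\{1,\dots,m-1\}$ of size $|T|-1$, which I read as the bar positions of a composition $(e_1,\dots,e_{|T|})$ of $m$ into $|T|$ positive parts, and I set $m_B:=x_{t_1}^{e_1}\cdots x_{t_{|T|}}^{e_{|T|}}$ for $T=\{t_1<\dots<t_{|T|}\}$ and ${\cal F}(U_{[n],d}):=\{m_B:B\ \text{a}\ d\text{-subset}\}$. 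By construction $\deg m_B=|IP(B)|$ and the support of $m_B$ is $B\setminus B_0$, giving (i); and $B\mapsto m_B$ is a bijection onto its image, because for fixed $k$ and $T$ both the bases with that data and the eligible monomials number $\binom{m-1}{|T|-1}$, giving (ii). One then checks that $\{m_B\}$ is closed under divisibility: every child of $m_B$ --- lower some $e_i\ge 2$, or delete some $t_i$ with $e_i=1$ --- unwinds to an explicit $d$-subset $B'$ whose monomial is that child, and one verifies $B'\leq_{\Gale}B$, whence transitivity yields the ``extended by $\Gale$'' half of (iii). One also confirms that ${\cal F}(U_{[n],d})$ is pure, so that its $O$-sequence --- which is the $h$-vector by Corollary~\ref{cor:QEh-poly} --- is a pure $O$-sequence.

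Granting the uniform case, the rest is bookkeeping. For shifted $\Psi=U_{[n],d}[\J]$, the set ${\cal F}(\Psi)=\{m_B:B\in\J\}$ is a multicomplex because $m_{B'}\mid m_B$ and $B\in\J$ force $B'\leq_{\Gale}B$, hence $B'\in\J$; (i) and (ii) are inherited; (iii) holds since deleting bases only removes relations while $\Int(\Psi)$ and $\Gale(\Psi)$ are induced subposets (Theorem~\ref{thm:QEGale}); and for (iv), if $A\supseteq B_0$ then $\Psi|_A$ is again shifted and, since $[d]=B_0\subseteq A$, the data $B_0$, $IA(B)$, $IP(B)$ of a basis $B\subseteq A$ are the same computed in $\Psi$ or in $\Psi|_A$ (every small witness for internal passivity already lies in $[d]$), so $m_B$ is unchanged and ${\cal F}(\Psi|_A)\subseteq{\cal F}(\Psi)$. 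For an arbitrary ordered matroid the same mechanics support an induction on the number of bases --- build ${\cal F}$ on all proper Gale order ideals of $\Gale(M)$, then adjoin one monomial for the top basis so that (ii)--(iv) persist --- which is where one would specialize to rank $\le 4$ via Theorem~\ref{thm:Dall} (it forces $IA(B)\subseteq B_0$), or to Dall's internally perfect matroids.

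The real obstacle is clause (iii) for the uniform matroid: showing that the chosen composition rule sends $\subseteq$ on internally passive sets to $\mid$ on monomials and $\mid$ on monomials to $\leq_{\Gale}$ on bases, and verifying that this rule --- rather than one of several equally plausible alternatives --- satisfies both monotonicities. Since $IP(B)\subseteq IP(B')$ does not force $B\subseteq B'$, the first monotonicity has to be teased out slot by slot along $T\subseteq T'$ from the way the complement of $M$ encodes the exponents; after that the remaining verifications are finite and mechanical.
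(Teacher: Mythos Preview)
Your global strategy --- build $\mathcal{F}$ on $U_{[n],d}$, verify that divisibility of monomials implies Gale comparability, and then restrict to Gale order ideals --- is exactly the framework underlying the paper's Theorem~\ref{thm:shifted}. However, your monomial assignment is \emph{not} the one the paper uses. The paper defines $m_{\lambda(B)}$ recursively through the Durfee square: if the Durfee square has side $k$ and $i_1<\dots<i_k$ are the vertices of $B\setminus B_0$, one removes the top $k$ rows, recursively computes a monomial $x_{d-k+1}^{\alpha_1}\cdots x_d^{\alpha_k}$ for the residual partition in a $(d-k)\times k$ box, and sets $m_\lambda=x_{i_1}^{\alpha_1+1}\cdots x_{i_k}^{\alpha_k+1}$ (equivalently, the ``bouncing light'' picture). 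Your complement-to-composition encoding is a genuinely different bijection: for $d=5$ and $B=\{3,4,6,7,8\}$ your rule gives $x_6x_7^3x_8$, whereas the Durfee recursion gives $x_6^2x_7^2x_8$.

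The substantive gap is clause~(iii), which you rightly flag as the crux but leave unverified --- and in fact your rule \emph{fails} the $\Int\Rightarrow\mathcal{M}$ direction. Take $d=5$, $B=\{3,4,6,7,8\}$ and $B'=\{1,3,4,6,8\}$. Then $IP(B')=\{3,4,6,8\}\subseteq\{3,4,6,7,8\}=IP(B)$, so $B'<_{\mathrm{int}}B$. Your construction gives
\[
m_{B'}=x_6^3x_8\qquad(k'=1,\ M'=\{3,4\}\subseteq[3,5],\ \text{complement }\{5\},\ \text{bar at }3,\ \text{composition }(3,1)),
\]
\[
m_B=x_6x_7^3x_8\qquad(k=0,\ M=\{3,4\}\subseteq[2,5],\ \text{complement }\{2,5\},\ \text{bars at }1,4,\ \text{composition }(1,3,1)),
\]
and $x_6^3\nmid x_6$, so $m_{B'}\nmid m_B$. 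The failure mode is structural: when an element of $T$ is deleted (here $7$), the degree it carried must be absorbed by a surviving variable, and your rule dumps it entirely onto the immediately preceding variable, which may have been small. The paper's Durfee recursion is built precisely to control this redistribution layer by layer; any replacement rule must incorporate a comparable mechanism before the ``slot by slot'' verification you propose can succeed.
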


Roughly speaking, the conjecture predicts that there is a natural way to extend the poset $\Int(\Psi)$ by adding some relations of $\Gale(\Psi)$. We now show that this conjecture in fact implies Stanley's conjecture. 

\begin{thm}\label{thm:ImpStan} Conjecture~\ref{conj:main} implies Stanley's conjecture.
\end{thm}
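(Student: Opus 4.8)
The plan is to show that for any ordered matroid $M = (E, \Delta)$, the multicomplex ${\cal F}(\Psi)$ produced by Conjecture~\ref{conj:main} (applied to $\Psi = (E, \Delta)$, which lies in QE$\cap$QI$\cap$FBP by Theorems~\ref{thm:FBPMatroids} and the fact that matroids satisfy QE and QI) is a pure multicomplex realizing the $h$-vector of $\Delta$. First I would recall that by Corollary~\ref{cor:QEh-poly} we have $h(\Psi, x) = \sum_{B\in \B} x^{|IP(B)|}$. Condition (ii) of the conjecture gives a bijection $B \mapsto m_B$ between bases and monomials of ${\cal F}(\Psi)$ with $\deg m_B = |IP(B)|$; hence the degree-generating function of ${\cal F}(\Psi)$ equals $\sum_B x^{|IP(B)|} = h(\Psi, x)$, so ${\cal F}(\Psi)$ is a multicomplex realizing the $h$-vector. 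This already shows the $h$-vector is an $O$-sequence, but the content of Stanley's conjecture is \emph{purity}, so the real work is establishing that every monomial of ${\cal F}(\Psi)$ divides a monomial of degree $r$ (equivalently, $h$ has no "gaps" at the top from the multicomplex's perspective — more precisely, that maximal monomials all have degree $d = r$).

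The key step is the purity argument, and here is where I would use condition (iii): ${\cal M}(\Psi) = (\B, <_m)$ is an extension of $\Int(\Psi)$ and is extended by $\Gale(\Psi)$. Since $M$ is a matroid, $\Gale(\Psi)$ has a \emph{maximum} basis $B_{\max}$ (this is the classical Gale/matroid duality fact recalled in the Preliminaries: $\Gale(\Psi)$ has a minimum and a maximum for matroids). Because $\Gale(\Psi)$ extends ${\cal M}(\Psi)$ and ${\cal M}(\Psi)$ is defined by divisibility of the $m_B$, the monomial $m_{B_{\max}}$ is divisible by \emph{every} $m_B$: indeed $B <_{\Gale} B_{\max}$ for all $B$, and one needs $<_m$ to be compatible so that $m_B \mid m_{B_{\max}}$. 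Wait — extension goes the wrong way for that conclusion directly; $\Gale$ extending ${\cal M}$ means $B <_m B'$ implies $B <_{\Gale} B'$, not the converse. So instead I would argue: take any monomial $m_B$; I want to produce a degree-$r$ monomial in ${\cal F}(\Psi)$ divisible by it. Consider $B_{I,0}$-type completions — by the QI axiom (or FBP, via Theorem~\ref{thm:QE+FPP}), starting from $B$ one can build a chain in $\Int(\Psi)$ (which by Lemma~\ref{lemma:Grading} is graded with rank $= |IP(\cdot)|$) going upward: Lemma~\ref{lemma:Grading} says every non-$B_0$ basis $B'$ has some $B''$ with $IP(B'') = IP(B')\setminus\{b\}$, i.e., $\Int(\Psi)$ has a covering structure \emph{downward}; one needs the dual — that every basis with $|IP(B)| < r$ sits below some basis of larger internally-passive set in $\Int(\Psi)$. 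For a matroid this follows because the $h$-vector of a matroid is nonzero in all degrees $0,\dots,r$ up to the point it becomes zero, and more structurally one can use the shelling: Bj\"orner's lexicographic shelling of a matroid has the property that restriction sets of maximal size $r$ exist and every restriction set extends. Since ${\cal M}(\Psi)$ refines $\Int(\Psi)$, a chain in $\Int(\Psi)$ from $B$ up to a basis $B^*$ with $|IP(B^*)| = r$ gives $m_B \mid m_{B^*}$ with $\deg m_{B^*} = r$, proving purity.

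More carefully, the cleanest route: by Theorem~\ref{thm:intVs.Gale}, $\Gale(\Psi)$ extends $\Int(\Psi)$, and for matroids $\Gale(\Psi)$ has a maximum $B_{\max}$; for a matroid it is a standard fact (or follows from Theorem~\ref{thm:Dall} applied to the dual) that $IP(B_{\max})$ is all of $B_{\max}\setminus B_0$ and has size $r$ — i.e. $h_r \neq 0$ and $B_{\max}$ is a top-degree element. Then for an arbitrary basis $B$, I would use Lemma~\ref{lemma:Grading} repeatedly in reverse: I claim $\Int(\Psi)$, being graded, has the property that from any $B$ one can ascend to some basis of rank $r$. This uses that matroid $h$-vectors are "connected" in the appropriate poset sense — which one can extract from Chari's decomposition \cite{MR1422892} or simply from the observation that in the lexicographic shelling of a matroid, any face lies in a facet whose restriction set has size $r$ (the facet being, say, the lex-last basis containing it). Translating: $B \leq_{\Int} B^*$ for some $B^*$ with $|IP(B^*)| = r$. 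By condition (iii), ${\cal M}(\Psi)$ extends $\Int(\Psi)$, so $m_B \mid m_{B^*}$, and $\deg m_{B^*} = r$. Hence every monomial of ${\cal F}(\Psi)$ divides a monomial of degree $r$, so ${\cal F}(\Psi)$ is a pure multicomplex realizing $h(\Delta, x)$, i.e., $h(\Delta, x)$ is a pure $O$-sequence.

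The main obstacle I anticipate is precisely the ascending-chain claim in $\Int(\Psi)$: Lemma~\ref{lemma:Grading} only guarantees \emph{descending} covers (given $B \neq B_0$ there is $B'$ with $IP(B') = IP(B)\setminus\{b\}$), so proving that every basis lies below a top-rank element of $\Int(\Psi)$ requires a genuinely separate matroid-specific input. The natural fix is to invoke, for matroids only, that the lexicographic shelling has restriction sets reaching size $r$ and that the internal order is compatible with "completing" a basis to one with maximal internally-passive set — this is essentially content of \cite[Ch. 7]{MR1165544} or can be deduced from Theorem~\ref{thm:Dall}; alternatively one observes that $B_{\max}$ (the Gale maximum, which exists for matroids) satisfies $B \leq_{\Int} B_{\max}$ because $\Gale(\Psi)$ is an extension of $\Int(\Psi)$ \emph{and} for matroids these two posets interact tightly enough, or one simply notes that in a matroid every internally passive set is contained in the internally passive set of some basis of full internal corank, which follows from the exchange structure. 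I would state this as a short lemma about matroids and prove it via the lex-shelling before assembling the final argument.
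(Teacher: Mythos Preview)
Your approach is essentially the paper's: both arguments reduce purity of ${\cal F}(\Psi)$ to condition (iii) (that ${\cal M}(\Psi)$ extends $\Int(\Psi)$) together with the matroid-specific fact that all maximal elements of $\Int(\Psi)$ have the same rank. The paper's proof is three lines and simply asserts this last fact; you correctly isolate it as the nontrivial input and propose proving it as a separate lemma. Your detour through the Gale maximum $B_{\max}$ is a dead end (as you yourself note, the extension goes the wrong way), but the eventual argument---climb in $\Int(\Psi)$ from $B$ to a top-rank $B^*$, then use $B \le_{\Int} B^* \Rightarrow m_B \mid m_{B^*}$---is exactly right and matches the paper.

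Two small cleanups. First, the top rank of $\Int(\Psi)$ is $r$ minus the number of coloops, not $r$ in general; you should phrase the target as ``a basis $B^*$ of maximal rank in $\Int(\Psi)$'' rather than ``$|IP(B^*)| = r$'' (you acknowledge this in passing, but the write-up should be consistent). Second, the ascending-chain claim---equivalently, that $\Int(\Psi)$ is pure for matroids---does require a short argument or citation (Las Vergnas's work on active orders, or an exchange argument showing that any internally active non-coloop can be traded to enlarge $IP$); the paper glosses over this, so supplying it would actually improve on the paper's exposition.
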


\begin{proof} Assume conjecture~\ref{conj:main} holds and let $\Psi$ be an ordered matroid with multicomplex ${\cal F}(\Psi)$. We have to show that ${\cal F}(\Psi)$ is pure. ${\cal M}(\Psi)$ is an extension of $\Int(\Psi)$ that preserves the grading and all the maximal elements of $\Int(\Psi)$ have rank $k\le r$ in $\Int(\Psi)$. It follows that all maximal elements of ${\cal M}(\Psi)$ have degree $k$ which is the requirement for purity.
\end{proof}

One particular feature of this conjecture is that it can be reduced to a statement about finitely many objects in a fixed rank. 
\begin{thm}\label{thm:reduction} Assume that the map ${\cal F}$ of Conjecture \ref{conj:main} exists for the class of all  rank $r$ ordered complexes QI$\cap$QE$\cap$FBP with $|E|\le 2r-1$. Then Conjecture \ref{conj:main} holds for rank $d$ complexes. 
\end{thm}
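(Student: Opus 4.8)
I will construct ${\cal F}(\Psi)$ for an arbitrary rank-$d$ complex $\Psi = (E,\Delta)$ in QE$\cap$QI$\cap$FBP by \emph{localizing each basis to a restriction on few vertices}. For a basis $B$ set $A_B := B_0 \cup B$. Since $B_0 \subseteq A_B$ we have $|B_0 \cap A_B| = d = \rk(A_B)$ (the latter because $B \subseteq A_B$ is independent of size $d$), so $\Psi|_{A_B}$ is pure of rank $d$ by the purity criterion for restrictions in QI proved in Section~\ref{sect:indep}. I will first check that $\Psi|_{A_B}$ actually lies in QE$\cap$QI$\cap$FBP: purity of full rank gives QE and FBP by Theorems~\ref{thm:QEconst} and \ref{thm:FBPconst}, and QI follows from a short argument using FBP and Lemma~\ref{lemma:contraction} — for any independent $I \subseteq A_B$ one has $B_{I,0}^{\Psi} \subseteq B_0 \subseteq A_B$ (contract $I\cap B_0$ first, whose first lexicographic basis is $B_0 \setminus (I\cap B_0)$, then apply Lemma~\ref{lemma:contraction} to the remaining part), hence $B_{I,0}^{\Psi|_{A_B}} = B_{I,0}^{\Psi}$ and the QI axiom of $\Psi$ transfers to $\Psi|_{A_B}$; note $B_0$ remains the first basis of $\Psi|_{A_B}$. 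Then: if $B \cap B_0 \neq \emptyset$ then $|A_B| \le 2d-1$, so the hypothesis supplies ${\cal F}(\Psi|_{A_B})$ and I set $m_B$ to be the monomial it attaches to $B$; if $B\cap B_0 = \emptyset$, then $B = B\setminus B_0 \subseteq IP(B) \subseteq B$ by Theorem~\ref{thm:QE+FPP}, so $IP(B) = B$ and I set $m_B := \prod_{e\in B} x_e$. Finally ${\cal F}(\Psi) := \{ m_B : B \in \B\}$.

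The technical engine behind every verification is the identity $IP(B,\Psi) = IP(B,\Psi|_A)$ whenever $A \supseteq B_0$ and $B \subseteq A$: applying equation~(\ref{passive}) (in its QE$\cap$QI$\cap$FBP form, with $I = B\setminus B_0$) to both $\Psi$ and $\Psi|_A$, the two right-hand sides involve the \emph{same} contraction-then-restriction $(\Psi\backslash I)|_A$, so they agree. Consequently $\deg m_B = |IP(B,\Psi|_{A_B})| = |IP(B,\Psi)|$ and $\operatorname{supp}(m_B) = B \setminus B_0(\Psi|_{A_B}) = B\setminus B_0$, which are exactly the degree and support demanded in condition (ii) of Conjecture~\ref{conj:main}; the case $B\cap B_0 = \emptyset$ is consistent because there $IP(B)=B$. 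The same identity plus property (iv) of ${\cal F}$ on the small complexes shows $m_B$ is independent of the admissible set used: for $B_0 \cup B \subseteq A' \subseteq A''$ both of size $\le 2d-1$, property (iv) inside $\Psi|_{A''}$ matches the monomial of $B$ read in ${\cal F}(\Psi|_{A'})$ with that read in ${\cal F}(\Psi|_{A''})$, and when $B\cap B_0 = \emptyset$ the formula $\prod_{e\in B} x_e$ is the unique squarefree monomial of support $B$ and degree $|IP(B)|=|B|$, hence forced. This gives well-definedness, and running it backwards gives condition (iv) for ${\cal F}(\Psi)$ itself, together with condition (i): each variable $x_e$ with $e$ a vertex outside $B_0$ divides $m_B$ for any basis $B \ni e$, since then $e \in B\setminus B_0 = \operatorname{supp}(m_B)$.

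It remains to see ${\cal F}(\Psi)$ is a multicomplex, that $B\mapsto m_B$ is a bijection, and condition (iii). Closure under divisibility: any divisor of $m_B$ with $B\cap B_0\neq\emptyset$ already occurs in the genuine multicomplex ${\cal F}(\Psi|_{A_B})$ (and it is matched, via (iv), with the monomial of some basis of $\Psi|_{A_B}$, hence of $\Psi$); and a divisor $\prod_{e\in S}x_e$ of $\prod_{e\in B}x_e$ is realized as $m_{S\cup B_{S,0}}$, because $S\cup B_{S,0}$ is a basis with $(S\cup B_{S,0})\setminus B_0 = S$ (by Lemma~\ref{lemma:contraction}) whose internally passive set is $S$ — the only extra contribution in~(\ref{passive}) would be $IP(B_{S,0})$ in a contraction-restriction where $B_{S,0}$ is the first basis, hence empty. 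Distinctness of the $m_B$, and therefore the bijection (using $\sum_B x^{|IP(B)|} = h(\Psi,x)$ from Corollary~\ref{cor:QEh-poly}), follows from injectivity of the bijections attached to the small ${\cal F}(\Psi|_{A_B})$ together with the observation that two bases with the same monomial have $B\triangle B' \subseteq B_0$, hence $A_B = A_{B'}$. For condition (iii) I localize each relation between $B$ and $B'$ to $\Psi|_{B_0\cup B'}$ (which contains $B_0\cup B$, since $m_B \mid m_{B'}$ or $B<_{\Int}B'$ forces $\operatorname{supp}(m_B)\subseteq\operatorname{supp}(m_{B'})$), where condition (iii) for the smaller ${\cal F}$ applies and where $\Gale$ and $\Int$ restrict correctly. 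The only case not covered is $B'\cap B_0 = \emptyset$: then $IP(B)\subseteq B\cap B'$ is disjoint from $B_0$, so $IP(B) = B\setminus B_0 = \operatorname{supp}(m_B)$, and $m_B\mid m_{B'}$ (resp.\ $B<_{\Int}B'$) gives $IP(B)\subseteq B'$, whence $B<_{\Gale}B'$ by Theorem~\ref{thm:intVs.Gale}; the $\Int$-to-$m$ direction is handled the same way.

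The main obstacle is precisely the corner case in which $B$ is disjoint from $B_0$: here no restriction on $\le 2d-1$ vertices contains both $B_0$ and $B$, so the hypothesis gives no information and the monomial (and the order relations involving it) must be determined directly. The fact that makes this tractable is Theorem~\ref{thm:QE+FPP}, which forces $IP(B) = B$ and thereby removes all freedom in $m_B$. A secondary — purely bookkeeping — difficulty is keeping the monomial assignments coherent across the many overlapping restrictions $\Psi|_{A_B}$; this is where condition (iv) of Conjecture~\ref{conj:main} for the smaller complexes does the essential gluing.
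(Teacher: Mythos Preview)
Your argument is correct and follows essentially the same strategy as the paper: localize each basis $B$ to the restriction $\Psi|_{B_0\cup B}$, handle bases disjoint from $B_0$ by the forced squarefree monomial, and glue via condition~(iv). The paper packages the verification of condition~(iii) slightly differently, introducing a \emph{weak Gale order} $B_1<_{wG}B_2$ iff $B_1<_{\Gale}B_2$ inside some $\Psi|_{B_0\cup I}$, and then observing that $wGale(\Psi)$ sits between $\Int(\Psi)$ and $\Gale(\Psi)$; your localization of each relation to $\Psi|_{B_0\cup B'}$ is the same idea unpacked. Your treatment is in fact more careful than the paper's on two points: you explicitly verify that each $\Psi|_{A_B}$ lies in QE$\cap$QI$\cap$FBP (the paper tacitly assumes this), and you spell out the corner case $B'\cap B_0=\emptyset$ in condition~(iii). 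One minor slip: when you invoke equation~(\ref{passive}) to prove $IP(B,\Psi)=IP(B,\Psi|_A)$, the common contraction-then-restriction should be $(\Psi\backslash I)|_{B_0}$, not $(\Psi\backslash I)|_A$; the decomposition set in~(\ref{passive}) is taken to be $B_0$ in both applications, and the point is that $((\Psi|_A)\backslash I)|_{B_0}=(\Psi\backslash I)|_{B_0}$ since $B_0\subseteq A$ and $I\cap B_0=\emptyset$.
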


\begin{proof} Assume that Conjecture~\ref{conj:main} holds for all complexes $\Psi$ with $|E(\Psi)|\le 2r-1$ and let $\Psi$ be any ordered complex of rank $r$ satisfying QE$\cap$QI$\cap$FBP. For each basis $B$ disjoint from $B_0$ let $\mathbf{x}_B:= \prod_{b\in B}x_b$ and consider the set of monomials \begin{equation}{\cal F}(\Psi) : =\{x_B | B\in \B,\, B\cap B_0 = \emptyset\} \cup \left(\bigcup_{I\in \Delta|_{E-B_0}, \,|I|\le d-1} {\cal F}(\Psi|_{B_0\cup I})\right).\end{equation}

The multicomplexes in the union are defined since they correspond to complexes whose groundset has at most $2d-1$ elements, so they exist by assumption. Notice that there is one monomial for each basis $B$ of $B\in \B(\Psi)$. If $B_0 \cap B \not= \emptyset$, then $B$ can be found in any restriction $\Psi|_{B_0\cup I}$ for any $I$ containing $B\backslash B_0$ and $m_B$ comes from ${\cal F}(\Psi|_{B_0\cup I})$. The choice of $I$ is irrelevant: condition \emph{iv.} of the conjecture is satisfied, hence $m_B$ is the same monomial for all such complexes.

Condition $i.$ is straightforward to verify. Restrictions preserve internally passive sets, and so conditions $ii.$ and $iv.$ follow. To prove condition $iii.$ define the \emph{weak Gale order} $<_{wG}$ of $\Psi$ to be defined by $B_1<_{wG} B_2$ if there is an independent $I$ such that $B_1 < B_2$ in the Gale order of $\Psi|_{B_0\cup I}$. To check property $iii.$ it is enough to see that $wGale(\Psi) := (\B, <_{wG})$ is  between $\Int(\Psi)$ and  $\Gale(\Psi)$. 
\end{proof}

It can be checked that Conjecture~\ref{conj:main} refines Conjecture~3.11 in \cite{MR3339029}. The main difference is that the one in \cite{MR3339029} is formulated for very special kinds of ordered matroids. The restrictions on the orders can be replaced with the FBP. Consequently, the results of \cite{MR3339029} show that Conjecture~\ref{conj:main} holds for matroids of rank at most 4 by providing an algorithm that constructs ${\cal F}(\Psi)$. The proof is computer aided. Even better, a careful and straightforward analysis of the algorithm shows that the conjecture holds also for Gale truncations of matroids of rank at most 4. 

Internally perfect matroids defined by \cite{Dall} satisfy the conjecture and are a potential candidate for a big class of matroids satisfying Conjecture \ref{conj:main}. In particular, $\Int(\Psi)$ is the poset of divisibility of a pure multicomplex for internally perfect matroids, i.e., for internally perfect matroids the equality ${\cal M}(\Psi) = \Int(\Psi)$ holds. 

Considerations about Gale trunctations open the doors to do induction on the number of bases of the complex: one can construct the multicomplex for order ideals of Gale and there is at most one monomial that is not achieved by this technique. A good setting to guess how to do this type of construction is the class of pure shifted complexes; recall that these complexes belong to QI$\cap$QI$\cap$FBP and are closed under Gale truncations.

\begin{thm}\label{thm:shifted} Conjecture~\ref{conj:main} holds for shifted complexes.
\end{thm}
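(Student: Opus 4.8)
The plan is to use the structural description of pure shifted complexes recalled in the excerpt: such a complex of rank $d$ is exactly a Gale truncation of the ordered uniform matroid $U_{E,d}$, equivalently its bases are encoded by the partitions in a $d\times(|E|-d)$ box that form an order ideal $\J$ of Young's lattice, where $B=\{b_1<\dots<b_d\}$ corresponds to $\lambda(B)=(b_d-d,\dots,b_1-1)$, $B_0=[d]$, the set $B\setminus B_0$ is the set of the $t$ largest elements of $B$ ($t$ the Durfee size of $\lambda(B)$), and $IP(B)$ is the set of the $\ell(\lambda(B))$ largest elements of $B$. The first reduction I would make is that it suffices to build ${\cal F}$ on the uniform matroids themselves. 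Indeed, for a Gale truncation $\Psi=U_{E,d}[\J]$ and a basis $B\in\J$, the bases witnessing internal passivity of $B$ — the sets $(B\setminus\{b\})\cup\{b'\}$ with $b'<b$, $b'\notin B$ — are all strictly below $B$ in $\Gale(U_{E,d})$ and hence lie in $\J$, so $IP(B,\Psi)=IP(B,U_{E,d})$; likewise $\Gale(\Psi)$ and $\Int(\Psi)$ are the restrictions of $\Gale(U_{E,d})$ and $\Int(U_{E,d})$ to $\J$. Thus, once ${\cal F}(U_{E,d})$ is known, one sets ${\cal F}(\Psi):=\{m_B:B\in\J\}$; this is a multicomplex because $\J$ is a down-set of $\Gale(U_{E,d})$ and (by part~iii for $U_{E,d}$) the divisibility poset of ${\cal F}(U_{E,d})$ is refined by $\Gale(U_{E,d})$, so $\J$ is closed under division, and conditions i--iv for $\Psi$ follow by restriction from those for $U_{E,d}$, together with the compatibility $m_B^{\,U_{A,d}}=m_B^{\,U_{E,d}}$ for $B_0\subseteq A\subseteq E$.

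For $U_{E,d}$ I would define ${\cal F}(U_{E,d})$ by induction on $|E|$ through removal of the largest vertex $n$: the deletion (restriction to $E\setminus\{n\}$) and the contraction (link of $n$) are again ordered uniform matroids $U_{E\setminus n,d}$ and $U_{E\setminus n,d-1}$, the bases of $U_{E,d}$ split into those avoiding $n$ (the bases of the deletion) and those of the form $B'\cup\{n\}$ with $B'$ a basis of the contraction, and the internally passive sets behave predictably under both operations ($IP$ is unchanged on bases avoiding $n$, and $IP(B'\cup\{n\})=IP(B')\cup\{n\}$). One puts $m_B:=m_B^{\,U_{E\setminus n,d}}$ in the first case and $m_B:=x_n\cdot\sigma\!\big(m_{B\setminus\{n\}}^{\,U_{E\setminus n,d-1}}\big)$ in the second, $\sigma$ being the index shift that sends the extra variable $x_d$ of the contraction to $x_n$; this produces the explicit monomial one reads off from $\lambda(B)$ — one variable per row of $\lambda(B)$, the variable attached to a given row being the element of $B\setminus B_0$ selected by where that row meets the Durfee square. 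Conditions i and ii — that ${\cal F}(U_{E,d})$ uses precisely the variables indexed by $E\setminus B_0$, that $B\mapsto m_B$ is injective, that $\deg m_B=|IP(B)|$, and that $\operatorname{supp}(m_B)=B\setminus B_0$ — are then straightforward inductions on $|E|$; condition iv is the matching of the recursions for $U_{A,d}$ and $U_{E,d}$, split according to whether $\max A=n$.

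The hard part is condition iii, namely that $\Int(U_{E,d})\subseteq{\cal M}(U_{E,d})\subseteq\Gale(U_{E,d})$. The inclusion ${\cal M}\subseteq\Gale$ should come from the fact that $m_B$ (with its degree) recovers $\lambda(B)$ — its support determines the Durfee rows and its exponents the remaining rows — so $m_B\mid m_{B'}$ translates into $\lambda(B)\subseteq\lambda(B')$. The inclusion $\Int\subseteq{\cal M}$, i.e.\ $IP(B)\subseteq IP(B')\Rightarrow m_B\mid m_{B'}$, is the delicate one: $IP(B)\subseteq IP(B')$ already forces the support inclusion $B\setminus B_0\subseteq B'\setminus B_0$, and one must then check that the multiplicities with which the internally passive elements of $B$ lying in $B_0$ are routed onto the variables of $B\setminus B_0$ are dominated by those for $B'$. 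I would prove this by induction along the same deletion/contraction recursion, the genuinely awkward case being $n\notin B$, $n\in B'$, where $m_B$ comes from the deletion and $m_{B'}$ from the contraction of a smaller rank, so one compares monomials produced by complexes of different ranks; here one uses $IP(B,U_{E,d})\subseteq IP(B'\setminus\{n\},U_{E\setminus n,d-1})$ and an inductive statement designed for this mixed comparison. Once iii is established, Theorem~\ref{thm:ImpStan} shows the resulting multicomplex is pure when $\Psi$ is a matroid, so the construction in particular re-proves Stanley's conjecture for Schubert matroids. I expect essentially all the effort to lie in the $\Int\subseteq{\cal M}$ step for $U_{E,d}$ and in pinning down the correct inductive invariant in the mixed-rank case.
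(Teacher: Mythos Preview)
Your deletion/contraction recursion and the paper's Durfee-square recursion produce the same monomials $m_B$ (the shift $\sigma\colon x_d\mapsto x_n$ is exactly the relabeling implicit in passing from $\lambda$ to $\mathrm{Dur}(\lambda)$), so the two constructions coincide. Your reduction to $U_{E,d}$ via Gale truncation is a clean way to package condition~iv; the paper instead works directly with an arbitrary order ideal of Young's lattice and checks closure under divisors inside that ideal.

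The genuine gap is in condition~iii, exactly at the step you flag as delicate: the inclusion $\Int(\Psi)\subseteq{\cal M}(\Psi)$ is \emph{false} for this assignment $B\mapsto m_B$, so no induction can establish it. In $U_{[5],3}$ take $B=\{1,3,4\}$ and $B'=\{3,4,5\}$. Then $IP(B)=\{3,4\}\subseteq\{3,4,5\}=IP(B')$, so $B<_{\text{int}}B'$; but $m_B=x_4^{2}$ while $m_{B'}=x_4x_5^{2}$, and $x_4^{2}\nmid x_4x_5^{2}$. This is precisely your ``awkward'' mixed case $n\notin B$, $n\in B'$: it is not merely awkward but unprovable. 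The paper's treatment of iii actually runs in the opposite direction: it shows that every degree-one divisor of $m_\lambda$ equals $m_\mu$ for some $\mu\subseteq\lambda$ with $IP(B_\mu)\subseteq IP(B_\lambda)$, i.e.\ it verifies ${\cal M}\subseteq\Gale$ and ${\cal M}\subseteq\Int$, and then asserts that i--iii hold. That argument does prove that ${\cal F}(\Psi)$ is a multicomplex whose divisibility relations lie inside $\Gale(\Psi)$, but it does not yield the literal inclusion $\Int\subseteq{\cal M}$ required by Conjecture~\ref{conj:main}(iii), and the counterexample above shows that for this particular ${\cal F}$ that inclusion simply fails.
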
 
\begin{proof} Recall that a rank-$d$ complex $\Psi = ([n], \Delta)$ is shifted if and only if $\Gale(\Psi)$ is an order ideal of Young's lattice of partitions contained in a $d\times (n-d)$ box. A basis of $\Psi$ with vertices $v_d>v_{d-1} > \dots >v_1$ corresponds to the partition $\lambda(B)  = \lambda_d\ge \lambda_{d-1} \ge \dots \ge \lambda_1$, where $\lambda_i = v_i -i$ (note that $\lambda_i$ might be equal to zero for some values of $i$). 

The goal is to construct a monomial $m_\lambda$ for each partition $\lambda:= \lambda(B)$. The following two properties are crucial in that process. 
\begin{compactitem}
\item $IA(B) = [m]$, where $m$ is the largest number such that $[m]\subseteq B$. This means that $\deg m_\lambda = |IP(B)| = \ell(\lambda)$, where $\ell(\lambda)$ denotes the length of $\lambda$. 
\item The set of variables of $m_\lambda$ are indexed by the set $B\backslash [d]$. In the partition such variables correspond to rows that intersect the main diagonal of the $d\times (n-d)$ box. It follows that the size of the support of $m_\lambda$ is equal to the side length of the Durfee square of $\lambda$, that is the maximal side length of a square formed by boxes that fits inside the Young diagram of the partition.  
\end{compactitem}

Notice that in small cases the construction is straightforward (see Figure~1). Let $\text{Dur}(\lambda)$ denote the partition obtained from $\lambda$ by removing the rows of the Durfee square. Assume that the side length of the Durfee square is $k$ and let $i_1<\dots <i_k$ be vertices of $B$ corresponding to the first $k$ rows. Now $\text{Dur}(\lambda)$ is a partition fitting in a $(d-k)\times k$ box. Hence by induction it has a monomial $x_{d-k+1}^{\alpha_{1}}x_{d-k+2}^{\alpha_2}\dots x_d^{\alpha_k}$ of degree $\ell(\lambda)-k$ (here some $\alpha_i$ may be zero). Let $m_\lambda := x_{i_1}^{\alpha_1+1}x_{i_2}^{\alpha_2 + 1} \dots x_{i_k}^{\alpha_k + 1}$. 

Let ${\cal F}(\Psi) =\{m_{\lambda(B)}\, | \,  $B$ \text{ is a basis of } \Psi \}$. We claim that ${\cal F}(\Psi)$ is the desired multicomplex. To show that this is a multicomplex, it is necessary to show that all the divisors of $m_\lambda$ are in ${\cal F}(\Psi)$ if $m_\lambda$ is. It suffices to show that all the divisors $m$ of $m_\lambda$ of degree one less than the degree of $m_\lambda$ are in ${\cal F}(\Psi)$. If the supports of $m$ and $m_\lambda$ are equal, then induction works directly passing to $\text{Dur}(\lambda)$. Otherwise let $\tilde m$ be the monomial resulting from dividing either $m$ or $m_\lambda$ by each one of its variables once (both choices give the same monomial). Let $e+k-1$ denote the degree of $m$, where $k-1$ is the number of variables of $m$. Our goal is to find $\mu$ such that $m = m_{\mu}$. The Durfee square of $\mu$ must have side length $k-1$. Let $j_1, \dots, j_{k-1}$ be the variables of $m$ and let $\tilde m = x_{j_1}^{\alpha_1} \dots x_{j_{k-1}}^{\alpha_{k-1}}$, with $\alpha_i \ge 0$ and $\sum \alpha_i = e$. By induction the monomial $x_{e+1}^{\alpha_1}\dots x_{e+k-1}^{\alpha_{k-1}}$ comes from a diagram $\tilde\mu \subseteq e\times (k-1)$. Thus we can construct $\mu$ by putting the first $k-1$ rows of $\mu$ in order to get the right support and putting $\tilde{\mu}$ below. It is straightforward to check that $\tilde \mu \subseteq \text{Dur}(\lambda)$: notice that, as diagrams contained in  the $e\times k$ box, the monomial associated to $\mu$ is $\tilde m$ and the monomial associated to $\text{Dur}(\lambda)$ is $x_{e+1}^{\alpha_1}\dots x_{e+j-1}^{\alpha_{j-1}}x_{e+j+1}^{\alpha_j} \dots x_{e+k}^{\alpha_{k+1}}$ for some $1\le j \le k-1$, thus $\text{Dur}(\mu)$ is obtained from $\tilde \mu$ by adding a box to the first $k-1-j$ rows. In this construction, it is also straightforward that $IP(B_{\mu}) \subseteq IP(B_{\lambda})$ by direct computation and the previous observation. 

Thus conditions $i.$, $ii.$ and $iii.$ of Conjecture 7.4 are satisfied. Condition $iv.$ is straighforward. 

There is a way to visualize $m_\lambda$ in the combinatorial structure of the Young diagram. The construction will be called the \emph{bouncing light construction}. Put mirrors on the vertical boundaries of the Young diagram of $\lambda$. The left-hand side mirrors reflect lines parallel to the $x$-axis in the direction of the diagonal and the right-hand side mirrors reflect lines coming in the direction of the diagonal to lines parallel to the $x$-axis. Put a light on the right-hand side of each row of the Durfee square and shoot the light parallel to the $x$-axis. For each $i_j$ let $\beta_j$ be the number of times that the light bounces off the left wall. Then a straightforward induction shows that $m_\lambda = x_{i_1}^{\beta_1}x_{i_2}^{\beta_2} \dots x_{i_k}^{\beta_k}$. The only thing we need to prove is that mirrors of all boxes are reached. This can be done by induction on $\ell(\lambda)$ by passing from $\lambda$ to $\text{Dur}(\lambda)$. 
\end{proof}

We end this section with two examples that illustrate the constructions. 
\begin{figure}\label{fig:young}
\begin{center}
\includegraphics[scale = 0.5]{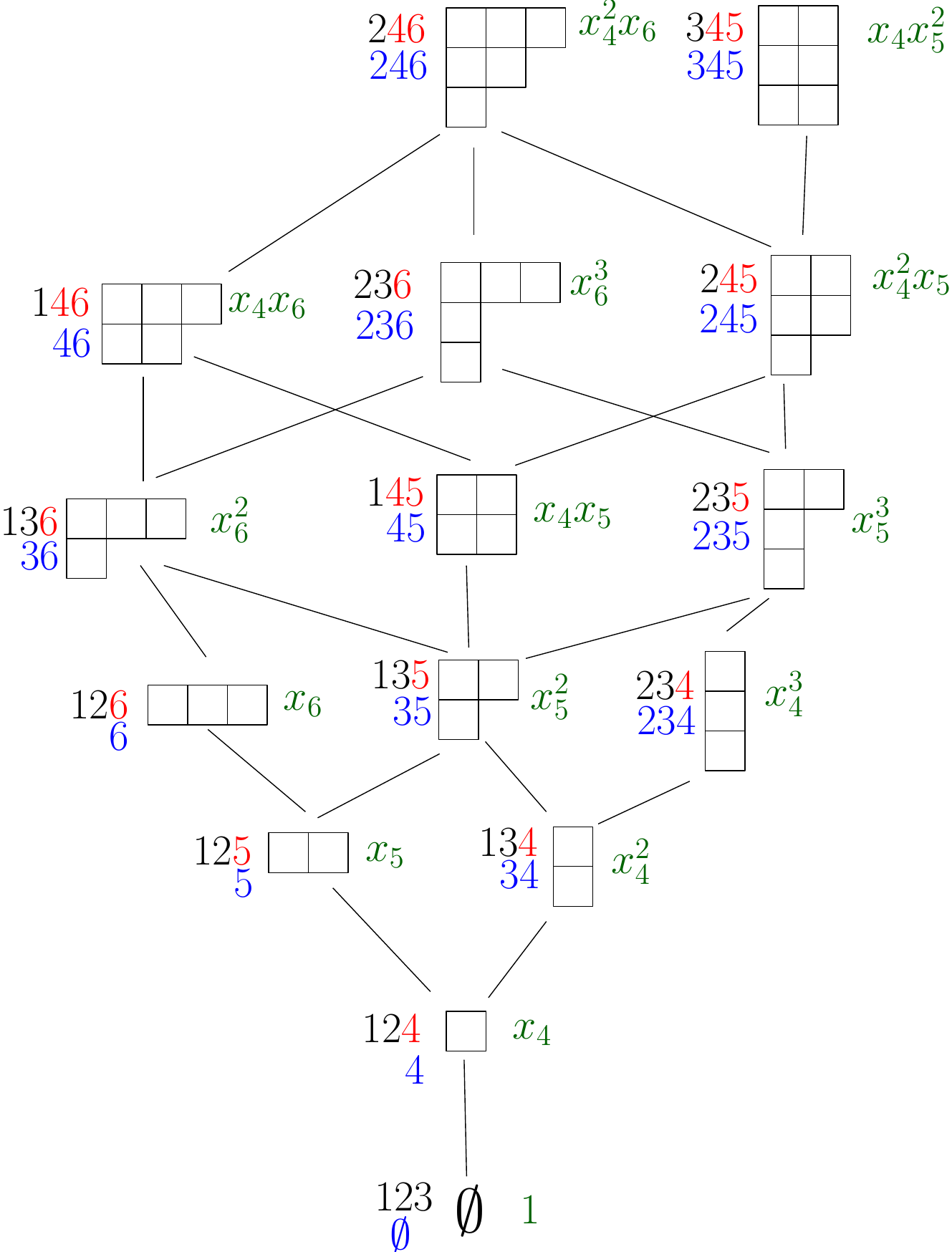}
\caption{An order ideal of the Young lattice}
\label{fig:Young}
\end{center}
\end{figure}
\begin{ex} Consider the order ideal of the Young lattice fitting into a $3\times 3$ box presented in Figure \ref{fig:Young}. Depicted in the figure are the Young diagrams. To the left of the diagram are listed the vertices of the corresponding facet with the variable vertices highlighted in red. Below the facets are the internally passive sets written in blue. To the right of the diagram is the corresponding monomial written in green. 
\end{ex}
\begin{figure}[htbp]
\begin{center}
\includegraphics[scale = 0.5]{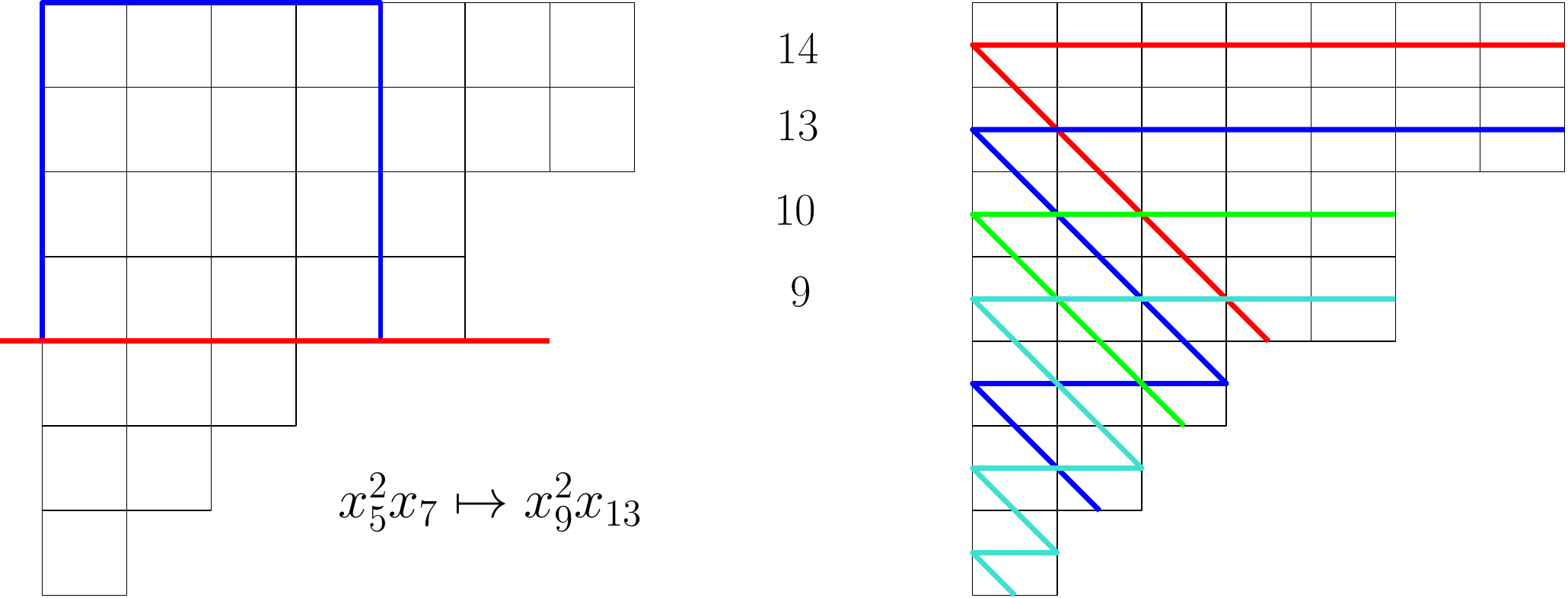}
\caption{Two ways to construct: inductively on the left and the bouncing light construction on the right.}
\label{fig:Young}
\end{center}
\end{figure}
\begin{ex} Consider the partition $(7,7,5,5,3,2,1)$. It fits into the $7\times 7$ box. It corresponds to the facet $\{14, 13, 10, 9,7,6,4\}$ in any complex it belongs to and the monomial associated is $x_{14}x^2_{13}x_{10}x^3_{9}$. The first four rows give the variables.\begin{compactitem} \item The left hand side of the figure shows the inductive construction. The blue square is the Durfee square and the top four rows give the answer. The remaining partition $(3,2,1)$ should be thought as fitting into the $3\times 4$ box. Hence it has a monomial in a subset of the variables $x_5, \, x_6, \, x_7, \, x_8$ and then we do an ordered substitution of variables: $x_5\mapsto x_9, \, x_6\mapsto x_{10}, \, x_7\mapsto x_{13}, \, x_8 \mapsto x_{14}$. 
\item The right hand side gives the construction of the bouncing light. The variables $x_{14},\,x_{13}, x_{10}, x_{9}$ correspond to colors {\color{red} red}, {\color{blue} blue}, {\color{green} green} and {\color{cyan} light blue} respectively. 
\end{compactitem}
\end{ex}

\section{Questions, remarks and future directions}
\subsection{Questions}
\begin{compactitem}
\item It would be interesting to find rich families of examples of complexes that are in one of QI, QE or QC that cannot be constructed from matroids and shifted complexes using standard operations. As shown in Theorem~\ref{thm:ex} such examples exist, but are typically found by ad hoc methods and, at present, there are no constructions of infinite families of examples. 
\item Notice that Theorem~\ref{thm:reduction} is slightly stronger than the analogue theorem proved in by Klee and the author in \cite{MR3339029}. In particular, the old proof required complexes with $2d$ elements in order to solve rank-$d$. This new reduction opens the door to the case of rank-$5$ matroids since it suffices to construct $\cal F$ for all rank-$5$ matroids with $9$ elements and those are fully classified in \cite{Mayhew-Royle}. Is there an algorithm similar to that by Klee and Samper that solves Conjecture~\ref{conj:main} for rank 5 matroids? 
\item Is it possible to use similar ideas to study oriented matroids. What is the correct definition of an oriented quasi-matroidal class of complexes? 
\end{compactitem}
\subsection{Remarks}
\begin{compactitem}
\item The class QE is related to the class of squarefree weakly polymatroidal ideals of Hibi and Kokubo \cite{MR2260118} that has been widely studied in commutative algebra. In particular, the Stanley Reisner ring of a complex in QE is weakly polymatroidal (after choosing the right conventions for the order). Mohammadi and Moradi \cite{MR2768496} showed that weakly polymatroidal ideals have linear quotients, which is an algebraic analogue for shellability. In fact, complexes that are weakly polymatroidal satisfy a similar axiom to the QE axiom, except that the element $b_1$ to be removed from a basis has to be the largest in the symmetric difference. Based on empirical information the flexibility of allowing various exchanges helps with constructions. However, we have not been able to find an example of a weakly polymatroidal complex that is not in QE. 

\item Other relaxations of matroid theory have been considered in the literature. In particular, Lenz \cite{MR3479578} considered collections of bases that satisfy what he calls the forward exchange axiom.

\end{compactitem}

{\footnotesize
\setstretch{0.95}
\bibliography{BIBLIO}
\bibliographystyle{myamsalpha}}
\end{document}